\documentclass{amsart}
\usepackage[utf8]{inputenc}

\usepackage{amssymb,amsmath,amsfonts,amsthm,mathrsfs,yhmath}
\usepackage{euler,palatino}
\usepackage{graphicx}
\usepackage{adjustbox}
\usepackage{mathrsfs}
\usepackage{mathtools}
\usepackage{enumitem}
\usepackage{tikz}
\usepackage[
top    = 1.00in,
bottom = 1.00in,
left   = 1.00in,
right  = 1.00in]{geometry}
\usepackage{tikz-cd}
\usepackage{ stmaryrd }
\usetikzlibrary{decorations.pathreplacing,decorations.markings}
\usetikzlibrary{arrows}

\tikzset{
commutative diagrams/.cd,
arrow style=tikz,
diagrams={>=latex}}

\numberwithin{equation}{section}

%
\newcounter{lecnum}

%
%
\newcommand{\lecture}[3]{
   \pagestyle{myheadings} \thispagestyle{plain}
   \newpage \setcounter{lecnum}{#1}
   \setcounter{page}{1}
   \noindent
   \begin{center}
   \framebox{
      \vbox{\vspace{2mm}
    \hbox to 6.58in { {\bf Math 738~Extremal Combinatorics
                        \hfill Carnegie Mellon University} }
    \hbox to 6.58in { {\bf Spring 2024} \hfill}
       \vspace{4mm}
       \hbox to 6.6in { {\Large \hfill Homework #1  \hfill} }
       \vspace{2mm}
       \hbox to 6.6in {  \hfill #2 }
      \vspace{2mm}}
   }
   \end{center}
   \markboth{Assignment #1: #2}{Assignment #1: #2}
   \vspace*{4mm}
}

\newcommand{\final}[2]{
   \pagestyle{myheadings} \thispagestyle{plain}
   \setcounter{page}{1}
   \noindent
   \begin{center}
   \framebox{
      \vbox{\vspace{2mm}
    \hbox to 6.58in { {\bf Math 603~Model Theory I
                        \hfill Carnegie Mellon University} }
    \hbox to 6.58in { {\bf Fall 2021} \hfill}
       \vspace{4mm}
       \hbox to 6.6in { {\Large \hfill Final  \hfill} }
       \vspace{2mm}
       \hbox to 6.6in {  \hfill #1 }
      \vspace{2mm}}
   }
   \end{center}
   \markboth{Final: #1}{Final: #1}
   \vspace*{4mm}
}

\newtheorem{theorem}{Theorem}[section]
\newtheorem{lemma}[theorem]{Lemma}
\newtheorem{proposition}[theorem]{Proposition}
\newtheorem{claim}[theorem]{Claim}
\newtheorem{corollary}[theorem]{Corollary}
\newtheorem{question}[theorem]{Question}
\newtheorem{fact}[theorem]{Fact}

\theoremstyle{definition}
\newtheorem{definition}[theorem]{Definition}
\newtheorem{remark}[theorem]{Remark}
\newtheorem{example}[theorem]{Example}


\newcommand{\cU}{\mathcal{U}}
\newcommand{\Fcal}{\mathcal{F}}

\newcommand{\QQ}{\mathbb Q}

\newcommand{\ZZ}{\mathbb Z}

\makeatletter
\newcommand*\bigcdot{\mathpalette\bigcdot@{.5}}
\newcommand*\bigcdot@[2]{\mathbin{\vcenter{\hbox{\scalebox{#2}{$\m@th#1\bullet$}}}}}
\makeatother

\begin{document}
\author[Bannister]{Nathaniel Bannister\\Carnegie Mellon University}

\title{Locales as spaces in outer models}
\begin{abstract}
    Let M be a transitive model of set theory and X be a space in the sense of M. Is there a reasonable way to interpret X as a space in V? A general theory due to Zapletal (\cite{Zap}) provides a natural candidate which behaves well on sufficiently complete spaces (for instance \v{C}ech complete spaces) but behaves poorly on more general spaces - for instance, the Zapletal interpretation does not commute with products. We extend Zapletal's framework to instead interpret locales, a generalization of topological spaces which focuses on the structure of open sets. Our extension has a number of desirable properties; for instance, localic products always interpret as spatial products. 
    We show that a number of localic notions coincide exactly with properties of their interpretations; for instance, we show a locale is $T_U$ if and only if all its interpretations are $T_1$, a locale is $I$-Hausdorff if and only if all its interpretations are $T_2$, a locale is regular if and only if all its interpretations are $T_3$, and a locale is compact if and only if all its interpretations are compact.
\end{abstract}

\maketitle

Given a model $M$ of ZFC and some $X$ a topological space of $M$, we would like to create some canonical interpretation of $X$ as a space with similar properties in some outer model of $M$. 
For sufficiently nice spaces (i.e. regular Hausdorff continuous open images of \v Cech-complete spaces), Zapletal defines a nice interpretation in \cite{Zap} and in the special case of Hausdorff spaces in \cite{Frem}; the main goal of this paper is to give a definition that holds for arbitrary spaces and locales and show that localic properties in the ground model transmit faithfully into properties of spaces in forcing extensions. 

Our interpretation factors through the context of frames and locales, of which we review the basic definitions and properties in section \ref{Frm_loc_section}. 
Intuitively, locales are an abstraction of topological spaces that focus purely on the open sets rather than points and include complete Boolean algebras, which generally have no points at all! 
Instead, locales are defined as the opposite category of algebraic objects called frames which behave like the open subsets of a space.
However, after some forcing a locale will have points, enough so that the structure of a locale transmits to the structure of a basis in the forcing extension and that distinct maps between locales interpret to distinct maps of spaces. 

Our extension from spaces to locales has a number of nice benefits. 
For instance, interpretation on the level of spaces does not preserve products; there are several nice examples of this in \cite[Section 7]{Zap} including both the rationals and the Sorgenfrey line. 
However, the interpretation of a product locale is always the product of spaces; in particular, failure for interpretation to commute with product of spaces can be traced exactly to the localic product being in general bigger than the spatial product. 
In particular, the interpretable spaces of \cite{Zap} are simply a nice class of spaces where several spatial notions coincide exactly with their localic counterparts. 
In combination with the examples of \cite[Section 7]{Zap}, we obtain interesting corollaries in the form of a plethora of examples of spatial locales whose products are not spatial. 
For instance, we obtain new forcing based proofs that for $X$ either $\mathbb{Q}$ or the Sorgenfrey line ${\Omega}(X)\oplus {\Omega}(X)$ is not spatial, the uncountable locale product $\bigoplus_{i<\omega_1}{\Omega}(\omega)$ is not spatial, and for $X$ the space of wellfounded trees on $\omega$, ${\Omega}(X)\oplus {\Omega}(\omega^\omega)$ is not spatial. 
The first of these examples appears as \cite[II.2.14]{Johnstone} but we have not been able to find mention of the other two examples. 

An interesting feature of our interpretation is that Hausdorff topological spaces \emph{do not} necessarily interpret to Hausdorff spaces in a forcing extension, nor even necessarily to $T_1$ spaces. 
Rather, the separation properties $T_1$ and $T_2$ holding in all forcing extensions correspond to localic strengthenings of the classical notions; namely, to the notions of $T_U$ and Isbell Hausdorffness respectively.
In contrast, the separation property $T_3$ translates directly to the analogous separation property in all forcing extensions. 

The structure of the paper is as follows. 
In section \ref{Frm_loc_section}, we review the basic properties of frames and locales. 
In section \ref{interp_section}, we define interpretation of locales as spaces in outer models and prove the existence and uniqueness of interpretations. 
In section \ref{force_section}, we analyze interpretation in the special case of interpreting a locale as a space in a forcing extension, where we gain a number of useful tools. 
Section \ref{glob_section} gives some basic properties that a space defined in all forcing extensions should satisfy and our definition of the interpretation of a locale does. 
We show that interpretation gives a full and faithful functor to a suitable category of such spaces.

The focus of section \ref{comp_section} is explicit examples; we compute the interpretation of complete metric spaces, compact Hausdorff spaces and, as a non-Hausdorff example, the spectrum of a commutative ring with $1$. 
In section \ref{sep_section}, we show that the separation properties $T_1,T_2$, and $T_3$ for interpretations in all forcing extensions correspond precisely with the notions of $T_U$, Isbell Hausdorfness, and regularity respectively. 
Though several of our proofs at first rely on forcing, some heavy handed use of Mostowski and Shoenfield absoluteness allows us to generalize the results from forcing extensions to all outer models, at least if we start with a proper class model of set theory. 
In section \ref{More_prop_section}, we note a few more properties of locales that are equivalent to properties of spaces in forcing extensions: for instance, a locale is compact if and only if all its interpretations are compact, a locale is $p$-connected if and only if it is connected in extensions that collapse a sufficiently large cardinal.
We show open monos of locales give rise to open embeddings, at least if the domain is sufficiently separated, and conclude with some basic analysis of localic groups, giving an interesting example of a generally nonspatial localic abelian group for every pair $A,B$ of discrete abelian groups which interprets to $\hom(A,B)$ with the compact open topology. 

Throughout, we assume general familiarity with set theoretic forcing; a good reference is \cite{Kunen}. 
We will often (sometimes without saying as much) invoke the equivalence between forcing with arbitrary posets and forcing with complete Boolean algebras. 
We assume less familiarity with locales and recall the relevant notions as they arise. 

\section{Frames and Locales} \label{Frm_loc_section}
We here recall the basic definitions of frames and locales that will be relevant to our later constructions. 
Some good references are \cite{FrmLoc} and \cite{Johnstone}.
\begin{definition}
    A \emph{frame} is a poset with all joins and all finite meets which satisfies the \emph{infinite distributive law}:
    \[a\wedge\left(\bigvee_{i\in I}b_i\right)=\bigvee_{i\in I}(a\wedge b_i).\]
    A frame map $f\colon L\to L'$ is an order preserving function that preserves all joins and all finite meets. 

    The category of locales is, by definition, the opposite of the category of frames. 
    That is, a locale is a frame and a map of locales $f\colon L\to L'$ is a frame map from $L'$ to $L$. 
\end{definition}
For a space $X$, the frame $\mathcal{O}X$ consists of all open subsets of $X$ ordered by containment; when written as a locale we use the notation $\Omega X$. 
A locale isomorphic to $\Omega X$ for some $X$ is called \emph{spatial}. 
If $f\colon X\to Y$ is a continuous function then $f$ induces a frame map $\mathcal{O}f\colon \mathcal{O}Y\to\mathcal{O}X$ - and consequently a locale map $\Omega f\colon\Omega X\to\Omega Y$ - given by taking the preimage of any open set.
However, there are more frames; for instance, any complete Boolean algebra (or more generally complete Heyting algebra) is a frame. 
Any frame can naturally be given the structure of a complete Heyting algebra; the Heyting implication is defined by $a\to b=\bigvee\{c\mid c\wedge a\leq b\}$ and arbitrary meets exist and can be expressed as $\bigwedge_{i\in I}a_i=\bigvee\{b\mid\forall i(b\leq a_i)\}$. 
However, frame maps do not generally preserve either the Heyting implication nor arbitrary meets. 
Maps that preserve both arbitrary meets and the Heyting implication are called \emph{open}; when $X$ and $Y$ are spaces and $Y$ is $T_1$, it is not too hard to show that $f\colon X\to Y$ is an open map if and only if $\Omega f\colon\Omega X\to \Omega Y$ is an open map of locales, though this can fail if $Y$ is not $T_1$. 

We note that if $\mathbb{A}$ and $\mathbb{B}$ are complete Boolean algebras and $\dot H$ is an $\mathbb{A}$ name for a $\mathbb{B}$ generic filter, then $\dot H$ induces a locale map $f_{\dot H}\colon\mathbb{A}\to\mathbb{B}$. 
Viewed as a frame map, $f_{\dot H}(b)=\llbracket b\in\dot H\rrbracket$; that is, $b$ maps to the largest condition forcing that $b$ is in $\dot H$. 
Moreover, this is all the locale maps from $\mathbb{A}$ to $\mathbb{B}$. 
Consequently, the category of locales has a piece that looks like topological spaces and continuous functions and another that looks like forcings and names for generic filters. 
Therefore, it should perhaps be little surprise that locales behave better than spaces when interacting with forcing. 

The terminal locale is denoted by $1$ and consists of exactly two points, which we denote as $0,1$. 
For a locale $L$, a \emph{point} of $L$ is a locale map $x\colon 1\to L$; viewed as a map of frames, a point of $L$ consists of a map $x\colon L\to \{0,1\}$ such that $x(1)=1$, $x(0)=0$, the preimage of $1$ is closed upwards and under finite meets, and whenever $x(\bigvee A)=1$ there is some $a\in A$ such that $x(a)=1$. 
A set of the form $x^{-1}(1)$ for some locale map $x\colon 1\to L$ is called a \emph{completely prime filter}. 
The space of completely prime filters $\Sigma L$ is called the \emph{spectrum} of $L$ and is topologized by declaring that for each $\ell\in L$, $\{\mathcal{F}\mid\ell\in\mathcal{F}\}$ is open. 
If $f\colon L\to L'$ is a locale map then $f$ induces a continuous function $\Sigma f\colon \Sigma L\to \Sigma L'$ obtained by taking the preimage of a completely prime filter under the frame map corresponding to $f$. 

The spectrum functor $\Sigma$ and the locale of open sets functor $\Omega$ are a pair of adjoint functors, with $\Sigma$ the left adjoint to $\Omega$. 
A space $X$ with $\Sigma\Omega X\cong X$ is called \emph{sober}. 
As a separation property, sobriety is strictly stronger than $T_0$, strictly weaker than $T_2$, and incomparable with $T_1$.
While information is often preserved in passing from spaces to locales, passage from locales to spaces tends to destroy a significant amount of information. 
For instance, the spectrum of a complete atomless Boolean algebra is empty. 
Indeed, a completely prime filter on a complete Boolean algebra would be a generic filter, which cannot exist in the ground model. 

We now define the relevant analogues of separation properties for locales. 
The analogue of the axiom $T_1$ that we will need is referred to as axiom $T_U$ in, for instance, \cite[III.1.5]{Johnstone} (short for totally unordered). 
Intuitively, the definition asserts that if $f$ and $g$ are continuous functions into $L$ such that $f(x)\in\overline{g(x)}$ for every point $x$ then $f=g$. 
\begin{definition}
    A locale $L$ is $T_U$ if for any locale maps $f,g\colon M\to L$, if $f\leq g$ pointwise when viewed as frame maps then $f=g$. 
\end{definition}

For our analysis of Hausdorffness and for connectedness, we will need the product of locales. 
\begin{definition} \label{prod_def}
    For $L,L'$ locales, we denote $L\oplus L'$ as the locale consisting of all downwards closed $A\subseteq L\times L'$ such that
    \begin{itemize}
        \item whenever $\{a_i\mid i\in I\}$ are in $L$ and $b\in L'$ such that $(a_i,b)\in A$ for each $i\in I$, we have $(\bigvee_ia_i,b)\in A$;
        \item whenever $a\in L$ and $\{b_i\mid i\in I\}$ are in $L'$ and $a\in L$ such that $(a,b_i)\in A$ for each $i\in I$, we have $(a,\bigvee_ib_i)\in A$. 
    \end{itemize}
    The locale map $\pi_1\colon L\oplus L'\to L$ is given on the level of frames as taking $\ell\in L$ to the smallest element of $L\oplus L'$ containing $(\ell,1)$. 
Similarly, we may define $\pi_2\colon L\oplus L'\to L$. 
\end{definition}
$L\oplus L'$ is the coproduct of $L$ and $L'$ in the category of frames (equivalently, the product in the category of locales); see \cite[Section IV.4]{FrmLoc}. 

We will later need the notions of a locale being \emph{Hausdorff} and being \emph{regular}. 
The most convenient definition of being Hausdorff for our purposes is the following. 
The idea is that a $T_0$ space $X$ is Hausdorff if and only if, letting $\Delta\subseteq X\times X$ be the diagonal, every open superset of $(X\times X)\setminus\overline{\Delta}$ is of the form $\bigcup\{U\times V\mid U\cap V\subseteq W\}$ for some open $W\subseteq X$. 

\begin{definition}
    A locale $L$ is {\em Isbell Hausdorff} or {\em $I$-Hausdorff} if whenever $U\in L\oplus L$ is such that $\{(\ell,\ell')\mid \ell\wedge \ell'=\varnothing\}\subseteq U$ there is a unique $\ell\in L$ such that $U=\{(a,b)\mid a\wedge b\leq \ell\}$. 
\end{definition}
The main difference between a space $X$ being Hausdorff and the locale $\Omega X$ being $I$-Hausdorff is that we use the localic product, which is generally larger than the spatial product- as a separation property, $\Omega X$ being $I$-Hausdorff is strictly between $X$ being Hausdorff and $X$ being $T_3$. 
An alternative useful characterization is given by \cite[Proposition 2.3]{FrmLoc}:
\begin{fact}
    $L$ is $I$-Hausdorff if and only if whenever $U\in L\oplus L$ is such that $\{(\ell,\ell')\mid \ell\wedge \ell'=\varnothing\}\subseteq U$ and $(a\wedge b,a\wedge b)\in U$, we have $(a,b)\in U$. 
\end{fact}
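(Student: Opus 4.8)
The plan is to prove the two implications separately; one is immediate from the definition, and the whole content sits in a single closure computation inside $L\oplus L$. Throughout, I would write $D_\ell:=\{(a,b)\mid a\wedge b\le\ell\}$; a routine check (downward closure together with the infinite distributive law applied in each coordinate) shows $D_\ell\in L\oplus L$, and since $\varnothing\le\ell$ always, each $D_\ell$ contains the off-diagonal $\{(\ell,\ell')\mid\ell\wedge\ell'=\varnothing\}$. For the forward direction, assume $L$ is $I$-Hausdorff and let $U$ contain the off-diagonal. By definition $U=D_\ell$ for some $\ell$, and then the hypothesis $(a\wedge b,a\wedge b)\in U$ reads $a\wedge b\le\ell$, which is exactly the membership condition for $(a,b)\in D_\ell=U$. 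So the stated implication holds, and this direction costs nothing beyond unwinding notation.

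For the converse I would first note that the uniqueness demanded by $I$-Hausdorffness is automatic: if $U=D_\ell=D_{\ell'}$ then $(\ell,\ell)\in U$ forces $\ell\le\ell'$ and symmetrically $\ell'\le\ell$. Hence it suffices to produce, for each off-diagonal-containing $U$ satisfying the diagonal hypothesis, some $\ell$ with $U=D_\ell$. I would set $S:=\{c\mid(c,c)\in U\}$ and $\ell:=\bigvee S$. The inclusion $U\subseteq D_\ell$ is then easy: if $(a,b)\in U$ then $(a\wedge b,a\wedge b)\le(a,b)$, so downward closure gives $a\wedge b\in S$ and hence $a\wedge b\le\ell$.

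The crux is the reverse inclusion $D_\ell\subseteq U$, and the key step is to verify $(\ell,\ell)\in U$. This is where the diagonal hypothesis does its work: for each $c\in S$ we have $c\wedge\ell=c$, so $(c\wedge\ell,c\wedge\ell)=(c,c)\in U$, and applying the hypothesis to the pair $(c,\ell)$ yields $(c,\ell)\in U$. Now the join-closure of $U$ in the first coordinate, with the second coordinate held fixed at $\ell$, collapses the family $\{(c,\ell)\mid c\in S\}$ to $(\bigvee_{c\in S}c,\ell)=(\ell,\ell)\in U$. Thus $\ell\in S$, and since $S$ is downward closed (by downward closure of $U$) we obtain $S=\{c\mid c\le\ell\}$. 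Finally, if $a\wedge b\le\ell$ then $a\wedge b\in S$, so $(a\wedge b,a\wedge b)\in U$, and one more application of the diagonal hypothesis gives $(a,b)\in U$, completing $D_\ell\subseteq U$.

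I expect the single nonroutine point to be the identity $(\ell,\ell)\in U$. The naive attempt is to apply join-closure directly to the diagonal elements $(c,c)$, but the closure conditions of Definition \ref{prod_def} only permit enlarging one coordinate at a time, so on their own they cannot move along the diagonal. The diagonal hypothesis is exactly the device that first spreads each $(c,c)$ out to $(c,\ell)$, after which a single coordinatewise join closes up to $(\ell,\ell)$; recognizing this is the one genuinely nonformal idea in the argument, and everything else reduces to bookkeeping about downward closure.
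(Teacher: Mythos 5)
Your proof is correct. Note that the paper itself gives no proof of this Fact: it is quoted directly as Proposition 2.3 of \cite{FrmLoc}, so your argument supplies what the paper delegates to the reference, and it is essentially the standard one. All the steps check: $D_\ell=\{(a,b)\mid a\wedge b\le\ell\}$ lies in $L\oplus L$ by the infinite distributive law applied in each coordinate; the forward direction is indeed just unwinding notation; uniqueness follows from $(\ell,\ell)\in D_{\ell'}$ forcing $\ell\le\ell'$; and in the existence argument the crucial point $(\ell,\ell)\in U$ goes through exactly as you say, since $c\wedge\ell=c$ for $c\in S$ lets the diagonal hypothesis upgrade $(c,c)$ to $(c,\ell)$, after which a single application of the one-coordinate join closure of Definition \ref{prod_def} yields $\bigl(\bigvee S,\ell\bigr)=(\ell,\ell)\in U$, and one more use of the hypothesis gives $D_\ell\subseteq U$. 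Your closing diagnosis—that the closure conditions of $L\oplus L$ can only enlarge one coordinate at a time, and the stated condition is precisely the device that permits movement along the diagonal—is exactly the right way to understand why this characterization is equivalent to $I$-Hausdorffness rather than a triviality.
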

The idea for the localic analogue of $T_3$ is that a $T_0$ space is $T_3$ if and only if for each open set $U$ and point $x\in U$ there is an open $W$ with $x\in W\subseteq\overline{W}\subseteq U$. 
\begin{definition}
    For $\ell,\ell'\in L$, we write $\ell'\prec\ell$ and say $\ell'$ is well below $\ell$ if there is a $\ell''$ such that $\ell'\wedge\ell''=0$ and $\ell\vee\ell''=1$. 
A locale $L$ is \emph{regular} if for every $\ell\in L$, $\ell=\bigvee\{\ell'\mid \ell'\prec\ell\}$. 
\end{definition}
Unlike the axioms of $T_U$ and $I$-Hausdorff, regularity is a direct generalization of the spatial notion.

\section{Defining interpretation} \label{interp_section}
We give the following strengthening of the requirements of \cite[Definition 1.1]{Zap} for what an interpretation must satisfy; our strengthening makes uniqueness much easier to establish in general. 
\begin{definition}
    Suppose $M\models L$ is a frame. 
    A \emph{topological preinterpretation} of $L$ (over $M$) is a space $X$ in $V$ together with a map $\pi\colon L\to \mathcal{O}X$ such that 
    \begin{itemize}
        \item $\pi(1)=X$, $\pi(0)=\varnothing$;
        \item for all $\cU\in \mathcal{P}(L)\cap M$, $\pi(\bigvee\cU)=\bigcup_{\ell\in\cU}\pi(\ell)$;
        \item for all $\ell,\ell'\in L$, $\pi(\ell\wedge \ell')=\pi(\ell)\cap \pi(\ell')$. 
    \end{itemize}
    A preinterpretation $(X,\pi)$ of $L$ is the \emph{topological interpretation} of $L$ if for all topological preinterpretations $(Y,\rho)$ there exists a unique continuous $f\colon Y\to X$ such that the following diagram commutes: 
    \begin{center}
\begin{tikzcd}
\mathcal{O}X \arrow[rr, "\mathcal{O}f"] &                                        & \mathcal{O}Y \\
                                  & L \arrow[ru, "\rho"] \arrow[lu, "\pi"] &             
\end{tikzcd}
    \end{center}
\end{definition}
Note that the use of the article the is justified in defining the topological interpretation of $L$ since if $(X,\pi)$ and $(Y,\rho)$ are both interpretations of $L$ then the universal $f\colon X\to Y$ is necessarily a homeomorphism. 
Our first goal is to show that the topological interpretation of any frame always exists.

\begin{definition}
    Suppose $M\models L$ is a frame. 
    We define $\widehat{L}$ as the space with 
    \begin{itemize}
        \item points given by filters $\mathcal{F}$ on $L$ such that whenever $U\in\Fcal$ and $\cU\in M\cap\mathcal{P}(L)$ satisfies $\bigvee\cU=U$, $\Fcal\cap\cU\neq\varnothing$. 
        We say such a filter is \emph{completely prime} over $M$.
        \item topology generated by, for $\ell\in L$, declaring $\widehat{\ell}=\{\Fcal\mid \ell\in \Fcal\}$ to be open. 
    \end{itemize}
    If $f\colon L\to L'$ is a map of locales in $M$, we define $\widehat{f}\colon\widehat{L}\to\widehat{L'}$ by $\widehat{f}(\mathcal{F})=f^{-1}\mathcal{F}$. 
\end{definition}
We note that the construction of $\widehat{L}$ bears a striking resemblance to the construction of points of a frame except that we preserve only those joins that exist in $M$ - and these are the only joins that necessarily exist regardless. 
\begin{remark}
    We could dually define the points of $\widehat{L}$ as completely prime ideals over $M$; that is, ideals $I\subseteq L$ such that if $a\wedge b\in I$ then $a\in I$ or $b\in I$ and if $\cU\subseteq I$ and $\cU\in M$ then $\bigvee\cU\in I$. 
    We choose this presentation for the similarity with the generic filters of forcing. 
    Indeed, if $\mathbb{B}$ is a complete Boolean algebra in the sense of $M$ then $\widehat{\mathbb{B}}$ is the space of $M$-generic filters on $\mathbb{B}$ as a subspace of the Stone space of $\mathbb{B}$. 
\end{remark}

In contrast to the classical setting where many interesting locales have no points whatsoever, we note that $\widehat{L}$ will have a point in some forcing extension, for instance by forcing with $L$ itself. 
We will soon see that in some forcing extension, the structure of $L$ is interpreted faithfully as the structure of a basis of open sets and that distinct maps between locales are interpreted as distinct functions; see Proposition \ref{l_struct}. 

\begin{proposition} \label{locale_interp}
    $\widehat{L}$ with the function $\pi(\ell)=\widehat{\ell}$ is the topological interpretation of $L$. 
\end{proposition}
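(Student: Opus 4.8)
The plan is to verify two things: first, that $(\widehat L,\pi)$ is itself a topological preinterpretation, and second, that it satisfies the required universal property. Both come down to the observation that complete primeness over $M$ is exactly the dual of the $M$-join axiom for preinterpretations, so the definition of $\widehat L$ is engineered precisely to make the argument go through.

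First I would check the three preinterpretation axioms for $\pi(\ell)=\widehat\ell$. The identities $\pi(1)=\widehat L$ and $\pi(\ell\wedge\ell')=\widehat\ell\cap\widehat{\ell'}$ are immediate, since each point is an upward-closed filter that is closed under finite meets. For $\pi(0)=\varnothing$ I would note that $\varnothing\in M$ and $\bigvee\varnothing=0$, so complete primeness (applied with $\cU=\varnothing$) forbids $0$ from lying in any point. The only axiom with real content is join preservation: for $\cU\in\mathcal{P}(L)\cap M$ the inclusion $\bigcup_{\ell\in\cU}\widehat\ell\subseteq\widehat{\bigvee\cU}$ is just upward closure, while the reverse inclusion is precisely the complete primeness condition applied to $U=\bigvee\cU$. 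Along the way I would also record that the sets $\widehat\ell$ form a basis, since $\widehat\ell\cap\widehat{\ell'}=\widehat{\ell\wedge\ell'}$; this is what lets me test continuity on basic opens below.

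For the universal property, given a preinterpretation $(Y,\rho)$ I would define $f\colon Y\to\widehat L$ by $f(y)=\{\ell\in L\mid y\in\rho(\ell)\}$. The bulk of the work is showing $f(y)$ is a legitimate point, i.e. a completely prime filter over $M$. Monotonicity of $\rho$, which follows from meet preservation since $\ell\le\ell'$ gives $\rho(\ell)=\rho(\ell)\cap\rho(\ell')$, yields upward closure; meet preservation yields closure under finite meets; and the normalizations $\rho(1)=Y$, $\rho(0)=\varnothing$ make $f(y)$ a proper filter. Complete primeness over $M$ is where the join axiom for $\rho$ enters: if $\bigvee\cU\in f(y)$ with $\cU\in M\cap\mathcal{P}(L)$, then $y\in\rho(\bigvee\cU)=\bigcup_{\ell\in\cU}\rho(\ell)$, producing some $\ell\in\cU\cap f(y)$.

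Finally I would observe that the commuting diagram and continuity of $f$ both fall out of the single computation $f^{-1}(\widehat\ell)=\{y\mid\ell\in f(y)\}=\rho(\ell)$, which is open in $Y$; since the $\widehat\ell$ are a basis this gives continuity, and it is exactly the assertion $\mathcal{O}f\circ\pi=\rho$. For uniqueness the same computation pins $f$ down pointwise: any continuous $g$ making the diagram commute satisfies $\ell\in g(y)\iff y\in g^{-1}(\widehat\ell)=\rho(\ell)\iff\ell\in f(y)$, so $g(y)=f(y)$ as subsets of $L$. The only real obstacle is the bookkeeping in verifying that $f(y)$ is completely prime over $M$; the conceptual content is simply that the join axiom for $\rho$ translates verbatim into complete primeness of $f(y)$.
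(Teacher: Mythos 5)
Your proof is correct and follows essentially the same route as the paper: the same map $f(y)=\{\ell\in L\mid y\in\rho(\ell)\}$, continuity and commutativity via the single identity $f^{-1}(\widehat\ell)=\rho(\ell)$, and the same pointwise computation for uniqueness. The only difference is that you spell out details the paper leaves implicit (that $(\widehat L,\pi)$ is itself a preinterpretation and that $f(y)$ is completely prime over $M$), which is a fair elaboration rather than a different argument.
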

\begin{proof}
    Suppose $(Y,\rho)$ is a preinterpretation of $L$. 
    Let $f\colon Y\to\widehat{L}$ be defined by $f(x)=\{\ell\in L\mid x\in \rho(\ell)\}$. 
    Note that $f(x)$ is indeed a completely prime filter and that $f^{-1}\widehat{\ell}=\rho(\ell)$ so that $f$ is continuous as the preimage of every basic open set is open and $\rho=f^{-1}\circ\pi$. 
    Moreover, if $g\colon Y\to\widehat{L}$ is any continuous function such that $g^{-1}\circ\pi=\rho$ then for each $x\in Y$, 
    \[\begin{aligned}
        g(x)&=\{\ell\mid g(x)\in\widehat{\ell}\}\\
        &=\{\ell\mid x\in g^{-1}\widehat{\ell}\}\\
        &=\{\ell\mid x\in f^{-1}\widehat{\ell}\}\\
        &=f(x)
    \end{aligned}\]
    so $f=g$. 
\end{proof}
\begin{remark} \label{Borel}
    The definition of a completely prime filter over $M$ requires only quantification over elements of $L$ and $\mathcal{P}(L)\cap M$. 
    In particular, in any universe where $\mathcal{P}(L)\cap M$ is countable, the space of completely prime filters over $L$ is a Borel set in $\mathcal{P}(L)$ of fairly low complexity. 
    We will leverage this fact in later sections to generalize results in forcing extensions to results about arbitrary $M$.
\end{remark}

Note that we also obtain an interpretation of a space as in the following definition, which is a slight stengthening of \cite[Definition 1.1]{Zap}:
\begin{definition}
    Suppose $M\models X$ is a topological space. 
    A \emph{topological preinterpretation} of $X$ (over $M$) consists of a topological space $Y$ with a pair of functions $\pi_X\colon X\to Y$ and $\pi_{\mathcal{O}X}\colon \mathcal{O}X\to\mathcal{O}Y$ such that 
    \begin{itemize}
        \item $\pi_{\mathcal{O}X}(\varnothing)=\varnothing$ and $\pi_{\mathcal{O}X}(X)=Y$;
        \item for every $x\in X$ and open set $O\in\mathcal{O}X$, $x\in O$ if and only if $\pi_X(x)\in \pi_{\mathcal{O}X}(O)$;
        \item for every $\cU\in M$ with $\cU\subseteq\mathcal{O}X$, $\pi_{\mathcal{O}X}(\bigcup\cU)=\bigcup_{W\in\cU}\pi_{\mathcal{O}X}(W)$ and if $U,W\in\mathcal{O}X$ then $\pi_{\mathcal{O}X}(U\cap W)=\pi_{\mathcal{O}X}(U)\cap\pi_{\mathcal{O}X}(W)$.
    \end{itemize}
    A topological preinterpretation $(Y,\pi_X,\pi_{\mathcal{O}X})$ of $X$ is the \emph{topological interpretation} of $X$ if for any topological preinterpretation $(Z,\rho_X,\rho_{\mathcal{O}X})$ of $X$ there is a unique continuous $f\colon Z\to Y$ such that $\rho_X=f\circ \pi_X$ and $f^{-1}\pi_{\mathcal{O}X}(U)=\rho_{\mathcal{O}X}(U)$ for any $U\in\mathcal{O}X$. 
\end{definition}
Once again, with this definition the topological interpretation is unique up to a unique structure preserving homeomorphism. 
The point is the analogue of Proposition \ref{locale_interp} also holds for topological interpretations of spaces. 
\begin{proposition}
    For any space $X$, $\widehat{\mathcal{O}X}$ with $\pi_X$ mapping $x\in X$ to the principal filter at $x$ and $\pi_{\mathcal{O}X}(U)=\widehat{U}$ is the topological interpretation of $X$. 
\end{proposition}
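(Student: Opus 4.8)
The plan is to reduce everything to the already-established locale case, Proposition \ref{locale_interp}, by observing that the open-set data of a space preinterpretation is precisely a frame preinterpretation of $\mathcal{O}X$, while the point data is redundant---it is completely determined by the open-set data via the incidence axiom. Concretely, I would first check that $(\widehat{\mathcal{O}X},\pi_X,\pi_{\mathcal{O}X})$ really is a topological preinterpretation of $X$. The only nonformal point is that $\pi_X(x)$, the filter of open sets containing $x$, is a completely prime filter over $M$: if $x\in U=\bigvee\cU$ for some $\cU\in M\cap\mathcal{P}(\mathcal{O}X)$, then since joins in $\mathcal{O}X$ are unions, $x$ lies in some $W\in\cU$, so $\pi_X(x)\cap\cU\neq\varnothing$; that $\pi_X(x)$ is a proper filter closed under finite meets is trivial. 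The incidence axiom $x\in O\iff \pi_X(x)\in\widehat O$ and preservation of $M$-joins and of finite meets are immediate from the definitions.

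Next, given an arbitrary preinterpretation $(Z,\rho_X,\rho_{\mathcal{O}X})$ of $X$, I would note that $(Z,\rho_{\mathcal{O}X})$ satisfies exactly the three clauses defining a topological preinterpretation of the \emph{frame} $\mathcal{O}X$ (sending the top $X\mapsto Z$ and $\varnothing\mapsto\varnothing$, preserving $M$-joins, and preserving finite meets), since in $\mathcal{O}X$ the join is union and the meet is intersection. Proposition \ref{locale_interp} then supplies a unique continuous $f\colon Z\to\widehat{\mathcal{O}X}$, namely $f(z)=\{U\mid z\in\rho_{\mathcal{O}X}(U)\}$, with $f^{-1}\pi_{\mathcal{O}X}(U)=f^{-1}\widehat U=\rho_{\mathcal{O}X}(U)$ for every $U\in\mathcal{O}X$. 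This already secures the open-set half of the universal property, and the uniqueness clause of Proposition \ref{locale_interp} gives uniqueness of $f$ among continuous maps satisfying that half, hence \emph{a fortiori} among those satisfying both halves.

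It remains only to verify that this same $f$ is compatible with the point maps, i.e.\ that $f\circ\rho_X=\pi_X$. For $x\in X$ one computes $f(\rho_X(x))=\{U\mid \rho_X(x)\in\rho_{\mathcal{O}X}(U)\}$, and by the incidence axiom for $(Z,\rho_X,\rho_{\mathcal{O}X})$ this equals $\{U\mid x\in U\}=\pi_X(x)$, so the point condition holds automatically. I expect no serious obstacle: the entire content is the recognition that the frame data already determines the space interpretation and that the point map is forced. The only things to watch are the careful bookkeeping of the map directions between $X$, $Z$, and $\widehat{\mathcal{O}X}$, and confirming that complete primeness of $\pi_X(x)$ together with the incidence axiom hold exactly as the definitions require.
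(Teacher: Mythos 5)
Your proposal is correct and follows essentially the same route as the paper: strip the point data, recognize $(Z,\rho_{\mathcal{O}X})$ as a frame preinterpretation of $\mathcal{O}X$, invoke Proposition \ref{locale_interp} for the unique continuous $f$, and then verify $f\circ\rho_X=\pi_X$ via the incidence axiom. The only difference is that you also spell out the (routine) check that $\pi_X(x)$ is completely prime over $M$, which the paper leaves implicit.
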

\begin{proof}
    Suppose that $(Z,\rho_X,\rho_{\mathcal{O}X})$ is any topological preinterpretation of $X$. 
    Note that $(Z,\rho_{\mathcal{O}X}))$ is a topological interpretation of $\mathcal{O}X$ so by Proposition \ref{locale_interp}, there is a unique continuous $f\colon Z\to Y$ such that $\rho_{\mathcal{O}X}=f^{-1}\circ\pi_{\mathcal{O}X}$. In particular, all we must show is that $f\circ\rho_X=\pi_X$. 
    Indeed for each $x\in X$,
    \[\begin{aligned}
        f\circ\rho_X(x)&=\{U\in\mathcal{O}X\mid f\circ\rho_X(x)\in \widehat{U}\}\}\\
        &=\{U\in\mathcal{O}X\mid\rho_X(x)\in f^{-1}\widehat{U}\}\\
        &=\{U\in\mathcal{O}X\mid \rho_x(x)\in\rho_{\mathcal{O}X}(U)\}\\
        &=\{U\mid\mathcal{O}X\mid x\in U\}
    \end{aligned}\]
    with the first equality following from the definition of $\widehat{L}$, the second a standard manipulation, the third using that $\rho_{\mathcal{O}X}=f^{-1}\circ\pi_{\mathcal{O}X}$, and the fourth using the second defining property of topological preinterpretations. 
\end{proof}
\begin{remark}
    In the special case where $M=V$, $\widehat{\mathcal{O}X}$ is the soberification of $X$. 
    In particular, if we do not take any outer model, a space interprets to itself if and only if it is sober; this class includes most spaces of interest including all Hausdorff spaces. 
\end{remark}
Despite being nicely defined for spaces, we will see interpretation abstractly behaves much nicer with locales. 
For instance, we will see in the next section that interpretation commutes with arbitrary products (and more generally limits) of locales but interpretation \emph{does not} commute with even binary products of topological spaces. 

\section{Interpretation into forcing extensions} \label{force_section}
A special case of particular interest occurs when passing to a forcing extension. 
Throughout, $V$ will play the role of $M$ from the previous section and $V[G]$ will play the role of $V$. 
We will always The following proposition is a basic tool we will reference throughout our analysis. 
\begin{proposition} \label{pt-map}
    Suppose $\mathbb{B}$ is a complete Boolean algebra in $V$ and $L$ is a locale in $V$. There is a bijective correspondence between $\mathbb{B}$ names for points of $\widehat{L}$ up to forced equality and locale maps from $\mathbb{B}$ to $L$. 
    In particular, the points of $\widehat{L}$ in $V[G]$ are in bijective correspondence with the colimit in sets of the ground model hom sets $\operatorname{colim}_{b\in G}\operatorname{hom}_{\operatorname{Loc}}(\downarrow b,L)$.
\end{proposition}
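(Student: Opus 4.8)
The plan is to read a frame map off a name via its Boolean values and to recognize the frame axioms as exactly the defining conditions of a completely prime filter over $V$. Given a $\mathbb{B}$-name $\dot{\mathcal{F}}$ forced to be a point of $\widehat{L}$, I would define $g_{\dot{\mathcal{F}}}\colon L\to\mathbb{B}$ by $g_{\dot{\mathcal{F}}}(\ell)=\llbracket\check{\ell}\in\dot{\mathcal{F}}\rrbracket$. That $\dot{\mathcal{F}}$ is forced to be a filter immediately gives $g_{\dot{\mathcal{F}}}(1)=1$, $g_{\dot{\mathcal{F}}}(0)=0$, and $g_{\dot{\mathcal{F}}}(\ell\wedge\ell')=g_{\dot{\mathcal{F}}}(\ell)\wedge g_{\dot{\mathcal{F}}}(\ell')$, since membership of a meet in a filter is forced equivalent to membership of both factors. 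The one nontrivial axiom is preservation of joins: for $\cU\in\mathcal{P}(L)\cap V$, complete primeness over $V$ says exactly that $\bigvee\cU\in\dot{\mathcal{F}}$ is forced equivalent to $\dot{\mathcal{F}}\cap\cU\neq\varnothing$, and since $\cU\in V$ the latter has Boolean value $\bigvee_{\ell\in\cU}\llbracket\check{\ell}\in\dot{\mathcal{F}}\rrbracket$; hence $g_{\dot{\mathcal{F}}}(\bigvee\cU)=\bigvee_{\ell\in\cU}g_{\dot{\mathcal{F}}}(\ell)$. As every subset of $L$ lies in $V$, this covers all joins of $L$, so $g_{\dot{\mathcal{F}}}$ is a frame map, i.e.\ a locale map $\mathbb{B}\to L$. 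This dictionary is the heart of the argument and makes transparent why it is complete primeness \emph{over $V$}, rather than genuine complete primeness, that is the correct notion.

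For the inverse I would send a frame map $g\colon L\to\mathbb{B}$ to the canonical name $\dot{\mathcal{F}}_g=\{(\check{\ell},g(\ell))\mid\ell\in L\}$, for which $\llbracket\check{\ell}\in\dot{\mathcal{F}}_g\rrbracket=g(\ell)$. Running the axiom translation of the previous paragraph in reverse shows $\dot{\mathcal{F}}_g$ is forced to be a completely prime filter over $V$, hence a point of $\widehat{L}$. To see the two assignments are mutually inverse up to forced equality, I would use that any two names forced to be subsets of $\check{L}$ are forced equal precisely when their Boolean values $\llbracket\check{\ell}\in\cdot\rrbracket$ agree for every $\ell\in L$; since both assignments are recorded entirely by these Boolean values, the round trips are the identity. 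This establishes the first claim.

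For the ``in particular'', I would first note that $\downarrow b$ is itself a complete Boolean algebra for each nonzero $b\in\mathbb{B}$, so the first part applies with $\downarrow b$ in place of $\mathbb{B}$. The colimit is taken over $G$ directed downwards by $\leq$, with the transition map associated to $b''\leq b$ given by post-composition with the frame map $\downarrow b\to\downarrow b''$, $x\mapsto x\wedge b''$; concretely $g\mapsto g(\cdot)\wedge b''$. I would define the map from the colimit to points of $\widehat{L}$ by sending the germ of $g\colon L\to\downarrow b$ to $\mathcal{F}_g=\{\ell\in L\mid g(\ell)\in G\}$. This is independent of the representative because $b''\in G$ and $G$ is a filter, it is a filter by the frame axioms, and it is completely prime over $V$ by genericity: if $\bigvee_{\ell\in\cU}g(\ell)=g(\bigvee\cU)\in G$, then meeting $G$ against the ground-model dense set of conditions lying below some $g(\ell)$ or disjoint from $\bigvee_{\ell\in\cU}g(\ell)$ forces some $g(\ell)\in G$.

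Finally I would check this map is a bijection. For surjectivity, given a point $\mathcal{F}\in V[G]$ I choose a name $\dot{\mathcal{F}}$ realizing it and a condition $b\in G$ with $b\Vdash\dot{\mathcal{F}}\in\widehat{L}$; then $g(\ell)=\llbracket\check{\ell}\in\dot{\mathcal{F}}\rrbracket\wedge b$ is a frame map $L\to\downarrow b$ whose germ satisfies $\mathcal{F}_g=\mathcal{F}$, as $b\in G$. For injectivity, if germs of $g\colon L\to\downarrow b$ and $g'\colon L\to\downarrow b'$ yield the same point, their canonical names agree at $G$, so $\llbracket\dot{\mathcal{F}}_g=\dot{\mathcal{F}}_{g'}\rrbracket\in G$; meeting this value with $b\wedge b'$ produces a single $b''\in G$ below $b$ and $b'$ with $g(\ell)\wedge b''=g'(\ell)\wedge b''$ for all $\ell$, which is exactly the relation identifying the two germs in the colimit. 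The main obstacle I anticipate is precisely this last bookkeeping: a point of $V[G]$ need not be named by a globally forced name, so one is forced to work below conditions $b\in G$, and the content of the colimit is that points correspond to \emph{germs} of ground-model locale maps rather than to honest ground-model maps; keeping the transition maps, the genericity arguments, and the passage between names and germs consistent is where the care is needed.
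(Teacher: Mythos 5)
Your proof is correct and follows essentially the same route as the paper's: the same dictionary $g_{\dot\Fcal}(\ell)=\llbracket\check\ell\in\dot\Fcal\rrbracket$ between names for points and frame maps $L\to\mathbb{B}$, the same inverse given by the canonical name $\{\ell\mid g(\ell)\in\dot G\}$, and the same round-trip verification carried out on Boolean values. You in fact go further than the paper, which labels the frame-map verification ``tedious but straightforward'' and offers no argument at all for the ``in particular'' colimit clause; your germ-level bookkeeping (transition maps $g\mapsto g(\cdot)\wedge b''$, surjectivity by passing to a condition $b\in G$ forcing $\dot\Fcal$ to be a point and relativizing Boolean values below $b$, injectivity from $\llbracket\dot\Fcal_g=\dot\Fcal_{g'}\rrbracket\wedge b\wedge b'\in G$) correctly supplies the omitted argument.
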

\begin{proof}
    Given a name $\dot\Fcal$ for an element of $\widehat{L}$, let $f_{\dot\Fcal}\colon L\to \mathbb{B}$ be $f_{\dot\Fcal}(l)=\llbracket\check\ell\in\dot\Fcal\rrbracket$; 
    There is some tedious but straightforward work to show that $f_{\dot\Fcal}(l)$ is indeed a frame map. 
    For a frame map $f\colon L\to \mathbb{B}$, let $\dot\Fcal_f$ be a name for $\{\ell\mid f(\ell)\in\dot G\}$. 
    We claim these two functions are inverses of each other. 

    Indeed, given a name for a point $\dot{\mathcal{H}}$ of $\widehat{L}$, $b\in\mathbb{B}$, and $\ell\in L$, 
    \[\begin{aligned}
        b\Vdash \ell\in\dot{\mathcal{H}} &\Leftrightarrow b\leq \llbracket\ell\in \dot{\mathcal{H}}\rrbracket\\
        &\Leftrightarrow b\Vdash \llbracket\ell\in \dot{\mathcal{H}}\rrbracket\in\dot G\\
        &\Leftrightarrow b\Vdash \ell\in \dot \Fcal_{f_{\dot{\mathcal{H}}}}.
    \end{aligned}.\]
    In particular, $\Vdash\dot{\mathcal{H}}=\dot\Fcal_{\dot {f_{\dot{\mathcal{H}}}}}$.

    Conversely, if $g\colon L\to\mathbb{B}$ is a frame map and $\ell\in L$,
    \[\begin{aligned}
        g(\ell)&=\llbracket g(\ell)\in\dot G\rrbracket\\
        &=\llbracket \ell\in\dot\Fcal_g\rrbracket\\
        &=f_{\dot\Fcal_g}(\ell).
    \end{aligned}\]
\end{proof}

We note here that the structure of locales and maps between them transfers faithfully to the structure of a basis for open sets and continuous functions respectively after a sufficiently large collapse extension. 
We note that in each equivalence, the locale version implies the space version in all forcing extensions; we only need the full collapse to recover the full structure of the locales. 
\begin{proposition} \label{l_struct}
    Suppose $L,L'$ are locales and $\lambda\geq |L|,|L'|$. After forcing with $\operatorname{Coll}(\omega,\lambda)$, 
    \begin{enumerate}
        \item whenever $\ell\in L$ and $\ell\neq0$, $\widehat{\ell}$ is nonempty; \label{nonz}
        \item for all $\ell,\ell'\in L$, $\ell\leq\ell'$ if and only if $\widehat{\ell}\subseteq\widehat{\ell'}$; \label{sym}
        \item for each $\ell\in L$, $\widehat{\ell\rightarrow0}$ is the interior of the complement of $\widehat{\ell}$ for $\ell\rightarrow0$ the Heyting negation $\bigvee\{\ell'\mid \ell\wedge\ell'=0\}$; \label{comp}
        \item if $f,g\colon L\to L'$ are frame maps with $f\neq g$ then $\widehat{f}\neq\widehat{g}$; \label{neq} 
        \item if $f\colon L\to L'$ is a frame map which is not injective then $\widehat{f}$ is not surjective; \label{quo} 
        \item a frame map $f\colon L\to L'$ is surjective if and only if $\widehat{f}$ is an embedding; \label{emb}
        \item a frame map $f\colon L\to L'$ is an isomorphism if and only if $\widehat{f}$ is a homeomorphism. \label{homeo}
        
    \end{enumerate}
\end{proposition}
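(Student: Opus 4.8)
The plan is to treat part (1) as the technical heart and to derive (2)--(7) from it by soft frame-theoretic bookkeeping. The one external ingredient is the absorption property of the Levy collapse: if $P\in V$ is a separative poset with $|P|\le\lambda$, then $P\times\operatorname{Coll}(\omega,\lambda)$ has size $\le\lambda$ and collapses $\lambda$, hence is forcing-equivalent to $\operatorname{Coll}(\omega,\lambda)$; consequently $V[G]$ contains a $V$-generic filter for $P$. Since every poset I build has size at most $\max(|L|,|L'|)\le\lambda$, I may freely pass to such generics. Throughout I use that $\pi(\ell)=\widehat\ell$ preserves finite meets and $V$-joins, so $\widehat{\ell\wedge\ell'}=\widehat\ell\cap\widehat{\ell'}$ and $\widehat{\bigvee\mathcal U}=\bigcup_{u\in\mathcal U}\widehat u$ whenever $\mathcal U\in V$, and that $\widehat f^{-1}(\widehat\ell)=\widehat{f(\ell)}$ for a frame map $f$ (so each $\widehat f$ is continuous).

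For (1), fix $\ell\neq0$ and force with $P=\{a\in L: 0\neq a\le\ell\}$ (passing to its separative quotient if necessary), which is absorbed by the collapse. For each $\mathcal U\in\mathcal P(L)\cap V$ with $\bigvee\mathcal U=U$, the set $D_{\mathcal U}=\{a\in P: a\wedge U=0\text{ or }a\le u\text{ for some }u\in\mathcal U\}$ is dense in $P$: if $a\wedge U\neq0$ then by the infinite distributive law some $a\wedge u\neq0$ refines $a$ below $u$. Each $D_{\mathcal U}$ lies in $V$, so a $V$-generic $H\subseteq P$ meets all of them; then $\mathcal F=\{c\in L:\exists a\in H,\ a\le c\}$ is a filter containing $\ell$, and genericity against the $D_{\mathcal U}$ forces $\mathcal F$ to be completely prime over $V$, whence $\mathcal F\in\widehat\ell$. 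This is exactly the point/name correspondence of Proposition \ref{pt-map} made concrete. For (2), $\ell\le\ell'$ trivially gives $\widehat\ell\subseteq\widehat{\ell'}$; conversely if $\ell\not\le\ell'$, apply (1) inside the closed sublocale frame $K=\{a\in L:a\ge\ell'\}$ (join as in $L$, bottom $\ell'$) to the element $\ell\vee\ell'\neq\ell'=0_K$, obtaining a completely-prime-over-$V$ filter on $K$ through $\ell\vee\ell'$, and pull it back along the frame map $a\mapsto a\vee\ell'$; this yields $\mathcal F$ with $\ell\in\mathcal F$ and $\ell'\notin\mathcal F$, so $\widehat\ell\not\subseteq\widehat{\ell'}$.

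Parts (3)--(5) are then formal. For (3), $\widehat a\subseteq\widehat L\setminus\widehat\ell$ iff $\widehat{a\wedge\ell}=\widehat a\cap\widehat\ell=\varnothing$ iff (by (1)) $a\wedge\ell=0$; since the $\widehat a$ form a basis, the interior of the complement of $\widehat\ell$ is $\bigcup\{\widehat a:a\wedge\ell=0\}$, which by $V$-join preservation equals $\widehat{\bigvee\{a:a\wedge\ell=0\}}=\widehat{\ell\to0}$. For (4), if $f(\ell_0)\neq g(\ell_0)$, say $f(\ell_0)\not\le g(\ell_0)$, use (2) in $L'$ to get completely prime $\mathcal G$ with $f(\ell_0)\in\mathcal G$, $g(\ell_0)\notin\mathcal G$; then $\ell_0\in\widehat f(\mathcal G)\setminus\widehat g(\mathcal G)$. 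For (5), pick $a\neq b$ with $f(a)=f(b)$, say $a\not\le b$, and use (2) to get completely prime $\mathcal F$ on $L$ with $a\in\mathcal F$, $b\notin\mathcal F$; every image point $\widehat f(\mathcal G)=f^{-1}\mathcal G$ satisfies $a\in f^{-1}\mathcal G\Leftrightarrow b\in f^{-1}\mathcal G$, so $\mathcal F$ lies outside the image and $\widehat f$ is not surjective.

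For (6), forward: if $f$ is surjective then $\widehat f$ is injective (as $\mathcal G$ is recovered from $f^{-1}\mathcal G$) and continuous, and for $c=f(\ell)$ we get $\widehat f(\widehat c)=\widehat f(\widehat f^{-1}(\widehat\ell))=\widehat\ell\cap\operatorname{im}\widehat f$, so $\widehat f$ carries basic opens to relatively open sets and is an embedding. Conversely, factor $f=\iota\circ q$ with $q\colon L\twoheadrightarrow K:=f(L)$ surjective and $\iota\colon K\hookrightarrow L'$ the subframe inclusion, so $\widehat f=\widehat q\circ\widehat\iota$ with $\widehat q$ an embedding; thus $\widehat f$ an embedding forces $\widehat\iota$ to be one, which makes $\{\widehat k:k\in K\}$ generate the topology of $\widehat{L'}$. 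Then for each $c\in L'$, $\widehat c=\bigcup\{\widehat k:k\in K,\ k\le c\}=\widehat{\bigvee\{k\in K:k\le c\}}$, so $c=\bigvee\{k\in K:k\le c\}$ by (2), and since a subframe is closed under all joins this forces $c\in K$; hence $K=L'$ and $f$ is surjective. Finally (7) follows: a frame isomorphism gives a homeomorphism by functoriality, and a homeomorphism $\widehat f$ is an embedding (so $f$ is surjective by (6)) and surjective (so $f$ is injective by the contrapositive of (5)), and a bijective frame map is an order isomorphism, hence a frame isomorphism. The main obstacle is (1): choosing the poset and dense sets so that a $V$-generic filter is precisely a completely prime filter over $V$, and using absorption to get such generics despite there being $2^\lambda$ many $V$-covers to meet --- this is why $V$-genericity, not a Rasiowa--Sikorski argument inside $V[G]$, is needed. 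The other subtlety is the converse of (6), where the decisive structural fact is that the image of a frame map is closed under arbitrary joins, so a join-dense subframe is the whole frame; this is exactly what separates subframes from sublocales and makes ``surjective'' correspond to ``embedding'' rather than merely ``join-dense image.''
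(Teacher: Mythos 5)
Your proof is correct, and its engine is the same as the paper's: points of $\widehat{L}$ are manufactured as upward closures of $V$-generic filters on posets of size at most $\lambda$, which exist in the $\operatorname{Coll}(\omega,\lambda)$ extension by the absorption property of the Levy collapse, and the remaining items are then formal consequences of items (1) and (2). Two differences in execution are worth recording. For item (2) you apply item (1) inside the closed quotient frame $\mathord{\uparrow}\ell'=\{a\in L\mid a\ge\ell'\}$ and pull the resulting point back along the frame surjection $a\mapsto a\vee\ell'$; the paper instead forces directly with the poset $\{a\in L\mid a\le\ell,\ a\not\le\ell'\}$ --- the same idea, packaged differently, and both are valid. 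The genuine divergence is the converse half of item (6). You factor $f$ through its image subframe $K=f[L]$, observe that if $\widehat{f}$ is an embedding then so is the interpretation of the inclusion $K\hookrightarrow L'$, so that $\{\widehat{k}\mid k\in K\}$ is a basis for $\widehat{L'}$, and then use item (2) together with preservation of ground-model joins to conclude $c=\bigvee\{k\in K\mid k\le c\}\in K$ for every $c\in L'$, since the image of a frame map is closed under arbitrary joins. The paper argues contrapositively and more locally: it fixes $\ell\in L'$ outside the range of $f$, sets $a^*=\bigvee\{a\in L\mid f(a)\le\ell\}$, notes $f(a^*)<\ell$ because $f$ preserves joins and $\ell$ is not in the image, and checks that the image under $\widehat{f}$ of any point of $\widehat{\ell}\setminus\widehat{f(a^*)}$ witnesses that $\widehat{f}[\widehat{\ell}]$ is not relatively open. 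Your version is somewhat cleaner and isolates the structural reason surjectivity is forced (a join-dense subframe closed under all joins is the whole frame); the paper's version exhibits the exact point at which openness fails. Both arguments are complete, and your identification of the two delicate points --- that genuine $V$-genericity (not a Rasiowa--Sikorski construction in $V[G]$) is needed because $\mathcal{P}(L)\cap V$ need not be countable in the extension, and that closure of $f[L]$ under arbitrary joins is what separates ``embedding'' from mere ``join-dense image'' --- matches exactly where the paper's proof does its real work.
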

\begin{proof}
    For (\ref{nonz}), note that any $V$-generic filter on $L$ is a point of $L$ and $\operatorname{Coll}(\omega,\lambda)$ adds $V$-generic filters for all posets of size at most $\lambda$. 
    
    For (\ref{sym}), if $\ell\not\leq\ell'$ then forcing with $\{a\in L\mid a\leq \ell, a\not\leq\ell'\}$ adds a generic point to $\widehat{\ell}$ which is not in $\widehat{\ell'}$. 

    For (\ref{comp}), $\widehat{\ell\rightarrow0}$ is an open set disjoint from $\widehat{\ell}$ and if $b$ is any basic open set disjoint from $\ell$ then $b\wedge\ell=0$ by $(1)$ so $b\leq \ell\rightarrow0$, as desired. 

    For (\ref{neq}), if $f\neq g$ then we may fix some $\ell\in L$ with $f(\ell)\neq g(\ell)$. 
    By (\ref{sym}), there is an $x\in \widehat{f(\ell)}\triangle \widehat{g(\ell)}$. 
    Then $\ell\in \widehat{f}(x)\triangle \widehat{g}(x)$; in particular, $\widehat{f}(x)\neq\widehat{g}(x)$. 

    For (\ref{quo}), suppose $f$ is not injective and $\ell\neq\ell'$ are in $L$ with $f(\ell)= f(\ell')$. 
    Then $\widehat{f}^{-1}\widehat{\ell}=\widehat{f}^{-1}\widehat{\ell'}$. 
    Then any $x\in\widehat{\ell}\triangle\widehat{\ell'}$ is not in the range of $\widehat{f}$; note such an $x$ exists by (\ref{sym}). 

    For (\ref{emb}), first suppose $f$ is surjective. 
    First note that $\widehat{f}$ is injective as for any $x,y\in\widehat{L'}$, if $\ell\in x\triangle y$ and $f(\ell^*)=\ell$ then $\ell^*\in \widehat{f}(x)\triangle \widehat{f}(y)$. 
    Moreover, for any $\ell\in\widehat{L'}$, if $f(\ell^*)=\ell$ then $\widehat{f}[\widehat{\ell}]=\widehat{f}[\widehat{L}']\cap\widehat{\ell^*}$ is relatively open. 
    Conversely, if $f$ is not surjective we may fix some $\ell\in L'$ not in the range of $f$. 
    Let $a^*=\bigvee\{a\mid a\in L, f(a)\leq\ell\}$; note that $f(a^*)<\ell$ since $f$ preserves arbitrary joins and $\ell$ is not in the image of $f$. 
    Then if $d\in L$ is such that $\widehat{f}^{-1}\widehat{d}\subseteq\widehat{\ell}$ then $\widehat{f}^{-1}\widehat{d}=\widehat{f(d)}\subset \widehat{f(a^*)}$. 
    In particular, if $x\in\widehat{\ell}\setminus\widehat{\ell^*}$ then $f(x)$ is not in the relative interior of $\widehat{f}[\widehat{\ell}]$ (=$\widehat{a^*}\cap\widehat{f}[L']$). 

    Finally for (\ref{homeo}), if $\widehat{f}$ is a homeomorphism then by (\ref{quo}) and (\ref{emb}) $f$ is bijective and therefore an isomorphism. 
    Conversely, if $f$ is an isomorphism then $\widehat{f}$ is a homeomorphism with inverse $\widehat{f^{-1}}$.

\end{proof}

    \begin{remark}
        The converse of item (\ref{quo}) in Proposition \ref{l_struct} is false. 
        For instance, let $X$ be $\mathbb{R}$ with the discrete topology, $Y$ be $\mathbb{R}$ with the euclidean topology, and $f\colon \mathcal{O}Y\to\mathcal{O}X$ the natural inclusion map. 
        Then $f$ is injective but $\widehat{\mathcal{O}X}$ is the ground model reals with the discrete topology (as $\{\{x\}\mid x\in\mathbb{R}\}$ forms a cover of $X$) but, as we will see in Proposition \ref{complete_met}, $\widehat{\mathcal{O}Y}$ is the reals of the forcing extension. 
    \end{remark}

    \begin{remark}
        The monomorphisms of locales (i.e. the epimorphisms of frames) are generally viewed as a somewhat wild class of maps, see e.g. \cite[Chapter IV.6]{FrmLoc}. 
        However, a consequence of Proposition \ref{adj} is that monic arrows of locales always interpret to injective maps of spaces and by item \ref{neq} of Proposition \ref{l_struct}, maps which always interpret to an injection of spaces must be monic. 
        In particular, the difference between monic maps of locales and surjective maps of frames is detecting precisely the difference between injective continuous maps and embeddings. 
    \end{remark}

    Proposition \ref{pt-map} is a special case of an adjunction between locales in the ground model and names for spaces in a generic extension that we now construct. 
    The main corollary is that interpretation maps arbitrary limits, in particular products, of locales to the corresponding limit of spaces in a generic extension. 
    We contrast this with \cite[Theorem 3A]{Frem}, where products of spaces interpreted correctly only when all but countably many of the spaces are compact. 
    However, our results do come with the caveat that the product of locales may be quite a bit bigger than the product of spaces. 

    We first define a suitable category for interpretation to map into. 

    \begin{definition}
         For $\mathbb{B}$ a complete Boolean algebra, we denote by $\operatorname{Top}^{\mathbb{B}}$ the category where
    \begin{itemize}
        \item Objects are $\mathbb{B}$-names for topological spaces. 
        \item A morphism from $\dot X$ to $\dot Y$ is an equivalence class of names $\dot f$ for a continuous function from $\dot X$ to $\dot Y$, where we identify $\dot f$ and $\dot g$ if and only if $\Vdash_{\mathbb{B}}\dot f=\dot g$.
    \end{itemize}
    \end{definition}
   
    Note that $L\mapsto\widehat{L}$ gives a functor from $\operatorname{Loc}$ to $\operatorname{Top}^{\mathbb{B}}$ and if $\mathbb{B}$ is the trivial forcing then $\operatorname{Top}^{\mathbb{B}}$ is the normal category of topological spaces and continuous maps. The main goal of this section is to construct a left adjoint to $\,\widehat{}\,$; the main corollary is that interpretation preserves arbitrary limits of locales. 

    \begin{definition} \label{namespace}
        Given $\dot X\in \operatorname{Top}^{\mathbb{B}}$, the \emph{namespace locale} for $X$, denoted $L_{\dot X}$ is the locale with underlying set given by $\{U\mid\Vdash_{\mathbb{B}}U\subseteq\dot X\text{ open}\}/\simeq$ where $U\simeq W$ if and only if $\Vdash_{\mathbb{B}}U=W$. 
        Then declaring $U\leq W$ if and only if $\Vdash U\subseteq W$ turns $L_{\dot X}$ into a locale, with the join $\bigvee\cU$ a name for $\bigcup\cU$ and the meet $U\wedge W$ a name for $U\cap W$.
    \end{definition}

        An interesting special case is when $\dot X$ is a name for a one point space $*$. 
        Then $L_{\dot X}$ consists of all names $\dot U$ which are forced to be either $\varnothing$ or $*$; $U\mapsto\llbracket U=*\rrbracket$ then gives an isomorphism between $L_{*}$ and the complete Boolean algebra $\mathbb{B}$ itself.

        Note that $L_{\dot X}$ is naturally a functor from $\operatorname{Top}^{\mathbb{B}}$ to $\operatorname{Loc}$: if $\dot f\colon\dot X\to\dot Y$ is a name for a continuous function then we set $L_{\dot f}$ to be the frame map which assigns $U\in L_{\dot Y}$ to a name for the preimage of $U$ under $\dot f$. 
    
    The following is a generalization of the usual adjunction between spaces and locales. 
    \begin{proposition} \label{adj}
        For a locale $L$ and a name for space $\dot X$, there is a natural bijection between 
        \begin{itemize}
            \item Names for continuous functions $\dot f\colon\dot X\to \widehat{L}$ up to forced equality and
            \item Frame maps from $L$ to $L_{\dot X}$. 
        \end{itemize}
        
        That is to say, $L_{-}$ is the left adjoint to $\,\hat{}\,\colon \operatorname{Loc}\to \operatorname{Top}^{\mathbb{B}}$.
    \end{proposition}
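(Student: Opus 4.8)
The plan is to exhibit the bijection by hand in both directions and verify the two round trips, the recurring theme being that the defining clause of $\widehat{L}$ -- that its points are filters \emph{completely prime over} $M = V$ -- is precisely the fact that converts the relevant joins into unions. (Proposition \ref{pt-map} is essentially the special case $\dot X = *$, where $L_{\dot X}\cong\mathbb{B}$ and names for points of $\widehat L$ match frame maps $L\to\mathbb{B}$.) Since a locale map $L_{\dot X}\to L$ is by definition a frame map $L\to L_{\dot X}$, the asserted correspondence is exactly the hom-set bijection of the adjunction $L_{-}\dashv\,\widehat{}\,$. All the verifications below are pointwise assertions in an arbitrary generic extension $V[G]$, reformulated via the forcing relation for names modulo forced equality; I note once and for all that each notion involved (``filter'', ``completely prime over $V$'', ``continuous'', and the preimage identities) is invariant under the equivalence $\simeq$, so it suffices to argue in $V[G]$ for generic $G$.

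Given a frame map $\phi\colon L\to L_{\dot X}$, I would define a name $\dot f_\phi$ by stipulating that it is forced that $\dot f_\phi(x)=\{\ell\in\check L\mid x\in\phi(\ell)\}$ for each $x\in\dot X$, using a chosen open representative for each class $\phi(\ell)$; independence of the representative is immediate from $\simeq$. First I would check it is forced that $\dot f_\phi(x)\in\widehat L$. That $\dot f_\phi(x)$ is a filter (upward closed, meet-closed, containing $1$ but not $0$) follows from $\phi$ being order preserving and preserving finite meets, $\pi(1)$, and $\pi(0)$. Complete primeness over $V$ is the key point and uses that $\phi$ preserves \emph{all} joins: if $\mathcal{U}\in V\cap\mathcal{P}(L)$ and $x\in\phi(\bigvee\mathcal{U})=\bigvee_{\ell\in\mathcal{U}}\phi(\ell)$, then since the join in $L_{\dot X}$ is a name for the union $\bigcup_{\ell\in\mathcal{U}}\phi(\ell)$, some $\ell\in\mathcal{U}$ has $x\in\phi(\ell)$, i.e.\ $\mathcal{U}\cap\dot f_\phi(x)\neq\varnothing$. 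Continuity is then automatic, since $\dot f_\phi^{-1}(\widehat\ell)=\{x\mid\ell\in\dot f_\phi(x)\}=\phi(\ell)$ is forced open.

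Conversely, given a name $\dot f$ for a continuous map $\dot X\to\widehat L$, I would set $\phi_{\dot f}(\ell)$ to be the class of a name for $\dot f^{-1}(\widehat\ell)$, which lies in $L_{\dot X}$ as $\widehat\ell$ is basic open. Preservation of top, bottom, and finite meets follows from $\widehat 1=\widehat L$, $\widehat 0=\varnothing$, and $\widehat{\ell\wedge\ell'}=\widehat\ell\cap\widehat{\ell'}$, the last being the filter property of points. Preservation of joins is once more the crucial step: for $\mathcal{U}\in V$ I need $\Vdash\widehat{\bigvee\mathcal{U}}=\bigcup_{\ell\in\mathcal{U}}\widehat\ell$, which is exactly the statement that points of $\widehat L$ are completely prime over $V$; applying $\dot f^{-1}$ and recalling that joins in $L_{\dot X}$ are names for unions yields $\phi_{\dot f}(\bigvee\mathcal{U})=\bigvee_{\ell\in\mathcal{U}}\phi_{\dot f}(\ell)$.

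Finally I would check the assignments are mutually inverse. One direction is $\phi_{\dot f_\phi}(\ell)=\dot f_\phi^{-1}(\widehat\ell)=\phi(\ell)$, recovering $\phi$; the other is the forced computation $\dot f_{\phi_{\dot f}}(x)=\{\ell\mid x\in\dot f^{-1}(\widehat\ell)\}=\{\ell\mid\dot f(x)\in\widehat\ell\}=\{\ell\mid\ell\in\dot f(x)\}=\dot f(x)$, recovering $\dot f$ up to forced equality. Naturality in $L$ and in $\dot X$ is a routine diagram chase, as postcomposing by a frame map $L\to L'$, respectively precomposing by a name $\dot X'\to\dot X$, corresponds on both sides to composing preimages. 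I expect the main obstacle to be purely organizational: keeping the argument honest while passing between genuine functions in $V[G]$ and names modulo $\simeq$, and isolating the observation that join-preservation in \emph{both} directions rests on the single fact that $\widehat{\,\cdot\,}$ turns exactly the $M$-joins into unions -- the design principle behind the definition of $\widehat L$.
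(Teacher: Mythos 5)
Your proof is correct and takes essentially the same route as the paper's: the same two assignments ($\phi\mapsto\dot f_\phi$ via $x\mapsto\{\ell\mid x\in\phi(\ell)\}$ and $\dot f\mapsto\phi_{\dot f}$ via $\ell\mapsto$ a name for $\dot f^{-1}\widehat{\ell}$), the same use of complete primeness over $V$ to convert $V$-joins into unions, and the same two round-trip computations. Your explicit check that $\phi_{\dot f}$ is a frame map (in particular join preservation) is a detail the paper leaves implicit, so it only adds rigor.
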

    \begin{proof}
        Given a frame map $f\colon L\to L_{\dot X}$, we set $\varphi_f\colon \dot X\to\widehat{L}$ to be a name for the function which maps each $x\in X$ to $\{\ell\in L\mid x\in f(\ell)(\dot G)\}$. 
        First, note that $\varphi_f(x)\in\widehat{L}$ for each $x\in X$: $\varphi_f(x)$ is a filter since $f$ is order preserving and maps meets to names for intersections and if $\ell\in \varphi_f(x)$ and $\bigvee\cU=\ell$ then $\Vdash\bigcup_{a\in\cU}f(a)=f(\ell)$ so that $\varphi_f(x)\cap\cU\neq\varnothing$. 
        Moreover, $\varphi_f$ is continuous as the preimage of any basic open $\widehat{\ell}$ is the open set $f(\ell)(\dot G)$. 

        The inverse  of $f\mapsto \varphi_f$ assigns a name for a continuous function $\dot f\colon X\to\widehat{L}$ to the frame map $\psi_{\dot f}$ that associates $\ell\in L$ to a name for $\dot f^{-1}\widehat{\ell}$. 
        Indeed, if $f\colon L\to L_{\dot X}$ is a map of frames then $\psi_{\varphi_f}(\ell)$ is a name for $\varphi_f^{-1}\ell=f(\ell)$. 
        Conversely, if $\dot f$ is a name for a continuous function then for each $x\in X$,
        \[\begin{aligned}
            \varphi_{\psi_{\dot f}}(x)&=\{\ell\in L\mid x\in \psi_{\dot f}(\ell)(\dot G)\}\\
            &=\{\ell\in L\mid x\in f^{-1}\widehat{\ell}\}\\
            &= f(x).
        \end{aligned}\]
    \end{proof}
    We note that that the adjoint pair \,$\widehat{}$\, and $L_{-}$ are, in general, not idempotent, so there is little hope that we may restrict to an equivalence between a nice category of locales and a nice category of names for spaces as we can in the classical setting between sober spaces and spatial locales. 
    For instance, suppose $\dot X$ is a name for the one point space. Then $L_{-}$ is the locale of names which are forced to be either empty or all of $\dot X$, which is isomorphic to the complete Boolean algebra $\mathbb{B}$ itself. In particular, $\widehat{L}_{\dot X}$ is the space of $V$-generic filters on $\mathbb{B}$, which is very rarely a singleton. 

    \begin{remark}
        We could similarly define $\operatorname{Loc}^{\mathbb{B}}$ as locales and names for locale maps up to forced equality. 
        Remarkably, $\operatorname{Loc}^{\mathbb{B}}$ is equivalent to the slice category of locales over $\mathbb{B}$ (see \cite[Theorem 1.6.3]{elephant}). 
        Our interpretation can be viewed as starting with a locale $L$ and mapping it to the name for the locale corresponding to the projection map $L\oplus\mathbb{B}\to\mathbb{B}$ and then applying the spectrum functor in the forcing extension.
        However, we obtain interesting results mapping from locales to spaces rather than locales to locales coming from the additional input of points. 
        For instance, we obtain a test for whether some family is a cover in a locale by checking whether all of its interpretations are covers; this will give us a new proof that certain products of spatial locales are not spatial. 
        Moreover, we obtain a reasonable way to compare locale notions with spatial notions in forcing extensions. 
        While there is some information loss in passing to individual forcing extensions, we will see in Section \ref{glob_section} that mapping locales to a suitable notion of spaces defined in all forcing extensions is full and faithful. 
    \end{remark}

    Note that limits in $\operatorname{Top}^{\mathbb{B}}$ are just names for the limit of spaces. 
    Since right adjoints preserve all limits, a nice corollary of Proposition \ref{adj} is the following:

    \begin{corollary} \label{prod_preserve}
        Interpretation of an arbitrary limit of locales is the corresponding limit of spaces. 
        In particular, the interpretation of a product of locales is the product of their interpretations. 
    \end{corollary}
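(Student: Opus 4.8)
The plan is to deduce the statement from the general categorical principle that right adjoints preserve limits, applied to the adjunction of Proposition \ref{adj}. Writing $\widehat{(-)}\colon\operatorname{Loc}\to\operatorname{Top}^{\mathbb{B}}$ for the interpretation functor $L\mapsto\widehat{L}$, Proposition \ref{adj} exhibits $L_{-}$ as a left adjoint to $\widehat{(-)}$. Hence $\widehat{(-)}$ is a right adjoint, and so preserves every limit that exists in $\operatorname{Loc}$. Before invoking this I would record that $\operatorname{Loc}$ does have the relevant limits: since $\operatorname{Loc}=\operatorname{Frm}^{\mathrm{op}}$ and the category of frames is cocomplete, limits of locales are computed as colimits of the corresponding frames. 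In particular the product $L\oplus L'$ of Definition \ref{prod_def} is the frame coproduct, i.e.\ the categorical product in $\operatorname{Loc}$, and more generally an arbitrary $\bigoplus_i L_i$ is the limit in $\operatorname{Loc}$ of the $L_i$ indexed over a discrete category.

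Given any diagram $D\colon J\to\operatorname{Loc}$, preservation of limits then yields a canonical isomorphism $\widehat{\lim_J D}\cong\lim_J\widehat{D}$ computed in $\operatorname{Top}^{\mathbb{B}}$. It remains only to identify the right-hand side concretely, and here I would invoke the observation recorded immediately before the statement: limits in $\operatorname{Top}^{\mathbb{B}}$ are computed objectwise as names for the limit formed inside the generic extension, so that a limiting cone over names $\dot X_j$ is named by the space $\lim_j \dot X_j$ built in $V[G]$ with its universal property forced. Combining the two facts shows that $\widehat{\lim_J D}$ is a name for the limit, taken in the generic extension, of the spaces $\widehat{D(j)}$. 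Specializing to a discrete index category gives $\widehat{\bigoplus_i L_i}\cong\prod_i\widehat{L_i}$, which is exactly the assertion about products.

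The formal core of this argument is entirely routine, so the single step requiring genuine care, and the one I expect to be the main obstacle, is the identification of limits in $\operatorname{Top}^{\mathbb{B}}$ with names for limits of spaces. Verifying it amounts to checking that the universal property of a limit transfers correctly through the forcing relation: given a cone of names $(\dot f_j)$ whose commutativity is merely \emph{forced}, one must produce a single name $\dot h$ for the induced map into the limit and argue that it is unique up to forced equality. This relies on the limit map being constructed uniformly and definably inside each generic extension, together with the maximum principle to assemble the locally defined maps into one name; the delicate point is ensuring that forced commutativity of the cone, rather than literal commutativity, still suffices to invoke the internal universal property.
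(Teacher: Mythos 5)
Your proposal is correct and follows exactly the paper's route: the paper derives the corollary in one line from Proposition \ref{adj} (right adjoints preserve limits) together with the preceding remark that limits in $\operatorname{Top}^{\mathbb{B}}$ are just names for limits of spaces. Your additional care about verifying that latter identification (uniformly naming the induced map into the limit and handling forced rather than literal commutativity) is a reasonable elaboration of a point the paper simply asserts without proof.
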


    \begin{remark}
        Interpretation also preserves arbitrary coproducts of locales. 
        Indeed, if $\langle L_i\mid i\in I\rangle$ are locales, their coproduct is given by the pointwise product $\prod_iL_i$ with pointwise join and pointwise meet. 
        Then any $\ell\in\prod_iL_i$ is the join of conditions below $\ell$ which have all but one coordinate equaling $0$. 
        Interpretation does not, however, preserve arbitrary colimits; see Proposition \ref{union_prop} for a special case where colimits are preserved.
    \end{remark}

    We obtain the following test for the product of spatial locales to be spatial that will allow us to transfer both the positive and negative results of \cite{Zap} to consequences for the corresponding locales; first we cite \cite[Lemma II.2.13]{Johnstone}
    \begin{lemma} \label{II.2.13}
        Let $\langle X_i\mid i\in I\rangle$ be a family of spaces. Then the natural map from $\Omega(\prod_{i\in I}X_i)$ to $\bigoplus_{i\in I}\Omega(X_i)$ is an isomorphism of locales if and only if $\bigoplus_{i\in I}\Omega(X_i)$ is spatial.
    \end{lemma}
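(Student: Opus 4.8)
The plan is to work with the underlying frame map and to compare the two frames through their spaces of points. Let $\phi\colon\bigoplus_{i\in I}\mathcal{O}X_i\to\mathcal{O}(\prod_{i\in I}X_i)$ be the frame map corresponding to the natural locale map in the statement; it is the unique frame map with $\phi\circ\iota_i=\mathcal{O}\pi_i$, where $\iota_i$ is the coproduct insertion of the $i$-th summand and $\mathcal{O}\pi_i(U)=\pi_i^{-1}(U)$ is pullback along the projection $\pi_i\colon\prod_jX_j\to X_i$. The locale map is an isomorphism exactly when $\phi$ is, and I first note that $\phi$ is always surjective: its image is a subframe containing every $\pi_i^{-1}(U)$, hence every basic open $\bigcap_{i\in F}\pi_i^{-1}(U_i)$ with $F$ finite, and being closed under joins it is all of $\mathcal{O}(\prod_iX_i)$. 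One direction of the lemma is then immediate, since if $\phi$ is an isomorphism then $\bigoplus_i\Omega X_i\cong\Omega(\prod_iX_i)$ is visibly spatial. The content is the converse.

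For the converse I would use the canonical spatialization map. For any frame $A$ let $s_A\colon A\to\mathcal{O}(\Sigma A)$ send $a$ to $\widehat a=\{\mathcal{F}\in\Sigma A\mid a\in\mathcal{F}\}$; this is always a surjective frame map, it is natural in $A$ (for any frame map $h\colon A\to B$ the induced continuous map $\mathcal{G}\mapsto h^{-1}\mathcal{G}$ on spectra makes the evident square commute, both composites carrying $a$ to $\widehat{h(a)}$), and $A$ is spatial precisely when $s_A$ is injective, i.e. an isomorphism. Applying naturality to $\phi$ yields a commuting square whose vertical maps are $s_{\bigoplus_i\mathcal{O}X_i}$ and $s_{\mathcal{O}(\prod_iX_i)}$, whose top is $\phi$, and whose bottom is $\mathcal{O}(\Sigma\phi)$, where $\Sigma\phi\colon\Sigma\Omega(\prod_iX_i)\to\Sigma(\bigoplus_i\Omega X_i)$ is the induced map $\mathcal{F}\mapsto\phi^{-1}\mathcal{F}$ on spectra. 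Here $s_{\mathcal{O}(\prod_iX_i)}$ is an isomorphism because $\mathcal{O}(\prod_iX_i)$ is spatial, and $s_{\bigoplus_i\mathcal{O}X_i}$ is an isomorphism precisely by the standing hypothesis that $\bigoplus_i\Omega X_i$ is spatial. Hence, provided I can show $\mathcal{O}(\Sigma\phi)$ is an isomorphism, three of the four arrows in the square are isomorphisms and so is the fourth, forcing $\phi$ to be an isomorphism and completing the proof.

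Everything therefore reduces to showing that $\Sigma\phi$ is a homeomorphism; this is the step I expect to be the main obstacle. I would establish it in two stages. First, the points of a coproduct of frames are tuples of points of the summands: a point of $\bigoplus_i\mathcal{O}X_i$ is a frame map to $\{0,1\}$, and by the universal property of the coproduct these correspond bijectively to families of frame maps $\mathcal{O}X_i\to\{0,1\}$; inspecting the subbasic opens $\widehat{\iota_i(U)}$ shows the bijection is a homeomorphism $\Sigma(\bigoplus_i\Omega X_i)\cong\prod_i\Sigma(\Omega X_i)$. Second, $\Sigma(\Omega X_i)$ is the soberification of $X_i$, the frames of opens of $\prod_iX_i$ and $\prod_i\Sigma(\Omega X_i)$ coincide (soberification does not alter the opens, and the product topology is generated by pullbacks of opens of the factors), and products of sober spaces are sober; consequently $\Sigma\Omega(\prod_iX_i)$, being the soberification of $\prod_iX_i$, is already homeomorphic to $\prod_i\Sigma(\Omega X_i)$. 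Tracing $\phi^{-1}$ through these identifications — the filter of a point $(x_i)$ of the product pulls back to the tuple of the filters of the $x_i$, since $\phi(\iota_i(U))=\pi_i^{-1}(U)$ — identifies $\Sigma\phi$ with this homeomorphism. The one genuinely external input is the closure of sober spaces under products, which I would either cite or obtain from the fact that sober spaces form a reflective subcategory of $\operatorname{Top}$ and reflective subcategories are closed under limits.
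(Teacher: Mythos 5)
Your skeleton is the right one, and it is essentially the standard textbook route (the paper itself gives no proof of this lemma, citing it from Johnstone): $\phi$ is automatically surjective, the forward direction is trivial, and the converse reduces, via the naturality square for the spatialization maps $s_A$, to showing that $\Sigma\phi$ is a homeomorphism, for which you correctly identify $\Sigma\bigl(\bigoplus_i\Omega X_i\bigr)$ with $\prod_i\Sigma\Omega X_i$. But there is a genuine gap at exactly the step you flag as the main obstacle: the claim that the frames of opens of $\prod_iX_i$ and $\prod_i\Sigma\Omega X_i$ coincide. Your justification --- ``soberification does not alter the opens, and the product topology is generated by pullbacks of opens of the factors'' --- only shows that the comparison map $\mathcal{O}\bigl(\prod_i\Sigma\Omega X_i\bigr)\to\mathcal{O}\bigl(\prod_iX_i\bigr)$ (preimage along the product $\eta$ of the unit maps $\eta_i\colon X_i\to\Sigma\Omega X_i$) is surjective. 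It says nothing about injectivity, i.e.\ about which unions of basic opens cover which basic opens; that information is not determined by ``generation'' from the factor frames. This point-sensitive content is exactly what the whole lemma is about, and the same style of inference would ``prove'' that $\bigoplus_i\mathcal{O}X_i\to\mathcal{O}(\prod_iX_i)$ is always an isomorphism, making the lemma vacuous.

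The step is true, but it needs the one real idea of the argument: each point of $\Sigma\Omega X_i$ is the generic point of an irreducible closed subset of $X_i$, and such points can be traded for actual points of $X_i$. Concretely, suppose $W,W'$ are open in $\prod_i\Sigma\Omega X_i$ with some $p=(\mathcal{F}_i)_i\in W\setminus W'$, and pick a basic open $B=\bigcap_{i\in F}\pi_i^{-1}(\widehat{U_i})$ with $p\in B\subseteq W$. Each $\mathcal{F}_i$ equals $\{V\mid V\cap C_i\neq\varnothing\}$ for the nonempty closed set $C_i=X_i\setminus\bigcup\{V\mid V\notin\mathcal{F}_i\}$. Choose $x_i\in U_i\cap C_i$ for $i\in F$ and $x_i\in C_i$ arbitrarily otherwise; then $\eta(x)\in B\subseteq W$, while $\eta_i(x_i)\in\overline{\{\mathcal{F}_i\}}$ for every $i$, so $\eta(x)\in\overline{\{p\}}$ and hence $\eta(x)\notin W'$ (an open set missing $p$ misses the closure of $\{p\}$). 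Thus $\eta^{-1}(W)\neq\eta^{-1}(W')$, which is the missing injectivity. (Equivalently, you can skip the soberification detour and show directly that $\Sigma\phi\colon\mathcal{F}\mapsto\bigl(\{U\mid\pi_i^{-1}(U)\in\mathcal{F}\}\bigr)_i$ is a homeomorphism; injectivity, continuity and openness are routine, and surjectivity --- that the filter generated by the sets $\pi_i^{-1}(U)$, $U\in\mathcal{F}_i$, is completely prime --- is settled by the same choice of points $x_i\in C_i$.) One further small repair: at the end you verify that $\Sigma\phi$ agrees with your homeomorphism only on point filters of actual points of $\prod_iX_i$, which does not formally give agreement at all points; but the subbasis computation $\iota_i^{-1}(\phi^{-1}\mathcal{F})=\{U\mid\pi_i^{-1}(U)\in\mathcal{F}\}$ handles an arbitrary completely prime $\mathcal{F}$ in one line.
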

    Our analysis allows us to add a third clause:
    \begin{corollary} \label{prod_cor}
        Suppose $\langle X_i\mid i\in I\rangle$ are spaces. 
        The following are equivalent:
        \begin{enumerate}
            \item The natural locale map 
            \[{\Omega}\left(\prod_{i\in I}X_i\right)\to \bigoplus_{i\in I}{\Omega}X_i\]
            is an isomorphism; \label{iso_frame}
            \item $\bigoplus_{i\in I}{\Omega}X_i$ is spatial; \label{spatial}
            \item in all forcing extensions, the natural map \[\widehat{\Omega\left(\prod_{i\in I}X_i\right)}\to \prod_{i\in I}\widehat{\Omega X_i}\]
            is a homeomorphism. \label{prod_all}
        \end{enumerate}
        \begin{proof}
            $(\ref{iso_frame}\Leftrightarrow\ref{spatial})$ is Lemma \ref{II.2.13}. 
            $(\ref{iso_frame})\Rightarrow(\ref{prod_all})$ follows since interpretation preserves arbitrary products of locales by Proposition \ref{adj}. 
            For $(\ref{prod_all})\Rightarrow(\ref{iso_frame})$, if the locale map of (\ref{iso_frame}) is not an isomorphism then by item \ref{homeo} of Proposition \ref{l_struct}, this map does not interpret to a homeomorphism of spaces in all sufficiently large collapse extensions. 
            Since interpretation does preserve arbitrary products of locales by Proposition \ref{adj}, the map in (\ref{prod_all}) is not a homeomorphism in all sufficiently large collapse extensions. 
            
        \end{proof}
    \end{corollary}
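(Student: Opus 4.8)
The plan is to take $(\ref{iso_frame})\Leftrightarrow(\ref{spatial})$ for free from the cited Lemma \ref{II.2.13}, and to tie both to clause $(\ref{prod_all})$ by transporting the natural locale map of $(\ref{iso_frame})$ through the interpretation functor, using the limit-preservation of Corollary \ref{prod_preserve} on one side and the isomorphism-detection of Proposition \ref{l_struct} on the other.

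First I would fix notation, writing $\varphi\colon\Omega(\prod_{i\in I}X_i)\to\bigoplus_{i\in I}\Omega X_i$ for the natural locale map of $(\ref{iso_frame})$. Since $\bigoplus_{i\in I}\Omega X_i$ is by definition the product of the $\Omega X_i$ in $\operatorname{Loc}$, Corollary \ref{prod_preserve} supplies a canonical homeomorphism $\widehat{\bigoplus_{i\in I}\Omega X_i}\cong\prod_{i\in I}\widehat{\Omega X_i}$. I would then check that this identification is natural enough that the canonical map appearing in $(\ref{prod_all})$ is precisely $\widehat{\varphi}$ followed by this homeomorphism; this reduces the whole problem to comparing $\varphi$ with $\widehat{\varphi}$.

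For $(\ref{iso_frame})\Rightarrow(\ref{prod_all})$, I would observe that if $\varphi$ is a locale isomorphism then, because $\,\widehat{}\,$ is a functor, $\widehat{\varphi}$ is a homeomorphism with inverse $\widehat{\varphi^{-1}}$. This argument is a ground-model statement about frames and so holds verbatim in every forcing extension; composing with the canonical homeomorphism above then makes the natural map of $(\ref{prod_all})$ a homeomorphism in all extensions. For the converse I would argue contrapositively: if $\varphi$ is not a locale isomorphism, then item $(\ref{homeo})$ of Proposition \ref{l_struct} guarantees that $\widehat{\varphi}$ is not a homeomorphism after forcing with $\operatorname{Coll}(\omega,\lambda)$ for $\lambda\geq|\Omega(\prod_{i\in I}X_i)|,\,|\bigoplus_{i\in I}\Omega X_i|$. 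Since the map of $(\ref{prod_all})$ agrees with $\widehat{\varphi}$ up to the canonical homeomorphism, it too fails to be a homeomorphism in that particular extension, so $(\ref{prod_all})$ fails.

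The hard part will be the naturality bookkeeping of the second paragraph: confirming that the product homeomorphism from Corollary \ref{prod_preserve} intertwines the canonical spatial comparison map with $\widehat{\varphi}$, so that the two notions of ``natural map'' really coincide. Everything else is formal — functoriality gives one direction, and Proposition \ref{l_struct} applied in a sufficiently large collapse extension (whose size bound is automatic since both frames are sets) gives the other.
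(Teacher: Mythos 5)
Your proposal is correct and takes essentially the same approach as the paper: $(\ref{iso_frame})\Leftrightarrow(\ref{spatial})$ is quoted from Lemma \ref{II.2.13}, $(\ref{iso_frame})\Rightarrow(\ref{prod_all})$ follows from preservation of localic products under interpretation (Proposition \ref{adj}, via Corollary \ref{prod_preserve}), and $(\ref{prod_all})\Rightarrow(\ref{iso_frame})$ is argued contrapositively using item \ref{homeo} of Proposition \ref{l_struct} in a sufficiently large collapse extension. The only difference is one of explicitness: you spell out the naturality bookkeeping identifying the canonical map of $(\ref{prod_all})$ with the interpretation of the locale map composed with the product homeomorphism, a step the paper leaves implicit.
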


    Corollary \ref{prod_preserve} can fail horribly for products of spaces, as examples 7.8, 7.9, 7.10, and 13.3 of \cite{Zap} show. 
    In particular, the products of spaces in each of these examples must be different to the localic product. 
    As a consequence of Corollary \ref{prod_cor}, we obtain the following corollary applied to each of these examples. 
    \begin{corollary} \label{non_spatial}
        
        Suppose $X$ is either $\mathbb{Q}$ with the topology inherited as a subspace of $\mathbb{R}$ or the Sorgenfrey line. 
        Then ${\Omega}X\oplus {\Omega}X$ is not spatial. 
        If $X$ is the space of wellfounded trees on $\omega$ then ${\Omega}X\oplus {\Omega}(\omega^\omega)$ is not spatial. 
        The uncountable localic product $\bigoplus_{i<\omega_1}{\Omega}(\omega)$ is not spatial. 
    
    \end{corollary}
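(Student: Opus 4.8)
The plan is to read every clause off the equivalence $(\ref{spatial})\Leftrightarrow(\ref{prod_all})$ of Corollary \ref{prod_cor}. Fixing a family $\langle X_i\mid i\in I\rangle$, to prove that $\bigoplus_{i\in I}\Omega X_i$ is not spatial it suffices, by that equivalence, to produce a \emph{single} forcing extension in which the natural comparison map
\[\widehat{\Omega\left(\prod_{i\in I}X_i\right)}\to \prod_{i\in I}\widehat{\Omega X_i}\]
is not a homeomorphism. By Corollary \ref{prod_preserve} the target is genuinely the interpretation of the localic product $\bigoplus_i\Omega X_i$, so this is exactly the assertion that interpretation fails to commute with the product of the spaces $X_i$ — and each such failure is furnished by one of the examples of \cite[Section 7]{Zap} and \cite[13.3]{Zap}.

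I would then match each case to its example. For $X=\mathbb{Q}$ and for $X$ the Sorgenfrey line, the relevant examples of \cite[Section 7]{Zap} exhibit a forcing extension in which $\widehat{\Omega(X\times X)}$ and $\widehat{\Omega X}\times\widehat{\Omega X}$ are not homeomorphic, so that $\Omega X\oplus\Omega X$ is not spatial. Taking $X$ to be the space of wellfounded trees on $\omega$ and the second factor $\omega^\omega$, the same reasoning applied to \cite[13.3]{Zap} gives that $\Omega X\oplus\Omega(\omega^\omega)$ is not spatial. For the uncountable product I would take $X_i=\omega$ discrete for every $i<\omega_1$; here each factor interprets trivially, since any completely prime filter on $\mathcal{P}(\omega)$ must contain a singleton (as $\omega=\bigvee_n\{n\}$ is a ground model cover) and is therefore principal, whence $\widehat{\Omega\omega}=\omega$ and $\prod_{i<\omega_1}\widehat{\Omega\omega}=\omega^{\omega_1}$, while the corresponding example of \cite[Section 7]{Zap} produces an extension in which the interpretation $\widehat{\Omega(\omega^{\omega_1})}$ of the product space differs from the product $\omega^{\omega_1}$ of the factor interpretations; thus $\bigoplus_{i<\omega_1}\Omega(\omega)$ is not spatial.

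The main obstacle is the dictionary between Zapletal's interpretation and the functor $\,\widehat{\,\cdot\,}\,$: I must check that for each of these spaces the interpretation $\widehat{\Omega(\,\cdot\,)}$ coincides with the space Zapletal assigns, so that his computed discrepancy is literally a failure of the map in $(\ref{prod_all})$. This is where the identification of the interpretable spaces of \cite{Zap} with spaces on which the spatial and localic notions agree does the work — all the factors above are Hausdorff, so their interpretations agree with the soberification-based construction, and the products in question are exactly the ones Zapletal shows to be badly behaved. I would additionally note, as a sanity check on the shape of the failure, that the underlying frame map $\bigoplus_{i}\Omega X_i\to\Omega(\prod_i X_i)$ is surjective because products of opens generate the product topology, so in a sufficiently large collapse extension item (\ref{emb}) of Proposition \ref{l_struct} makes the comparison map an embedding; the failure therefore always takes the form of points of $\prod_i\widehat{\Omega X_i}$ lying outside the image of $\widehat{\Omega(\prod_i X_i)}$. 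Granting the dictionary, the conclusion is then immediate from Corollary \ref{prod_cor}.
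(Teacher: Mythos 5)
Your strategy is the paper's own: combine the equivalence $(\ref{spatial})\Leftrightarrow(\ref{prod_all})$ of Corollary \ref{prod_cor} with Zapletal's examples of products that interpret badly, and for $\mathbb{Q}$ and the Sorgenfrey line (which are among examples 7.8--7.10 of \cite{Zap}) your argument is exactly what the paper does. Your structural observation that the frame map $\bigoplus_i\Omega X_i\to\mathcal{O}\left(\prod_i X_i\right)$ is surjective, so that by item (\ref{emb}) of Proposition \ref{l_struct} the comparison map is an embedding in a sufficiently large collapse extension and every failure consists of missing points, is also correct and consistent with the shape of all four examples.

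However, you have the two remaining cases matched to the wrong examples, and for one of them this matters. The wellfounded-trees case is also covered by Section 7 of \cite{Zap} and is handled exactly like the first two, while the uncountable product $\bigoplus_{i<\omega_1}\Omega(\omega)$ rests on \cite[Claim 13.3]{Zap} --- and here lies the genuine gap: Claim 13.3 cannot be cited as a black box. As the paper's remark following the corollary explains, that claim concerns interpreting a space in an intermediate extension and then re-interpreting the resulting space in a larger extension; this is a space-to-space operation that does not type-check against the locale-to-space functor $\widehat{\,\cdot\,}$, and moreover its full statement requires CH. So your sentence asserting that a Section 7 example ``produces an extension in which $\widehat{\Omega(\omega^{\omega_1})}$ differs from the product of the factor interpretations'' has no citable source; one must extract the CH-free content of Zapletal's proof by hand. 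The paper does this directly: fix an Aronszajn tree $T\subseteq\omega^{<\omega_1}$, enumerate its levels as $\{t^n_\alpha\mid n<\omega\}$, and consider the ground-model open family $U_{\alpha\beta}=\{x\in\omega^{\omega_1}\mid t^{x(\alpha)}_\alpha\not\leq t^{x(\beta)}_\beta\}$ for $\alpha<\beta$. This covers $\omega^{\omega_1}$ in $V$, since a point avoiding every $U_{\alpha\beta}$ codes an uncountable branch through $T$; yet after collapsing $\omega_1$, a branch through $T$ produces a point of $\prod_{i<\omega_1}\widehat{\Omega(\omega)}$ lying in no interpreted $U_{\alpha\beta}$, hence (every point of $\widehat{\Omega(\omega^{\omega_1})}$ must lie in some $\widehat{U_{\alpha\beta}}$ by complete primeness over $V$) outside the image of the comparison map. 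Your computation $\widehat{\Omega(\omega)}=\omega$ is the right first step, but without an argument of this shape the fourth assertion of the corollary remains unproved.
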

    Note that for $X=\mathbb{Q}$ the result already appears as \cite[Lemma II.2.14]{Johnstone}; one nice point about our proof is that we need no explicit computations involving the localic product. 
    \begin{remark}
        \cite[Claim 13.3]{Zap} is an example of a space where interpreting a space in an intermediate extension and then in the full extension yields a different result than interpreting directly to the full extension, namely the product of $\omega_1$ many copies of the natural numbers $\omega^{\omega_1}$. 
        This problem does not quite type check in the present context: our interpretation goes from locales to spaces rather than spaces to spaces. 
        We still, however, obtain interesting information from this example and, while the full strength of \cite[Claim 13.3]{Zap}
        requires the continuum hypothesis, the relevant direction of the proof for Corollary \ref{non_spatial} does not and we may still obtain that $\bigoplus_{i<\omega_1}\Omega(\omega)$ is nonspatial. 
        Indeed, if $T\subseteq\omega^{<\omega_1}$ is any Aronszajn tree, fix enumerations $\{t_{\alpha}^n\mid n<\omega\}$ of each level of $T$. 
        The family $U_{\alpha\beta}=\{x\in\omega^{\omega_1}\mid t_\alpha^{x(\alpha)}\not\leq t_\beta^{x(\beta)}\}$ is a cover of $\omega^{\omega_1}$ which is no longer a cover in any forcing with a branch through $T$ (for instance after collapsing $\omega_1$).
    \end{remark}
    The example of $\mathbb{Q}^2$ generalizes greatly to produce a plethora of spatial locales whose product is not spatial. 
    \begin{proposition}
        Suppose $X$ is a nonempty Hausdorff space and $A,B\subseteq X$ are disjoint and dense. 
        Then ${\Omega}(A)\oplus{\Omega}(B)$ is not spatial. 
    \end{proposition}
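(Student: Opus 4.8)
The plan is to play off two notions of covering against each other: I will produce a family of basic open rectangles that covers the space $A\times B$ but cannot be a cover of the product locale, and then invoke Corollary~\ref{prod_cor}. Concretely, if $\Omega(A)\oplus\Omega(B)$ were spatial, the natural frame map $\theta\colon\mathcal O A\oplus\mathcal O B\to\mathcal O(A\times B)$ would be an isomorphism, so a family covering the space $A\times B$ (its join is the top of $\mathcal O(A\times B)$) would pull back to a family whose join is the top of $\mathcal O A\oplus\mathcal O B$, i.e. a genuine localic cover; by Corollary~\ref{prod_preserve} such a cover would then cover every point of $\widehat{\Omega(A)\oplus\Omega(B)}=\widehat{\mathcal O A}\times\widehat{\mathcal O B}$. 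Exhibiting a single point of this product missed by the family therefore forces non-spatiality, in the spirit of the covering test described after Corollary~\ref{non_spatial}.

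First I would record the elementary observation that, because $A$ is dense in $X$, the restriction map $r_A\colon\mathcal O X\to\mathcal O A$, $N\mapsto N\cap A$, is a frame isomorphism: it preserves finite meets and arbitrary joins, it is surjective since $A$ carries the subspace topology, and it is injective because two opens of $X$ with the same dense trace coincide. The same holds for $r_B$, and since $X$ is nonempty both $A$ and $B$ are nonempty. Using that an isomorphism $f$ induces a homeomorphism $\widehat f$ with inverse $\widehat{f^{-1}}$ (the forcing-free direction of item~\ref{homeo} of Proposition~\ref{l_struct}), each point $x\in X$ yields completely prime filters $\mathcal F^A_x=\widehat{r_A^{-1}}(p_x)$ and $\mathcal F^B_x=\widehat{r_B^{-1}}(p_x)$ on $\mathcal O A$ and $\mathcal O B$, where $p_x=\{N\in\mathcal O X\mid x\in N\}$ is the principal completely prime filter at $x$. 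Unwinding $r_A^{-1}(U\cap A)=U$, one gets the key membership translation: $U\cap A\in\mathcal F^A_x$ iff $x\in U$, and $V\cap B\in\mathcal F^B_x$ iff $x\in V$, for all $U,V\in\mathcal O X$.

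Next I would fix the witnessing family $\mathcal C=\{(U\cap A)\times(V\cap B)\mid U,V\in\mathcal O X,\ U\cap V=\varnothing\}$ and a point $x\in X$. Since $A,B$ are disjoint, every $(a,b)\in A\times B$ has $a\neq b$, so Hausdorffness supplies disjoint opens $U\ni a$, $V\ni b$; hence $\mathcal C$ covers $A\times B$, and its join is the top of $\mathcal O(A\times B)$. On the other hand, by the membership translation the pair $(\mathcal F^A_x,\mathcal F^B_x)$ lies in $\widehat{U\cap A}\times\widehat{V\cap B}$ iff $x\in U$ and $x\in V$, which never happens for disjoint $U,V$; so the interpreted family $\{\widehat{U\cap A}\times\widehat{V\cap B}\}$ misses the point $(\mathcal F^A_x,\mathcal F^B_x)\in\widehat{\mathcal O A}\times\widehat{\mathcal O B}$. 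Assuming $\Omega(A)\oplus\Omega(B)$ spatial, Corollary~\ref{prod_cor} makes $\theta$ an isomorphism, so $\mathcal C$ is a localic cover; applying interpretation, which by Corollary~\ref{prod_preserve} identifies $\widehat{\Omega(A)\oplus\Omega(B)}$ with $\widehat{\mathcal O A}\times\widehat{\mathcal O B}$ and commutes with the join of the set $\mathcal C$, yields $\bigcup_{\mathcal C}\widehat{U\cap A}\times\widehat{V\cap B}=\widehat{\mathcal O A}\times\widehat{\mathcal O B}$, contradicting the escaping point. Hence $\Omega(A)\oplus\Omega(B)$ is not spatial.

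The hard part will be the bookkeeping that links the two senses of ``cover'': one must be careful that $\mathcal C$ is only a cover of the \emph{space} $A\times B$ and becomes a genuine cover of the \emph{locale} precisely under the spatiality assumption (via $\theta^{-1}$), and that this localic cover is then transported faithfully by interpretation to a cover of the full product of spectra. Density of $A$ and $B$ is doing double duty throughout: it makes $r_A,r_B$ isomorphisms, so that ordinary points of $X$ produce honest completely prime filters whose traces detect exactly membership in ground-model opens, and it guarantees that the filters $\mathcal F^A_x,\mathcal F^B_x$ are proper (an open $N\ni x$ meets both $A$ and $B$). The disjointness of $A$ and $B$, meanwhile, is exactly what makes the diagonal-type point $(\mathcal F^A_x,\mathcal F^B_x)$ unreachable by disjoint rectangles.
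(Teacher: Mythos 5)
There is a genuine gap, and it sits at the heart of your construction: the restriction map $r_A\colon\mathcal{O}X\to\mathcal{O}A$, $N\mapsto N\cap A$, is \emph{not} injective, so it is not a frame isomorphism and $r_A^{-1}$ does not exist. Indeed, since $X$ is Hausdorff (hence $T_1$) and $B\neq\varnothing$, pick $b\in B$; then $X$ and $X\setminus\{b\}$ are distinct open sets with the same trace on $A$, because $b\notin A$. For the same reason your ``membership translation'' is inconsistent: with $U=X$ and $U'=X\setminus\{b\}$ one would get both $A\in\mathcal{F}^A_b$ and $A\notin\mathcal{F}^A_b$. More importantly, the objects $\mathcal{F}^A_x$ you want (the trace on $A$ of the neighborhood filter of a ground-model point $x\notin A$) are simply not completely prime filters on $\mathcal{O}A$: for $X=\mathbb{R}$, $A=\mathbb{Q}$, $x=\sqrt2$, the ground-model family $V_n=\mathbb{Q}\cap\bigl((-\infty,\sqrt2-\tfrac1n)\cup(\sqrt2+\tfrac1n,\infty)\bigr)$ covers $A$, yet no $V_n$ contains the trace of any neighborhood of $x$. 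So your claimed escaping point $(\mathcal{F}^A_x,\mathcal{F}^B_x)$ is not a point of $\widehat{\mathcal{O}A}\times\widehat{\mathcal{O}B}$ at all (note that by disjointness of $A$ and $B$, at least one of the two coordinates always suffers this defect). In fact no ground-model argument of this shape can work: in the trivial forcing extension $\widehat{\mathcal{O}A}$ is the soberification of $A$, which equals $A$ since $A$ is Hausdorff, so the points of $\widehat{\mathcal{O}A}\times\widehat{\mathcal{O}B}$ are exactly $A\times B$, and your cover $\mathcal{C}$ \emph{does} cover all of them. That is precisely the content of non-spatiality here: the join of $\mathcal{C}$ is not the top element even though no actual point escapes it.

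The paper repairs exactly this defect with forcing, and this is the one idea your write-up is missing. One forces with $\Omega(X)$ itself, adding a generic filter $x$, and uses genericity together with density of $A$ in $X$ to show that the trace $\{U\cap A\mid U\in x\}$ \emph{is} completely prime over the ground model: given $U\in x$ and a ground-model family $\cU$ of open subsets of $A$ covering $U\cap A$, the set of conditions $W\leq U$ with $W\cap A\subseteq W^*$ for some $W^*\in\cU$ is dense below $U$ (any nonempty open $W$ meets $A$, and one intersects $W$ with an open set of $X$ tracing to a member of $\cU$), so the generic filter meets it. The same holds for $B$, and then the single generic point yields the pair $(x\restriction A,x\restriction B)$ in $\widehat{\Omega(A)}\times\widehat{\Omega(B)}$ (in $V[G]$) which no interpreted rectangle $\widehat{I\cap A}\times\widehat{J\cap B}$ with $I\cap J=\varnothing$ can contain; combined with Corollary \ref{prod_cor} this gives non-spatiality. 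Your outer frame -- the covering test via Corollary \ref{prod_cor}, the cover by disjoint rectangles using Hausdorffness and disjointness of $A,B$ -- matches the paper, but the witness point must be generic over the ground model, not a ground-model point of $X$.
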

    \begin{proof}
        By Corollary \ref{prod_cor}, it suffices to show that ${\Omega}(A\times B)$ does not evaluate to ${\Omega}(A)\times{\Omega}(B)$ in some forcing extension; we show that forcing on ${\Omega}(X)$ itself will do. 
        To this end, we note that any generic filter $x$ on ${\Omega}(X)$ gives rise to an element of both $\widehat{{\Omega}(A)}$ and $\widehat{{\Omega}(B)}$. 
        Indeed, if $U\in x$ and $\cU$ is a family of open subsets of $A$ such that $A\cap\bigcup\cU=A\cap U$ then since $A$ is dense, $\{W\in{\Omega}(X)\mid \exists W^*\in\cU(W\cap A\subseteq W^*)\}$ is dense below $U$ so $\{U\cap A\mid U\in x\}$ is a point of $\widehat{A}$; 
        an identical proof holds for $\widehat{B}$. 
        Then $\{(I\cap A,J\cap B)\mid I,J\in{\Omega}(X), I\cap J=\varnothing\}$ is a cover of the space $A\times B$ which does not interpret to a cover of $\widehat{A}\times\widehat{B}$ as $(x,x)$ is in no interpretation of any element of this cover. 
        
    \end{proof}
    Note the same proof yields that any pair of dense subspaces of a space can always be forced to intersect. 
    One interesting point worth remarking is that each of these examples do not necessarily require us to pass to a large collapse extension; many in fact require adding only a single Cohen real. 

    The positive results of \cite{Zap} also transfer to results about locales. 
    As an immediate consequence of Corollary \ref{prod_cor} and \cite[Theorem 7.4]{Zap}, we obtain the following
    \begin{corollary}
        If $\langle X_i\mid i<\omega\rangle$ are interpretable in the sense of \cite{Zap} then $\bigoplus_{i<\omega}{\Omega}X_i$ is spatial. 
    \end{corollary}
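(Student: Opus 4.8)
The plan is to derive this directly from the equivalence $(\ref{spatial})\Leftrightarrow(\ref{prod_all})$ in Corollary \ref{prod_cor}. That corollary tells us that $\bigoplus_{i<\omega}\Omega X_i$ is spatial exactly when, in all forcing extensions, the natural comparison map
\[\widehat{\Omega\left(\prod_{i<\omega}X_i\right)}\to\prod_{i<\omega}\widehat{\Omega X_i}\]
is a homeomorphism. So it suffices to verify that this comparison map is a homeomorphism in every forcing extension, and this is precisely the content I would extract from \cite[Theorem 7.4]{Zap}: for a countable family of interpretable spaces, the (Zapletal) interpretation of the product agrees with the product of the interpretations.

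First I would confirm the dictionary between the two frameworks. For an interpretable space $X$, the interpretation $\widehat{\Omega X}$ constructed here coincides with Zapletal's interpretation of $X$, so that $\prod_{i<\omega}\widehat{\Omega X_i}$ is literally the product of Zapletal's interpretations and $\widehat{\Omega(\prod_{i<\omega}X_i)}$ is Zapletal's interpretation of the product space. With this identification in place, \cite[Theorem 7.4]{Zap} says exactly that the displayed map is a homeomorphism. Since interpretability is preserved under the relevant operations and Zapletal's theorem is proved uniformly, the same conclusion holds in every forcing extension, yielding clause $(\ref{prod_all})$ and hence, by Corollary \ref{prod_cor}, clause $(\ref{spatial})$.

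The main obstacle I anticipate lies not in the logical derivation, which is essentially immediate, but in verifying the identification of the two notions of interpretation and in checking that the product space $\prod_{i<\omega}X_i$ falls within the scope to which \cite[Theorem 7.4]{Zap} applies in each forcing extension. In particular I would want to be sure that Zapletal's hypotheses on the $X_i$ transfer to every extension, so that the theorem can be invoked uniformly rather than only in the ground model, and that the comparison map of Corollary \ref{prod_cor} is genuinely the same map whose homeomorphism-ness Zapletal establishes, rather than merely an abstractly isomorphic one.
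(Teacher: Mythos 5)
Your proposal matches the paper's own proof exactly: the paper derives this corollary as an immediate consequence of the equivalence in Corollary \ref{prod_cor} together with \cite[Theorem 7.4]{Zap}, which is precisely your argument. The caveats you raise about identifying Zapletal's interpretation with $\widehat{\Omega X}$ for interpretable spaces are reasonable diligence, but the paper treats this identification as understood (it is addressed via \cite[Theorem 16.1]{Zap} in Section \ref{comp_section}) and offers no further detail.
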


    We now note some heavy-handed use of absoluteness allows us to extend the results of this section to all outer models, not simply forcing extensions. 
    For instance, 
    \begin{proposition} \label{prod_pres}
        Suppose $M$ is a transitive model of set theory and $M\models L,L'$ are locales. 
        Then the natural map $i\colon\widehat{L\oplus L'}\to\widehat{L}\times\widehat{L'}$ is a homeomorphism. 
    \end{proposition}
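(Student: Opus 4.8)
The plan is to show directly that the natural map $i=(\widehat{\pi_1},\widehat{\pi_2})$ is a continuous open bijection, working entirely with completely prime filters and the explicit description of $L\oplus L'$ from Definition \ref{prod_def}; this proves the statement for an arbitrary transitive $M$ without passing through forcing, though I will note the absoluteness shortcut that the surrounding remarks foreshadow. Write $p_1\colon L\to L\oplus L'$ and $p_2\colon L'\to L\oplus L'$ for the frame maps underlying the projections, so that $i(\mathcal{F})=(p_1^{-1}\mathcal{F},p_2^{-1}\mathcal{F})$, and for $\ell\in L$, $\ell'\in L'$ let $r(\ell,\ell')$ be the smallest element of $L\oplus L'$ containing $(\ell,\ell')$. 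Unwinding the definitions gives $r(\ell,\ell')=p_1(\ell)\wedge p_2(\ell')=\{(a,b)\mid a\leq\ell,\ b\leq\ell'\}$, so since completely prime filters are closed under finite meets and upward closure, $r(\ell,\ell')\in\mathcal{F}$ if and only if $\ell\in p_1^{-1}\mathcal{F}$ and $\ell'\in p_2^{-1}\mathcal{F}$. This single identity drives everything.

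Injectivity, together with a recipe for the inverse, comes from the observation that each $A\in L\oplus L'$ satisfies $A=\bigvee\{r(\ell,\ell')\mid(\ell,\ell')\in A\}$, a join indexed by the set $A\in M$. Applying complete primeness over $M$ to this join, $A\in\mathcal{F}$ if and only if $r(\ell,\ell')\in\mathcal{F}$ for some $(\ell,\ell')\in A$, i.e. if and only if there is $(\ell,\ell')\in A$ with $\ell\in p_1^{-1}\mathcal{F}$ and $\ell'\in p_2^{-1}\mathcal{F}$. Hence $\mathcal{F}$ is completely determined by the pair $i(\mathcal{F})$, so $i$ is injective, and the displayed formula tells us exactly how to reconstruct the putative preimage of a pair of points.

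For surjectivity, given completely prime filters $\mathcal{G}$ on $L$ and $\mathcal{G}'$ on $L'$ I would set $\mathcal{F}=\{A\mid\exists\,(\ell,\ell')\in A,\ \ell\in\mathcal{G}\text{ and }\ell'\in\mathcal{G}'\}$ and check it is a completely prime filter over $M$ with projections $\mathcal{G}$ and $\mathcal{G}'$. That it is a proper, upward- and meet-closed filter with the correct projections is routine. The one substantive point, which I expect to be the \emph{main obstacle}, is complete primeness: given $A=\bigvee\cU\in\mathcal{F}$ with $\cU\in M$, I must exhibit a member of $\cU$ in $\mathcal{F}$. Since $\bigvee\cU$ is computed in $M$ as the closure of $\bigcup\cU$ under the two saturation rules, I would run a transfinite induction on the stages of this closure, showing that any witness $(\ell,\ell')\in\bigvee\cU$ with $\ell\in\mathcal{G},\ell'\in\mathcal{G}'$ pulls back to a witness already in $\bigcup\cU$: at a successor stage a point $(\bigvee_i a_i,\ell')$ is added from a family $\langle a_i\rangle\in M$ with each $(a_i,\ell')$ present at an earlier stage, and complete primeness of $\mathcal{G}$ applied to $\bigvee_i a_i\in\mathcal{G}$ (a join over an $M$-family) returns some $a_{i_0}\in\mathcal{G}$, lowering the stage. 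The key is that the saturation is carried out inside $M$, so every witnessing family lies in $M$ and the complete-primeness hypotheses on $\mathcal{G},\mathcal{G}'$ genuinely apply.

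Bicontinuity is then immediate from the same identity: $i^{-1}(\widehat{\ell}\times\widehat{\ell'})=\widehat{r(\ell,\ell')}$ is basic open, so $i$ is continuous, while $i[\widehat{A}]=\bigcup_{(\ell,\ell')\in A}(\widehat{\ell}\times\widehat{\ell'})$ is open, so $i$ is open; a continuous open bijection is a homeomorphism. As an alternative to the inductive third paragraph, one can instead invoke the forcing-extension case recorded in Corollary \ref{prod_preserve} and transfer it to the arbitrary model $M$ by absoluteness: after collapsing so that $\mathcal{P}(L\oplus L')\cap M$ is countable, Remark \ref{Borel} renders ``$i$ is a homeomorphism'' a statement of low projective complexity in real parameters coding $M$, whose only nontrivial clause—surjectivity, the $\Pi^1_2$ assertion that every pair of points extends to a point of the product—is settled by Shoenfield absoluteness between the forcing extension and $V$. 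I favour the direct route since it makes the role of the $M$-saturation transparent, but the absoluteness route is the one signalled by the surrounding discussion and generalizes more uniformly to the separation axioms treated in later sections.
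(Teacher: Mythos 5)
Your primary argument is correct, and it takes a genuinely different route from the paper. The paper isolates the same substantive point you do---that the candidate filter generated by $\{\downarrow(\ell,\ell')\mid \ell\in x,\ \ell'\in y\}$ is completely prime over $M$---but proves it by forcing plus absoluteness: surjectivity is first obtained in forcing extensions of $M$ via the adjunction (Proposition \ref{adj}, Corollary \ref{prod_preserve}), then the assertion, phrased for the \emph{canonical} candidate preimage rather than as an existential statement, is observed to be $\boldsymbol\Pi^1_1$ and transferred to $V$ by Mostowski absoluteness after a collapse, using that any failure is upwards absolute. Your transfinite induction on the stages of the saturation of $\bigcup\cU$ inside $M$, applying complete primeness of $\mathcal{G}$ and $\mathcal{G}'$ at each one-sided-join stage, replaces all of this with a direct combinatorial argument; it is in fact precisely the ``tedious'' direct proof that the paper's remark immediately following Proposition \ref{prod_pres} sketches and declines to carry out. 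What your route buys is independence from the machinery of Section \ref{force_section}; what the paper's route buys is brevity given the adjunction, and a template reused for the separation-property results later. One cosmetic point: with Definition \ref{prod_def} as stated, the empty-join instances of the saturation rules force every element of $L\oplus L'$ to contain $(L\times\{0\})\cup(\{0\}\times L')$, so $r(\ell,\ell')$ is $\{(a,b)\mid a\leq\ell,\ b\leq\ell'\}$ together with these degenerate pairs; this changes nothing in your argument, since a pair with a zero coordinate never has both coordinates in the (proper) filters $\mathcal{G},\mathcal{G}'$.

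Your \emph{alternative} route, however, is not the paper's argument and has a genuine gap as stated. Casting surjectivity as the $\Pi^1_2$ assertion ``every pair of points extends to a point of the product'' and invoking Shoenfield gets the direction of absoluteness wrong: Corollary \ref{prod_preserve} gives the statement in $M[G]$, and $\Pi^1_2$ truths transfer \emph{downward} from a larger transitive model to a smaller one, not upward from $M[G]$ to $V[G]$ or $V$; moreover Shoenfield absoluteness between $M[G]$ and $V[G]$ would require $\omega_1^{V[G]}\subseteq M[G]$, which fails outright when $M$ is countable. This is exactly why the paper eliminates the existential quantifier over preimage points in favor of the definable canonical candidate, making the assertion $\boldsymbol\Pi^1_1$ so that Mostowski absoluteness (valid between arbitrary transitive models containing the parameters) applies in the needed direction. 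Since your direct proof needs none of this, the proposal stands; just drop or repair the absoluteness aside.
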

    \begin{proof}
        We first show $i$ is a surjection. 
        First, we note that $i(x)$ has as its first coordinate $\{\ell\in L\mid \downarrow(\ell,1)\in x\}$ and similarly for the second coordinate. 
        Since sets of the form $\downarrow(a,b)$ form a basis for $L\oplus L'$, whenever $x\in\widehat{L}$ and $y\in\widehat{L'}$, if it exists then the point in $\widehat{L\oplus L'}$ mapping to $(x,y)$ in $\widehat{L}\times \widehat{L'}$ must be the upwards closure of $\{\downarrow(\ell,\ell')\mid \ell\in x, \ell'\in y\}$. 
        What we must verify is that this is indeed a point of $\widehat{L\oplus L'}$; that is to say, this set defines a completely prime filter on $L\oplus L'$.
        
        Let $G$ be $V$-generic for $\operatorname{Coll}(\omega,(2^{|L|})^M+(2^{|L'|})^M)$. 
        Then in $M[G]$, since the product of locales interprets to the product of spaces in all forcing extensions, the assertion 
        \[\forall x\in\widehat{L}\;\forall y\in\widehat{L'}\;\forall\ell\in x\;\forall \ell'\in y\;\forall\cU\in L\oplus L'\;\left(\bigvee\cU=(\ell,\ell')\to\exists l_1\in x,l_2\in y,W\in\cU((l_1,l_2)\leq W)\right)\]
        holds as it asserts that any pair of points in $\widehat{L}$ and $\widehat{L'}$ induce a point in $L\oplus L'$. 
        By remark \ref{Borel}, this assertion is $\boldsymbol\Pi^1_1$ and therefore true in $V[G]$ by Mostowski absoluteness. 
        Note that any failure is upwards absolute, so it must hold in $V$; that is, $i$ is surjective. 

        For injectivity, suppose $x,y$ are distinct elements of $\widehat{L\oplus L'}$; since $\{(\ell,\ell')\mid \ell\in L,\ell'\in L'\}$ restricts to a cover of any set, we may find $(\ell,\ell')\in x\triangle y$; say $(\ell,\ell')\in x\setminus y$. 
        Then $i(x)\in\widehat{\ell}\times\widehat{\ell'}$ but $i(y)\not\in \widehat{\ell}\times\widehat{\ell'}$ so $x\neq y$.

        Finally, for any $\ell\in L$ and $\ell'\in L'$, the preimage of the basic open set $(\widehat{\ell},\widehat{\ell'})$ is $\widehat{(\ell,\ell')}$ and an identical argument to surjectivity yields that also the image of $\widehat{(\ell,\ell')}$ is $(\widehat{\ell},\widehat{\ell'})$. 
        As a bijective continuous open map, $i$ is a homeomorphism. 
    \end{proof}
    An analogous result holds more generally for limits in $M$, though some more elaborate formulae in the application of Mostowski absoluteness. 
    \begin{remark}
        At least for infinite products, Proposition \ref{prod_pres} requires that $M$ is well-founded; see \cite[Example 5.7]{Zap}.
    \end{remark}
    \begin{remark}
        It is not hard to show directly that if $(X,\pi)$ is a preinterpretation of $L$ and $(Y,\rho)$ is a preinterpretation of $L'$ then $(X\times Y,\sigma)$ is a preinterpretation of $L\oplus L'$, where $\sigma(U)=\bigcup_{(\ell,\ell')\in U}\widehat{\ell}\times\widehat{\ell'}$. 
        The main point in mapping joins to unions is that a single one sided join as in Definition \ref{prod_def} does not change the union and the join in $L\oplus L'$ can be constructed by transfinitely adding these one-sided joins. 
        A tedious argument along these lines should lead to a direct proof of Proposition \ref{prod_pres}.
    \end{remark}

\section{Globally defined spaces} \label{glob_section}
We now define a notion of spaces defined in all forcing extensions; examples include all locales, all universally Baire sets of reals, the interpretations of Hausdorff spaces of \cite{Frem}, and more. 
It is plausible that there are more axioms one should impose for what spaces defined in all forcing extensions should satisfy, though they should at least satisfy the following definition:
\begin{definition}
    A \emph{globally defined space} $X$ consists of 
    \begin{itemize}
        \item for each forcing $\mathbb{P}$, a name $\dot X_\mathbb{P}$ for a topological space;
        \item for each $\dot H$ a $\mathbb{P}$ name for a $\mathbb{Q}$ generic filter, a $\mathbb{P}$ name for a function $\dot f_{\mathbb{P},\dot H}^X\colon \dot X_\mathbb{Q}(\dot H)\to \dot X_{\mathbb{Q}}$
    \end{itemize}
    satisfying
    \begin{enumerate}
        \item if $G$ is the canonical name for an $\mathbb{P}$ generic filter then $\Vdash_{\mathbb{P}} f_{\mathbb{P},G}^X=\operatorname{id}$;
        \item $\mathbb{P}$ forces that $\dot f_{\mathbb{P},\dot H}^X$ is an embedding from the space with underlying set $\dot X_{\mathbb{Q}}(\dot H)$ with topology generated by the open sets in $V[H]$;
        \item the $f_{\mathbb{P},\dot H}^X$ are compatible in the sense that if $\dot H$ is an $\mathbb{P}$ name for a $\mathbb{Q}$ generic filter and $\dot I$ is a $\mathbb{Q}$ name for an $\mathbb{R}$ generic filter then $\mathbb{P}$ forces the following diagram commutes:
        \begin{center}
\begin{tikzcd}
                                                             & X_{\mathbb{P}} &                                                                                                                             \\
X_{\mathbb{Q}}(\dot H) \arrow[ru, "{f_{\mathbb{P}^X,\dot H}}"] &                & X_{\mathbb{R}}(\dot I(\dot H)) \arrow[lu, "{f_{\mathbb{P}^X,\dot I(\dot H)}}"'] \arrow[ll, "{f_{\mathbb{Q}^X,\dot I}(\dot H)}"]
\end{tikzcd}
        \end{center}
    \item for any forcing $\mathbb{P}$ and any $\mathbb{P}$ names $\dot{\mathbb{Q}},\dot{\mathbb{R}}$ for forcings, $\mathbb{P}\ast(\dot{\mathbb{Q}}\times\dot{\mathbb{R}})$ forces that the following is a pullback diagram for $G*(H_0\times H_1)$ the canonical decomposition of the generic filter:
    \begin{center}
\begin{tikzcd}
X_{\mathbb{P}}(\dot G) \arrow[rr] \arrow[dd] \arrow[dr, phantom, "\lrcorner", very near start] &  & X_{\mathbb{P}*\dot {\mathbb{Q}}}(\dot G*\dot H_0) \arrow[dd]  \\
                                             &{}  &                                             \\
X_{\mathbb{P}*\dot{\mathbb{R}}}(\dot G*\dot H_1) \arrow[rr]   &  & X_{\mathbb{P}*(\dot{\mathbb{Q}}\times\dot{\mathbb{R}})}(\dot G*(H_0\times H_1))
\end{tikzcd}
\end{center} \label{mutual_genericity}
If $X$ and $Y$ are globally defined spaces then a map $g\colon X\to Y$ consists of names for continuous functions $g_{\mathbb{P}}\colon X_{\mathbb{P}}\to Y_{\mathbb{P}}$ for each forcing $\mathbb{P}$ such that whenever $H$ is a $\mathbb{P}$ name for a $\mathbb{Q}$ generic filter, $\mathbb{P}$ forces that the following diagram commutes:
\begin{center}
\begin{tikzcd}
X_{\mathbb{P}}(\dot G) \arrow[r, "g_{\mathbb{P}}(\dot G)"]                                        & Y_{\mathbb{Q}}(\dot G)                                        \\
X_{\mathbb{Q}}(\dot H) \arrow[u, "{f_{\mathbb{P},\dot H}^X}"] \arrow[r, "g_{\mathbb{Q}}(\dot H)"] & Y_{\mathbb{Q}}(\dot H) \arrow[u, "{f_{\mathbb{P},\dot H}^Y}"]
\end{tikzcd}
\end{center}
We identify two such maps $g,h$ if for each $\mathbb{P}$, $\Vdash_{\mathbb{P}}g=h$. 

    \end{enumerate}
    
\end{definition}
\begin{remark}
    Item (\ref{mutual_genericity}) is an assertion of mutual genericity and asserts that the part of $X$ added by $G$ is the intersection of the parts added by $G*H_0$ and $G*H_1$. 
    Interestingly, this axiom excludes several interesting potential examples, including setting $X_\mathbb{P}$ to be $\omega_1^{V[G]}$ in the order topology: if both tail forcings collapse $\omega_1$ then the diagram fails to be a pullback. 
    It may be interesting to weaken item (\ref{mutual_genericity}) to require additional hypothesis on the tail forcings to allow for such examples. 
    None of our results depend on (\ref{mutual_genericity}) in any way.
\end{remark}
The main result of this section is the following
\begin{proposition}
    Interpretation gives a full, faithful, continuous embedding from locales to globally defined spaces.
\end{proposition}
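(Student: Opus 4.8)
The plan is to reduce the statement to four separate assertions about the functor $L\mapsto\widehat L$ from $\operatorname{Loc}$ into globally defined spaces: that it is well defined (a locale map $\phi\colon L\to L'$ induces a map of globally defined spaces $\widehat\phi$), faithful, full, and continuous (limit preserving), the last of which will also yield that it is an embedding. For well-definedness I would take the transition maps $f^{\widehat L}_{\mathbb P,\dot H}$ to be the natural inclusions $\widehat L^{V[H]}\hookrightarrow\widehat L^{V[G]}$: a completely prime filter over $V$ computed in an inner model is still one in the outer model, and the basic opens $\widehat\ell$ are defined uniformly, so the embedding and compatibility axioms are immediate, while the mutual–genericity axiom (\ref{mutual_genericity}) follows from the product–preservation reasoning of Proposition \ref{prod_pres}. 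A locale map $\phi$ induces $\widehat\phi(\mathcal F)=\phi^{-1}\mathcal F$ in every extension, and since this formula never refers to the generic it commutes with all the inclusions, giving a map of globally defined spaces.

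Faithfulness and continuity are the easy half. For faithfulness, if $\phi\neq\psi$ as frame maps then by item (\ref{neq}) of Proposition \ref{l_struct} the maps $\widehat\phi,\widehat\psi$ already differ in the $\operatorname{Coll}(\omega,\lambda)$ component, so they differ as globally defined maps. For continuity, limits in the category of globally defined spaces are computed componentwise, and in each forcing component $\widehat{(-)}$ is exactly the right adjoint $\operatorname{Loc}\to\operatorname{Top}^{\mathbb B}$ of Proposition \ref{adj}; right adjoints preserve limits, and Corollary \ref{prod_preserve} identifies the interpretation of a limit of locales with the limit of interpretations, so $\widehat{(-)}$ is continuous. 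Granting that the functor is full and faithful, it then reflects isomorphisms, and by item (\ref{homeo}) of Proposition \ref{l_struct} non-isomorphic locales have interpretations failing to be homeomorphic already in a collapse component; hence distinct isomorphism classes of locales have distinct interpretations, so the functor is an embedding onto its essential image.

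The substance is fullness. Given a map $g\colon\widehat L\to\widehat{L'}$ of globally defined spaces, I would reconstruct a frame map $\phi\colon L'\to L$ by setting
\[\phi(\ell')=\bigvee\{\,\ell\in L\mid \widehat\ell\subseteq g^{-1}\widehat{\ell'}\,\},\]
the ``ground–model interior'' of the preimage; equivalently, feeding the canonical generic point $\dot G$ of $\widehat L$ over the Boolean completion $\mathbb B_L$ of $L$ into $g$ produces, via Proposition \ref{pt-map}, a frame map $L'\to\mathbb B_L$, of which $\phi$ is the factorization through $L\hookrightarrow\mathbb B_L$. One then checks that $\phi$ preserves finite meets and arbitrary joins, and that, because joins indexed by ground–model sets are preserved by $\widehat{(-)}$ (the defining property of completely prime filters over $V$), one has $\widehat{\phi(\ell')}=\bigcup\{\widehat\ell\mid \widehat\ell\subseteq g^{-1}\widehat{\ell'}\}$. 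Verifying $\widehat\phi=g$ then reduces, via items (\ref{sym}) and (\ref{neq}) of Proposition \ref{l_struct}, to the equality of open sets $g^{-1}\widehat{\ell'}=\widehat{\phi(\ell')}$ in every extension.

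The hard part is precisely this last equality: that the preimage $g^{-1}\widehat{\ell'}$ is coded by a single ground–model open set and in particular does not genuinely depend on the generic. This can fail for an arbitrary continuous map in a fixed extension — indeed Proposition \ref{adj} only lands such a map in the namespace locale $L_{\widehat L}$, which is in general strictly larger than $L$ — so the argument must exploit the full coherence of a globally defined space. I expect to run it through the mutual–genericity axiom (\ref{mutual_genericity}): given two mutually generic collapses $G_0,G_1$, the pullback condition forces $\widehat L^{V[G_0]}\cap\widehat L^{V[G_1]}=\widehat L^{V}$, and combined with the density of $\widehat L^{V[H]}$ in every larger interpretation (from item (\ref{nonz}) of Proposition \ref{l_struct}) this should pin the truth value of ``$\widehat\ell\subseteq g^{-1}\widehat{\ell'}$'' to a ground–model fact, so that the set defining $\phi(\ell')$ is the same across all extensions. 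Making this absoluteness argument precise — ruling out that passing to a larger extension introduces new points of $\widehat\ell$ escaping $g^{-1}\widehat{\ell'}$ — is the main obstacle, and is where essentially all of the force of the globally-defined-space axioms is used.
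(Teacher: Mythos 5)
Your decomposition (faithfulness from item (\ref{neq}) of Proposition \ref{l_struct}, continuity from Proposition \ref{adj} plus pointwise limits) and your candidate frame map for fullness, $\phi(\ell')=\bigvee\{\ell\in L\mid\widehat\ell\subseteq g^{-1}\widehat{\ell'}\}$, coincide with the paper's. You also correctly isolate the crux: one must show that the set $\{\ell\mid\widehat\ell\subseteq g^{-1}\widehat{\ell'}\}$ is a ground-model set, not genuinely dependent on the generic. But you leave precisely this step unproven, and the route you propose for it --- the pullback axiom (\ref{mutual_genericity}) --- is not the mechanism that closes it, and as sketched it cannot: mutual genericity gives upward persistence of a \emph{failure} of the inclusion (a witnessing point $x\in\widehat\ell^{V[G_1]}$ with $g(x)\notin\widehat{\ell'}$ remains a witness in $V[G_0\times G_1]$, since $g$ commutes with the identity transition maps), but it gives no way to propagate the \emph{truth} of the inclusion from $V[G_0]$ up to $V[G_0\times G_1]$, which is exactly the ``new points escaping'' problem you flag. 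Comparing two mutually generic extensions therefore never yields the needed contradiction. The paper in fact remarks explicitly that none of its results depend on axiom (\ref{mutual_genericity}).

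The paper's actual mechanism is homogeneity, not mutual genericity. Work with $\mathbb{P}_\lambda=\operatorname{Coll}(\omega,\lambda)$ for $\lambda\geq|L|+|L'|$. Because the transition maps of interpretations are identities, the compatibility axiom for morphisms of globally defined spaces forces $\dot g_{\mathbb{P}_\lambda}(\sigma(\dot G))=\dot g_{\mathbb{P}_\lambda}(\dot G)$ for every $\sigma\in\operatorname{Aut}(\mathbb{P}_\lambda)$, i.e.\ the name $\dot g_{\mathbb{P}_\lambda}$ is a fixed point of the automorphism action; weak homogeneity of the Levy collapse then makes each statement ``$\widehat\ell\subseteq g_{\mathbb{P}_\lambda}^{-1}\widehat{\ell'}$'' decided by every condition, so the defining set of $\phi$ is a ground-model function. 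From there, openness of $g_{\mathbb{P}_\lambda}^{-1}\widehat{\ell'}$ gives $g_{\mathbb{P}_\lambda}^{-1}\widehat{\ell'}=\widehat{\phi(\ell')}$ and hence $g_{\mathbb{P}_\lambda}=\widehat\phi$, after a (needed, and in the paper nontrivial) verification that $\phi$ preserves arbitrary joins and finite meets. Finally, your sketch says nothing about arbitrary forcings $\mathbb{P}$: the paper handles them by collapse absorption, writing $\mathbb{P}_\lambda\cong\mathbb{P}\ast\dot{\mathbb{P}}_\lambda$ for $\lambda\geq|\mathbb{P}|+|L|+|L'|$ and using the compatibility diagram together with item (\ref{neq}) of Proposition \ref{l_struct} to see that $g_{\mathbb{P}}$ is induced by the same $\phi$. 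So the gap is real: without the homogeneity argument (or a genuine substitute for it), your $\phi$ is not known to be a ground-model map at all, and the rest of your fullness argument has nothing to stand on.
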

\begin{proof}
    Faithfulness is immediate from item \ref{neq} of Proposition \ref{l_struct} and limit preservation follows from Proposition \ref{adj} plus limits of globally defined spaces being computed pointwise. 
    To see that interpretation is full, suppose $g\colon L\to M$ is a morphism of globally defined spaces. 
    Note that, when viewed as the space of completely prime filters over the ground model, each $f_{\mathbb{P},H}$ is the identity map. 
    In particular, for each $\mathbb{P}$ and each $\sigma\in\operatorname{Aut}(\mathbb{P})$, $\Vdash g_\mathbb{P}=\sigma(g_{\mathbb{P}})$. 
    Consider now $\mathbb{P}_\lambda=\operatorname{Coll}(\omega,\lambda)$ for some $\lambda\geq|L|+|M|$ and let $h_\lambda$ be the function 
    \[m\mapsto\{\ell\in L\mid \widehat{\ell}\subseteq g_{\mathbb{P}_\lambda}^{-1}\widehat{m}\}.\]
    Note that whenever $\sigma\in\operatorname{Aut}(\mathbb{P}_\lambda)$, the following diagram is forced to commute: 
    \begin{center}
\begin{tikzcd}
{\widehat{L}^{V[\sigma(G)]}} \arrow[rr, "\dot g_{\mathbb{P}}(\sigma(\dot G))"] &  & {\widehat{M}^{V[\sigma(G)]}}        \\
{\widehat{L}^{V[G]}} \arrow[rr, "\dot g_{\mathbb{P}}(G)"] \arrow[u, "="]       &  & {\widehat{M}^{V[G]}} \arrow[u, "="]
\end{tikzcd}
    \end{center}
    In particular, $\sigma^{-1}(\dot g_{\mathbb{P}_\lambda})( G)=\dot g_{\mathbb{P}_\lambda}(\sigma(G))=\dot g_{\mathbb{P}_\lambda}(G)$ so $h_{\lambda}$ is a fixed point of the action of $\operatorname{Aut}(\mathbb{P}_\lambda)$ on the space of names. By weak homogeneity of the Levy collapse, whenever $m\in M$ and $\ell\in L$, there is a $p$ forcing $\widehat{\ell}\in h_\lambda(m)$ if and only if every condition forces $\widehat{\ell}\in h_\lambda(m)$. 
    In particular, $h_\lambda$ is a ground model function and is independent of the generic filter. 
    Let $h_\lambda^*\colon L\to M$ have the corresponding frame map $m\mapsto\bigvee h_\lambda(m)$.
    \begin{claim}
        $h_\lambda^*$ is a locale map.
    \end{claim}
    \begin{proof}
        For preservation of arbitrary joins, suppose $\{m_i\mid i\in I\}$ are in $M$. 
        Trivially, $\bigvee_{i\in I}h_\lambda^*(m_i)\leq h_\lambda^*\left(\bigvee_{i\in I}m_i\right)$; suppose for contradiction the inequality was strict. 
        Then by item \ref{sym} of Proposition \ref{l_struct}, after forcing with $\mathbb{P}_\lambda$ there is an $x\in \widehat{h_\lambda^*\left(\bigvee_{i\in I}m_i\right)}\setminus \widehat{\bigvee_{i\in I}h_\lambda^*(m_i)}$. 
        Then $g_{\mathbb{P}_\lambda}(x)\in \widehat{\bigvee_{i\in I}m_i}$ but not in $\widehat{m_i}$ for any $i$, contradicting that $\{\widehat{m_i}\mid i\in I\}$ is a cover of $\widehat{\bigvee_{i\in I}m_i}$. 

        For preservation of finite meets, note that for the empty meet $1_L\in h_\lambda(1_M)$ so $h_\lambda^*(1_M)=1_L$. 
        For binary meets, whenever $m,m'\in M$, we see
        \[\begin{aligned}
            h_\lambda^*(m\wedge m')&=\bigvee h_\lambda(m\wedge m')\\
            &=\bigvee(h_\lambda(m)\cap h_\lambda(m'))\\
            &=\bigvee_{\substack{\ell\in h_\lambda(m)\\\ell'\in h_{\lambda}(m')}}\ell\wedge \ell'\\
            &=\left(\bigvee_{\ell\in h_{\lambda}(m)}\ell\right)\wedge \left(\bigvee_{\ell\in h_{\lambda}(m')}\ell\right)\\
            &=h_\lambda^*(m)\wedge h_\lambda^*(m')
        \end{aligned}\]
        for the first equality is the definition of $h_\lambda^*(m\wedge m')$, the second is observation that by definition of $h_\lambda$ that $h_\lambda(m\wedge m')=h_\lambda(m)\cap h_\lambda(m')$, the third is rewriting the terms of the join using that every set in the range of $h_\lambda$ is downwards closed, the fourth is distributivity coming from being a frame, and the last is the definition of $h_\lambda^*(m)$ and $h_\lambda^*(m')$.

    \end{proof}
    We now show that $h_\lambda^*$ induces $g$ for any $\lambda\geq|L|+|M|$. 
    Note first that $g_{\mathbb{P}_\lambda}=\widehat{h_\lambda^*}$.

Given any forcing $\mathbb{P}$, fix $\lambda\geq |\mathbb{P}|+|M|+|L|$. 
Then $\mathbb{P}_\lambda$ decomposes as a two step iteration $\mathbb{P}*\mathbb{P}_\lambda$; if $G*H$ is the name for the decomposition of the $\mathbb{P}_\lambda$ generic filter then $\mathbb{P}_\lambda$ forces that the following diagram commutes: 
\begin{center}
\begin{tikzcd}
{\widehat{L}^{V[G*H]}} \arrow[rr, "{\widehat{h_{\lambda}^*}^{V[G*H]}}"] && {\widehat{M}^{V[G*H]}}               \\
{\widehat{L}^{V[G]}} \arrow[rr, "g_{\mathbb{P}}(G)"] \arrow[u, hook]     && {\widehat{M}^{V[G]}} \arrow[u, hook]
\end{tikzcd}
\end{center}
In particular, $\Vdash_{\mathbb{P}_\lambda}g_{\mathbb{P}}(G)=\widehat{h_{\lambda}^*}$. 
Note in particular that if $\mathbb{P}=\operatorname{Coll}(\omega,\mu)$ for some $\mu\leq \lambda$ then $h_\mu^*=h_\lambda^*$ by item \ref{neq} of Proposition \ref{l_struct}. Since this holds for all $\mathbb{P}$, $g$ is indeed the map induced by $h_\lambda^*$ for some (equivalently all) sufficiently large value of $\lambda$. 
\end{proof}

\section{Interpretations of sufficiently complete spaces} \label{comp_section}

In this section, we show that completely metrizable spaces interpret to their metric completions under any appropriate metric and that locally compact Hausdorff spaces interpret to the space of germs at the generic filter. 
We first show that complete metric spaces interpret to complete metric spaces; this appears already as \cite[Corollary 5.5]{Zap} though we offer an alternative proof. 
We first provide a proof that holds only for forcing extensions and then note that some use of Mostowski absoluteness allows us to generalize to all transitive models of set theory. 

\begin{proposition} \label{complete_met}
    Suppose $X$ is completely metrizable and $d$ is any complete metric on $X$. 
    $\widehat{{\Omega}X}$ is homeomorphic to the $d$-completion of $X$ in any forcing extension. 
\end{proposition}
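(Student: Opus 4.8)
The plan is to identify the points of $\widehat{\mathcal{O}X}$ in a forcing extension $V[G]$ with Cauchy filters of the ground model metric structure, and show this reproduces the $d$-completion. First I would recall that since $X$ is completely metrizable and $d$ is a complete metric, the open balls $B_d(q,\varepsilon)$ for $q\in X$ and rational $\varepsilon>0$ form a basis for $\mathcal{O}X$. A point of $\widehat{\mathcal{O}X}$ is a completely prime filter $\mathcal{F}$ over $V$; the strategy is to extract from $\mathcal{F}$ a descending sequence of basic open balls whose radii tend to zero, i.e. a genuine Cauchy point of the completion. The key observation is that since $X$ is second countable in the relevant sense (the collection of basic balls with rational data lies in $V$ and forms a cover refining any open set), complete primeness over $V$ forces $\mathcal{F}$ to meet every such $V$-cover, so $\mathcal{F}$ must contain balls of arbitrarily small radius.

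The main steps I would carry out are as follows. First, construct a map $\Phi$ from $\widehat{\mathcal{O}X}$ to the $d$-completion $\overline{X}$: given $\mathcal{F}\in\widehat{\mathcal{O}X}$, use complete primeness applied to the cover of $X$ by $\varepsilon$-balls (which lives in $V$) to select, for each $n$, a basic ball $B_d(q_n,2^{-n})\in\mathcal{F}$; since $\mathcal{F}$ is a filter these balls pairwise intersect, forcing $\langle q_n\rangle$ to be $d$-Cauchy, and I set $\Phi(\mathcal{F})$ to be its limit in $\overline{X}$. I would check $\Phi$ is well-defined independent of the choices (any two such selections interleave into a single Cauchy sequence with the same limit). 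Second, construct the inverse: given a point $z\in\overline{X}$, the filter $\mathcal{F}_z=\{U\in\mathcal{O}X\mid z\in\overline{U}^{\,\overline{X}}\text{ with }z\text{ interior}\}$—more carefully, the filter generated by $\{U\mid B_d(z,\varepsilon)\subseteq U\text{ for some }\varepsilon\}$ computed in $\overline{X}$ and traced back to $\mathcal{O}X$—is completely prime over $V$. Here complete primeness over $V$ is exactly what must be verified: if $\bigvee\mathcal{U}=U$ with $\mathcal{U}\in V$ and $U\in\mathcal{F}_z$, then some small ball around $z$ sits inside $U=\bigcup\mathcal{U}$, and by a Lebesgue-number/Baire-type argument some single member of $\mathcal{U}$ already contains a smaller ball around $z$. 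Third, verify $\Phi$ is a homeomorphism: continuity and openness follow since $\Phi^{-1}(B_d(q,\varepsilon)^{\overline{X}})=\widehat{B_d(q,\varepsilon)}$, matching basic opens on both sides.

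I expect the main obstacle to be verifying complete primeness over $V$ of the candidate filter $\mathcal{F}_z$, i.e. that a filter built from a point $z$ of the completion genuinely meets every $V$-indexed cover. The subtlety is that $z$ may be a new point of the extension not present in $X^V$, so I cannot simply say $z\in U$ for some member of the cover; instead I must argue at the level of basic balls with ground-model centers and rational radii. The right tool is that, because $d$ is a \emph{complete} metric and the basis is coded in $V$, any $V$-cover $\mathcal{U}$ of a ground-model ball $B$ can be refined: the function sending each $V$-point to a member of $\mathcal{U}$ containing a definite-radius ball around it is a ground-model object, and completeness guarantees the Cauchy sequence approximating $z$ eventually lands inside one fixed member of $\mathcal{U}$. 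This uses precisely that the metric is complete rather than merely that the topology is completely metrizable, which is why the statement is phrased for an arbitrary complete $d$. Finally, I would remark that by Remark \ref{Borel} the assertion ``$\widehat{\mathcal{O}X}$ is the $d$-completion'' is expressible with quantifiers only over $L$ and $\mathcal{P}(L)\cap M$, so once established in collapse extensions it transfers to arbitrary transitive $M$ by Mostowski absoluteness, exactly as in the proof of Proposition \ref{prod_pres}.
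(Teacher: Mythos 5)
Your overall architecture is the same as the paper's: from a completely prime filter $\mathcal{F}$ extract balls $B_{2^{-n}}(q_n)\in\mathcal{F}$ via the ground-model covers by small balls, get a Cauchy sequence, and map to its class; conversely, from a point $z$ of the completion form the filter of ground-model open sets containing a definite-radius ball around $z$; then check these are mutually inverse homeomorphisms. The gap is exactly at the step you flag as the crux, and your proposed resolution does not close it. You argue: the function sending each $V$-point $x$ to a pair $(W_x,r_x)$ with $W_x\in\mathcal{U}$ and $B_{r_x}(x)\subseteq W_x$ is a ground-model object, and ``completeness guarantees the Cauchy sequence approximating $z$ eventually lands inside one fixed member of $\mathcal{U}$.'' But nothing rules out that the radii $r_{x_\alpha}$ shrink to $0$ along the approximating sequence $x_\alpha\to z$, in which case the members $W_{x_\alpha}$ keep changing and no single member need contain a ball around $z$; ruling this out \emph{is} the statement to be proved, so the argument as written is circular. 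Moreover, the completeness of $d$ that you invoke is completeness in $V$, which applies only to Cauchy sequences lying in $V$; the sequence $(x_\alpha)$ approximating a new point $z$ exists only in $V[G]$, so completeness cannot be applied to it directly, and its limit $z$ is not a point of $X^V$ at which the ground-model cover could be evaluated. That this step is genuinely delicate can be seen from the fact that the analogous claim is false for incomplete metrics (e.g.\ for $X=\mathbb{Q}$ and $z$ a random real, the filter $\mathcal{F}_z$ fails to be completely prime over $V$: a $V$-cover of $\mathbb{Q}$ by intervals of summable lengths avoiding a positive-measure set has no member containing a ball around $z$), yet your refinement-function argument never distinguishes the two situations in a load-bearing way.

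What is missing is a mechanism for converting the $V[G]$-Cauchy sequence into ground-model Cauchy sequences, and this is precisely what the paper's density argument supplies. Working with a name $\dot z=[(\dot x_\alpha)]$ and a condition $p$ forcing $U\in\mathcal{F}_{\dot z}$ with depth $\varepsilon$, one builds in $V$ a descending sequence of conditions $p_0\geq p_1\geq\cdots$ below $p$, where $p_\alpha$ decides $\dot x_\alpha$ to be some $y_\alpha\in X^V$ and forces a quantitative Cauchy estimate on the tail. The decided sequence $(y_\alpha)$ then lies in $V$ and is Cauchy there, so by completeness of $d$ \emph{in the ground model} it converges to some $y\in X^V$; the forced $\varepsilon$-depth gives $y\in U$, hence $B_\delta(y)\subseteq W$ for some $W\in\mathcal{U}$ and $\delta>0$, and a condition $p_\alpha$ far enough along forces $W\in\mathcal{F}_{\dot z}$. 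Since such conditions are dense below $p$, genericity yields complete primeness. (Alternatively, as the paper notes in Corollary \ref{complete_wf}, once this is established by forcing one can transfer it to arbitrary transitive models by Mostowski absoluteness; but the absoluteness transfer presupposes, rather than replaces, the forcing argument.) So your proposal needs this names-and-density step, or an equivalent substitute, inserted at the point where you verify that $\mathcal{F}_z$ meets every $V$-cover.
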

\begin{proof}
    We denote the $d$-completion of $X$ by $\overline{(X,d)}$ and work in some forcing extension $V[G]$. 
    For each $\mathcal{F}\in\widehat{{\Omega}X}$ and $\alpha<\omega$, pick $x_\alpha^{\mathcal{F}}$ such that $B_{1/\alpha}(x_\alpha^{\mathcal{F}})\in\mathcal{F}$. 
    Note that since $\mathcal{F}$ is a filter, for each $\alpha,\beta$, $B_{1/\alpha}(x_\alpha^{\mathcal{F}})\cap B_{1/\beta}(x_\beta^{\mathcal{F}})\neq\varnothing$. 
    In particular, $d(x_\alpha,x_\beta)<\frac1\alpha+\frac1\beta$ so $(x_\alpha^{\mathcal{F}})_{\alpha<\omega}$ is $d$-Cauchy. Let $f\colon\widehat{X}\to\overline{(X,d)}$ be the function $f(\mathcal{F})=[(x_\alpha^{\mathcal{F}})_{\alpha<\omega}]$. 
    Note that $f(\mathcal{F})$ does not depend on the choice of $x_\alpha^{\mathcal{F}}$ since if $B_{1/\alpha}(x),B_{1/\alpha}(y)\in\mathcal{F}$ then since $\mathcal{F}$ is a filter, $B_{1/\alpha}(x)\cap B_{1/\alpha}(y)\neq\varnothing$ and $d(x,y)<\frac2\alpha$.
    We claim $f$ is a homeomorphism. 
    
    First, we show 
    \begin{claim}
        $f$ is continuous. 
    \end{claim}
    \begin{proof}
    Given $\mathcal{F}\in\widehat{X}$ and rational $\varepsilon>0$, let $\alpha=\lceil\frac{4}{\varepsilon}\rfloor$. 
    Then $B_{1/\alpha}(x_\alpha^\mathcal{F})\in\mathcal{F}$ and whenever $B_{1/\alpha}(x_n^\mathcal{F})\in\mathcal{G}$, we see $d(x_\alpha^{\mathcal{F}},x_\alpha^{\mathcal{G}})<2/\alpha$. 
    In particular, 
    \[\begin{aligned}
        \widehat{d}(f(\mathcal{F}),f(\mathcal{G}))&\leq \widehat{d}(f(\mathcal{F}),x_\alpha^{\mathcal{F}})+\widehat{d}(x_\alpha^{\mathcal{F}},x_\alpha^{\mathcal{G}})+d(x_\alpha^{\mathcal{G}}, f(\mathcal{G}))\\
        &<\frac1\alpha+\frac{2}{\alpha}+\frac1\alpha\\
        &\leq\varepsilon
    \end{aligned}\]
    as desired. 
    \end{proof}

    We now construct the inverse to $f$. 
    Given $x=[(x_\alpha)]$, let $g(x)=\{U\in V\mid\exists\varepsilon>0\exists N\forall \alpha>N(B_\varepsilon(x_\alpha)\subseteq U)\}$; that is, $g(x)$ is all those ground model open sets $U$ that some (equivalently any) Cauchy sequence converging to $x$ is eventually further than some $\varepsilon$ away from the complement of $U$; note this definition does not depend on the choice of representative. 
    \begin{claim}
        $g(x)\in\widehat{X}$. 
    \end{claim}
    \begin{proof}
        This is a density argument. Given names $\dot x_\alpha$ for a Cauchy sequence and a condition $p$ forcing that $U\in g([(\dot x_\alpha)])$ with witness $\varepsilon$ and an open cover $\mathcal{U}$ of $U$, find $p\geq p_0\geq p_1\geq\ldots$ such that 
        \begin{itemize}
            \item $p_\alpha$ decides $\dot x_\alpha$ to be some $y_\alpha\in X$
            \item for some $N_\alpha$, $p_\alpha\Vdash\forall \beta,\gamma>N_\alpha d(\dot x_\beta,x_\gamma)<\varepsilon$. 
        \end{itemize}
        Then $(y_\alpha)$ is $d$-Cauchy and so converges to some $y$ by completeness of $(X,d)$. 
        Then $d(y,X\setminus U)\geq \varepsilon$ so $y\in U$. 
        Since $\mathcal{U}$ is a cover of $U$, let $W\in \cU$ and $\delta>0$ be such that $B_\delta(y)\subseteq W$. 
        If $\alpha>\max(4/\delta,N_{4/\delta})$ then $p_\alpha$ forces that $W\in g(x)$ as $p_\alpha\Vdash d(x_\beta,x_\alpha)\leq\frac{\delta}{4}$ for all $\beta>\alpha$ and $B_{\delta/2}(x_\alpha)\subseteq W$. 
    \end{proof}
    \begin{claim}
        $g$ is continuous
    \end{claim}
    \begin{proof}
        Given any $x=[(x_\alpha)]\in\overline{(X,d)}$ and any basic open $\widehat{U}\ni g(x)$, there is an $\varepsilon>0$ such that for sufficiently large $\alpha$, $d(x_\alpha,X\setminus U)>\varepsilon$. 
        Given any $y=[(y_\alpha)]$, if $\widehat{d}(x,y)<\varepsilon/2$ then for sufficiently large $\alpha$, $d(y_\alpha,X\setminus U)<\varepsilon/2$ so $g(y)\in\widehat{U}$, as desired. 
    \end{proof}
    \begin{claim}
        $f$ and $g$ are inverses to each other.
    \end{claim}
    \begin{proof}
        We first show that for any $\mathcal{F}\in\widehat{X}$, $gf(\mathcal{F})=\mathcal{F}$. Given any $\mathcal{F}\in\widehat{X}$ and any $U\in \mathcal{F}$, $\mathcal{V}_U=\{W\subseteq U\mid d(W,X\setminus U)>0\}$ is an open cover of $U$ so there is a $W\in\mathcal{F}\cap\mathcal{V}_U$. 
        If $\alpha$ is sufficiently large that $d(W,X\setminus U)>2/\alpha$ then since $W\cap U_n^\mathcal{F}\neq\varnothing$, 
        $d(x_\alpha,X\setminus U)>d(W,X\setminus U)-1/\alpha>1/\alpha$ so $U\in gf(\mathcal{F})$. 

        Conversely, suppose $U\in gf(\mathcal{F})$. Fix $\varepsilon>0$ such that for all sufficiently large $\alpha$, $d(x_\alpha^{\mathcal{F}},X\setminus U)>\varepsilon$. 
        Then if $\alpha>2/\varepsilon$, $B_{1/\alpha}(x_\alpha^\mathcal{F})\subseteq U$ so $U\in\mathcal{F}$. 

        We now show that $fg(x)=x$ for $x\in\overline{(X,d)}$. 
        Given $x=[(x_\alpha)]\in\overline{(X,d)}$ and $\gamma<\omega$, since $(x_\alpha)$ is $d-Cauchy$ we may find an $N$ such that if $\alpha,\beta>N$ then $d(x_\alpha,x_\beta)<1/\gamma$. 
        In particular, if $\alpha>N$ then $B_{2/\gamma}(x_\alpha)\in g(x)$ so $d(x_\alpha,x_\alpha^{g(x)})\leq\frac1\gamma+\frac1\alpha$. 
        In particular, $(x_\alpha)_{\alpha<\omega}$ is equivalent to $(x_\alpha^{g(x)})_{\alpha<\omega}$.
    \end{proof}
\end{proof}
We note that with some mild absoluteness, the proof generalizes to all transitive models.
\begin{corollary} \label{complete_wf}
    Suppose $M$ is a transitive model of set theory and $M\models (X,d)$ is a complete metric space. 
    Then $\widehat{{\Omega}(X)}$ is homeomorphic to the $d$ completion of $X$
\end{corollary}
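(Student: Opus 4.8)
The plan is to reduce the homeomorphism claim to a single absolute assertion about the inverse map from the proof of Proposition \ref{complete_met}, and then to establish that assertion by the collapse-and-absoluteness technique already used in Proposition \ref{prod_pres}. First I would observe that the maps $f$ and $g$ of Proposition \ref{complete_met} are defined by formulas that make sense verbatim over any transitive model: $f(\mathcal F)$ is the class of any sequence $(x_\alpha)$ with $B_{1/\alpha}(x_\alpha)\in\mathcal F$, and $g(x)$ is the set of ground-model open $U$ admitting an $\varepsilon>0$ with a tail of a Cauchy representative of $x$ bounded $\varepsilon$ away from $X\setminus U$. Every verification in the proof of Proposition \ref{complete_met} \emph{except} the claim ``$g(x)\in\widehat{\Omega X}$'' is a purely local $\varepsilon$--$N$ argument that uses only that $\mathcal F$ is a completely prime filter over $M$ (for $gf=\operatorname{id}$ one applies complete primeness of $\mathcal F$ to the cover $\mathcal V_U\in M$), and so goes through unchanged in $V$: $f$ and $g$ are continuous, $f(\mathcal F)\in\overline{(X,d)}$, and $fg=gf=\operatorname{id}$ wherever both are defined. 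Thus it suffices to prove in $V$ the assertion
\[(\star)\qquad \forall x\in\overline{(X,d)}\ \ g(x)\in\widehat{\Omega X};\]
granting $(\star)$, the maps $f$ and $g$ restrict to mutually inverse continuous bijections between $\widehat{\Omega X}$ and $\overline{(X,d)}$ as computed in $V$, which is the desired homeomorphism.

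To obtain $(\star)$, I would force with $\mathbb P=\operatorname{Coll}(\omega,\lambda)$ for $\lambda\geq(2^{|\Omega X|})^M$ and take $G$ to be $V$-generic. Since $\operatorname{Coll}(\omega,\lambda)$ is computed absolutely from $\lambda\in M$ and $M\subseteq V$, the filter $G$ is also $M$-generic, so $M[G]$ is a genuine forcing extension of $M$ with $M[G]\subseteq V[G]$. In $M[G]$, Proposition \ref{complete_met} applies with $M$ as the ground model and $M[G]$ as its extension, and in particular $(\star)$ holds there. Now in $V[G]$ the set $\mathcal P(\Omega X)\cap M$, and hence $X$, is countable, so by Remark \ref{Borel} it may be coded by a real $c\in M[G]$ relative to which $\widehat{\Omega X}$, $X$, $d$ and the maps $f,g$ are all arithmetically defined. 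In these parameters $(\star)$ is a $\boldsymbol\Pi^1_1$ statement: the outer quantifier ranges over Cauchy sequences, while ``completely prime over $M$'' is merely a quantifier over the now-countable set $\mathcal P(\Omega X)\cap M$. By Mostowski absoluteness between the transitive models $M[G]\subseteq V[G]$, both containing $c$, the statement $(\star)$ holds in $V[G]$.

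Finally I would transfer $(\star)$ back down to $V$. Its negation asserts the existence of a Cauchy sequence $x$ and a cover $\mathcal U\in\mathcal P(\Omega X)\cap M$ witnessing that $g(x)$ fails complete primeness; this is $\boldsymbol\Sigma^1_1$, its putative witness $x$ lies in $V$, and all of its internal data ($\mathcal U$ and any candidate $W\in\mathcal U$) live in $\Omega X\in M\subseteq V$, while the predicate ``$W\in g(x)$'' is absolute between $V$ and $V[G]$. Hence $\neg(\star)$ is upward absolute from $V$ to $V[G]$: were it to hold in $V$ it would hold in $V[G]$, contradicting the previous paragraph. Therefore $(\star)$ holds in $V$, and by the first paragraph $\widehat{\Omega X}$ is homeomorphic to $\overline{(X,d)}$ in $V$. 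I expect the transfer down to $V$ to be the main obstacle: because $\mathcal P(\Omega X)\cap M$ need not be countable in $V$, the space $\widehat{\Omega X}^V$ is not definable from a real there and one cannot simply invoke absoluteness for the full homeomorphism statement. It is essential to first isolate the single assertion $(\star)$, whose existential witnesses are forced to live in the ground universe $V$, so that only it — and not the ambient combinatorics of $\widehat{\Omega X}^V$ — needs to pass through the generic extension.
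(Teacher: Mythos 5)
Your proposal is correct and takes essentially the same approach as the paper: the paper's proof likewise isolates the claim that $g(x)\in\widehat{\Omega X}$ for every $d$-Cauchy sequence as the only place forcing was used in Proposition \ref{complete_met}, notes that this assertion is $\boldsymbol\Pi^1_1$ in any model where $(2^{|\Omega X|})^M$ is countable and is downwards absolute, and concludes by Mostowski absoluteness. Your write-up merely spells out the steps the paper compresses, namely passing through $M[G]\subseteq V[G]$ for a collapse generic and transferring the $\boldsymbol\Sigma^1_1$ negation upward from $V$ to $V[G]$.
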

\begin{proof}
    The assertion that for every $d$-Cauchy sequence $(x_n)$, $\{U\in V\mid\exists\varepsilon>0\exists N\forall n>N(B_\varepsilon(x_n)\subseteq U)\}\in\widehat{{\Omega}(X)}$ is downwards absolute and $\boldsymbol\Pi^1_1$ in any model where $(2^{{\Omega}(X)})^M$ is countable and so is true in $V$ by Mostowski absoluteness. 
    Proving this assertion was the only use of forcing used in the proof of Proposition \ref{complete_met}
\end{proof}
\begin{remark}
    Well-foundedness of $M$ is essential for Corollary \ref{complete_wf}; see \cite[Example 5.7]{Zap}. 
\end{remark}

We now analyze the situation of interpreting locally compact Hausdorff spaces and show directly that $\widehat{{\Omega}X}$ coincides with the constructions of \cite{Frem}. 
The intuition for the construction of \cite{Frem} is to define the interpretation of a space $X$ via germs of continuous functions on the Stone space $S(\mathbb{B})$ of a complete Boolean algebra at the generic filter. 
Formally, for a complete Boolean algebra $\mathbb{B}$, we define $C_-(X)$ as the set of continuous functions defined on a comeager subspace of $S(\mathbb{B})$. 
The Fremlin interpretation of $X$ has underlying set $C_-(X)/~_G$ where $f~_Gy$ if and only if there is $b\in G$ such that $f\upharpoonright N_b=^*g\upharpoonright N_b$ for $N_b$ the basic open set of filters containing $b$ and $=^*$ meaning equality off a meager set. 
The Fremlin interpretation is topologized by declaring that for each $U\subseteq X$ open in the ground model, $\{[f]\mid\exists b\in G(f[N_b]\subseteq^* U)\}$ is open. 
We note that in full generality, our interpretation does not coincide with the Fremlin interpretation: Example \ref{non_Haus} gives a Hausdorff space whose interpretation is not Hausdorff whereas by \cite[2A(c)]{Frem}, the Fremlin interpretation of any Hausdorff space is Hausdorff. 

We note that by \cite[Theorem 16.1]{Zap}, the Fremlin interpretation coincides with ours for interpretable spaces; we here only show they coincide for locally compact spaces but the more general case can be proved directly by much the same but more cumbersome methodology. 
In light of Proposition \ref{pt-map}, we need only to prove

\begin{proposition}
    Suppose $X$ is locally compact Hausdorff, $\mathbb{B}$ is a complete Boolean algebra, and $f\colon \mathcal{O}X\to\mathbb{B}$ is a map of frames. 
    There is a dense open $U\subseteq S(\mathbb{B})$ and a continuous $g\colon U\to X$ such that whenever $W\subseteq X$ is open, $N_{f(W)}=\overline{g^{-1}W}$. 
    Moreover, if $h\colon U'\to X$ is any other function satisfying the same hypotheses then $h\upharpoonright U\cap U'=g\upharpoonright U\cap U'$. 
\end{proposition}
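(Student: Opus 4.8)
The plan is to construct $g$ pointwise as the unique point of $X$ determined by an ultrafilter $F$ on $\mathbb{B}$ together with $f$, using local compactness to guarantee that this point both exists and is unique. Fix a basis $\mathcal{B}$ of $\mathcal{O}X$ consisting of open sets with compact closure (available since $X$ is locally compact Hausdorff), and write $\overline{V}$ for closures. Since $X=\bigcup\mathcal{B}$ and $f$ preserves the top element and arbitrary joins, $\bigvee_{V\in\mathcal{B}}f(V)=f(X)=1$, so I would take
\[U=\bigcup_{V\in\mathcal{B}}N_{f(V)},\]
which is open and dense in $S(\mathbb{B})$ because the $N_{f(V)}$ cover a dense set.

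For each $F\in U$, consider the filter $\mathcal{F}_F=\{W\in\mathcal{O}X\mid f(W)\in F\}$ on $\mathcal{O}X$; it is proper since $f(\varnothing)=0\notin F$, and it contains some $V_0\in\mathcal{B}$. The family $\{\overline{W}\mid W\in\mathcal{F}_F\}$ has the finite intersection property: given $W_1,\dots,W_n\in\mathcal{F}_F$, the meet $V_0\cap W_1\cap\cdots\cap W_n$ again lies in $\mathcal{F}_F$ (as $f$ preserves finite meets and $F$ is a filter) and is nonempty (otherwise $0\in F$). As all these closed sets sit inside the compact set $\overline{V_0}$, their intersection $C_F$ is nonempty and compact. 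I expect the crux, and the step where local compactness (beyond mere sobriety) is genuinely used, to be showing that $C_F$ is a \emph{singleton}. To prove this, suppose $x\neq y$ both lie in $C_F$; using Hausdorffness and local compactness, choose relatively compact opens with $x\in V'\subseteq\overline{V'}\subseteq V$ and $y\notin\overline{V}$. Then $\{V,\,X\setminus\overline{V'}\}$ covers $X$, so $f(V)\vee f(X\setminus\overline{V'})=1\in F$, and since $F$ is an ultrafilter one joinand lies in $F$. If $f(V)\in F$ then $C_F\subseteq\overline{V}$, contradicting $y\in C_F$; if $f(X\setminus\overline{V'})\in F$ then $C_F\subseteq\overline{X\setminus\overline{V'}}$, contradicting $x\in V'$ (which is disjoint from $X\setminus\overline{V'}$, so $x\notin\overline{X\setminus\overline{V'}}$). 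Hence $C_F=\{g(F)\}$ defines $g\colon U\to X$.

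It then remains to verify continuity and the identity $N_{f(W)}=\overline{g^{-1}W}$ together. The key computation I would carry out is
\[g^{-1}(W)=U\cap\bigcup\{N_{f(V)}\mid V\in\mathcal{B},\ \overline{V}\subseteq W\}.\]
The inclusion $\supseteq$ is immediate from $C_F\subseteq\overline{V}\subseteq W$, while $\subseteq$ follows from the same two-element-cover/ultrafilter trick applied to a relatively compact $V$ with $g(F)\in V\subseteq\overline{V}\subseteq W$: that argument forces $f(V)\in F$. This exhibits $g^{-1}(W)$ as open (giving continuity) and, using $\bigvee\{f(V)\mid\overline{V}\subseteq W\}=f(W)$ and the density of $U$, as a dense open subset of $N_{f(W)}$; since $N_{f(W)}$ is clopen this yields $\overline{g^{-1}(W)}=N_{f(W)}$.

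Finally, uniqueness requires only Hausdorffness. If $h\colon U'\to X$ is continuous with $\overline{h^{-1}W}=N_{f(W)}$ for all open $W$, and $h(F)\neq g(F)$ for some $F\in U\cap U'$, separate the two points by disjoint opens $A\ni g(F)$ and $B\ni h(F)$. Continuity gives $F\in g^{-1}(A)\subseteq\overline{g^{-1}A}=N_{f(A)}$ and $F\in h^{-1}(B)\subseteq\overline{h^{-1}B}=N_{f(B)}$, so $f(A),f(B)\in F$; but $f(A)\wedge f(B)=f(A\cap B)=f(\varnothing)=0$, which cannot belong to the filter $F$. Hence $h$ and $g$ agree on $U\cap U'$.
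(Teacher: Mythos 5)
Your proposal is correct, and its skeleton coincides with the paper's proof: the same dense open domain $U$ built from relatively compact open sets, the same definition of $g(F)$ as the unique point of $\bigcap\{\overline{W}\mid f(W)\in F\}$ (nonempty by the finite intersection property inside one compact closure), the same two-open-cover-plus-ultrafilter-dichotomy argument that this intersection is a singleton, and an equivalent uniqueness argument (you separate $g(F)$ from $h(F)$ by disjoint opens and derive $0\in F$; the paper instead shows $h(F)$ lies in the singleton $\bigcap\{\overline{W}\mid f(W)\in F\}$). The one place you genuinely diverge is the continuity/closure step. The paper proves continuity by a compactness argument: given $g(H)\in W$ with $\overline{W}$ compact, it covers the compact boundary $\overline{W}\setminus W$ by finitely many sets and extracts a finite meet $\bigwedge_i f(W_{x_i})\in H$ whose basic clopen neighborhood maps into $W$, and then runs a separate density argument for $N_{f(W)}=\overline{g^{-1}W}$. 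You instead reuse the shrinking trick (choose $g(F)\in V''\subseteq\overline{V''}\subseteq V$, so $f(V)\vee f(X\setminus\overline{V''})=1$ and primality forces $f(V)\in F$) to obtain the explicit formula $g^{-1}(W)=U\cap\bigcup\{N_{f(V)}\mid V\in\mathcal{B},\ \overline{V}\subseteq W\}$, which delivers continuity and, together with $f(W)=\bigvee\{f(V)\mid V\in\mathcal{B},\ \overline{V}\subseteq W\}$ and the Stone-space fact $\overline{\bigcup_i N_{a_i}}=N_{\bigvee_i a_i}$, the closure identity in one stroke. Your route avoids finite subcovers entirely (only regularity of $X$ and primality of the ultrafilter are used in that step) and packages continuity and the identity $N_{f(W)}=\overline{g^{-1}W}$ as a single computation; the paper's route is more laborious there but produces an explicit finite-meet neighborhood witnessing continuity. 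Both arguments rest on the same two background facts about $S(\mathbb{B})$: its points are prime (ultra)filters, and joins in the complete algebra $\mathbb{B}$ compute closures of unions of basic clopen sets.
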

\begin{proof}
    Given $f$, let $\cU$ be an open cover of $X$ such that whenever $W\in\cU$, $\overline{W}$ is compact. 
    Then $U=\{H\in S(\mathbb{B})\mid\exists W\in\cU(f(W)\in H)\}$ is open and since $\bigvee_{W\in\cU}f(W)=1$, $U$ is dense. 
    For $H\in S(\mathbb{B})$, let $F_H=\{\overline{U}\mid f(U)\in H\}$; note that since $F_H$ is a family of closed sets with the finite intersection property and that some element of $f_H$ is compact, $\bigcap f_H\neq\varnothing$. 
    Moreover, $\bigcap f_H$ has exactly one point: if $x\neq y$, since $X$ is locally compact Hausdorff we may find open $A,B$ such that $x\in A\subseteq\overline{A}\subseteq B$ and $y\not\in B$. 
    Then since $f$ is a frame map, $f(B)\vee f(X\setminus\overline{A})=1$. 
    In particular, either $\overline{B}\in F_H$ or $\overline{X\setminus\overline{A}}\in F_H$; in the first case, $y$ is not in $\bigcap F_H$ and in the second, $x$ is not in $\bigcap F_H$. 

    Let $g(H)$ be the unique point in $\bigcap F_H$. 
    We show $g$ is continuous. 
    Suppose $H\in U$ and $W\supseteq f(H)$ is open; by shrinking $W$ if necessary, we may assume $\overline{W}$ is compact. For each $x\in \overline{W}\setminus W$, fix an open $W_x\ni g(H)$ such that $x\not\in \overline{W_x}$ and $f(W_x)\in H$. 
        Then $\{\overline{W}\setminus W_x\}$ is a cover of the compact space $\overline{W}\setminus W$ so there are $x_0,\ldots,x_n$ such that $\overline{W}\setminus W\subseteq \bigcup_i\overline{W}\setminus W_{x_i}$. 
        Note that $f(W)\vee \bigvee_if(X\setminus\overline{W_{x_i}})\vee f(X\setminus\overline{W})=1$ since $f$ is a map of frames; note in particular this implies $f(W)\in H$. 
        In particular, $f(W\cap\bigcap_iW_{x_i})=f(W)\wedge\bigwedge f(W_{x_i})\in H$. 
        Moreover, $f\left(W\cap\bigcap_i W_{x_i}\right)=\bigwedge_i f(W_{x_i})\in H$ and if $\bigwedge_if(W_{x_i})\in H'$ then $g(H')\in \overline{W}\cap\bigcap_i\overline{W_{x_i}}\subseteq W$. 
        
    As noted in the preceding paragraph, \[g^{-1}(U)= \bigcup g^{-1}\{W\subseteq U, \overline W\text{ compact}\}\subseteq \bigcup_{\{W\subseteq U, \overline W\text{ compact}\}}N_{f(W)}\subseteq N_{f(U)}\] for each $U\subseteq X$ open. 
    To see that $N_{f(U)}\subseteq\overline{g^{-1}(U)}$, fix by local compactness of $X$ a cover $\cU$ of $U$ such that whenever $W\in \cU$, $\overline{W}\subseteq U$. 
    Then $\bigcup \cU=U$ so $f(U)=\bigvee\{f(W)\mid W\in \cU\}$ so that 
    \[\bigcup_{W\in\cU}N_{f(W)}\subseteq g^{-1}U\subseteq N_{f(U)}\]
    is dense. 

    Given any other such $h$, whenever $U\ni g(H)$ is open, 
    \[H\in g^{-1}U\subseteq \overline{g^{-1}U}=N_{f(U)}=\overline{h^{-1}U}\subseteq h^{-1}\overline{U}\cup (S(\mathbb{B})\setminus\operatorname{dom}(h)).\]
    So if $h(H)$ is defined then $h(H)\in\bigcap\{\overline{U}\mid f(U)\in H\}=\bigcap F_H$. 
    But $\bigcap F_H$ has a unique point $g(H)$ so $g(H)=h(H)$. 
\end{proof}
As a nice non-Hausdoff example, we show that the spectrum of a commutative ring with $1$ interprets to the spectrum of the same commutative ring with $1$; note that more prime ideals may be added by forcing. 
First recall the basic definition. 
\begin{definition}
    Suppose $R$ is a commutative ring with $1$. 
    $\operatorname{Spec}(R)$ is the space whose points are prime ideals in $R$ with the topology generated by declaring that for each $a\in R$, $O_a=\{P\mid a\not\in P\}$ is open. 
\end{definition}
We show the following; note that unlike the previous examples we do not need to assume we are in a forcing extension:
\begin{proposition}
    $M\models ZFC$ and $R\in M$ is a commutative ring with $1$. 
    Let $L$ be the locale of open subsets of $\operatorname{Spec}(R)$ as computed in $M$. 
    Then $\widehat{L}$ is naturally homeomorphic to $\operatorname{Spec}(R)$. 
\end{proposition}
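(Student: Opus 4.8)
The plan is to construct an explicit homeomorphism between $\widehat{L}$ and $\operatorname{Spec}(R)$. The key observation is that a point of $\widehat{L}$ is a completely prime filter $\mathcal{F}$ over $M$ on the frame $L = \mathcal{O}(\operatorname{Spec}(R))^M$, and the open sets $O_a = \{P \mid a \notin P\}$ for $a \in R$ form a basis for this topology in $M$ that is closed under finite intersection (since $O_a \cap O_b = O_{ab}$). So a point $\mathcal{F}$ of $\widehat{L}$ is determined by which basic opens $O_a$ it contains. I would define a candidate map $\Phi\colon \widehat{L} \to \operatorname{Spec}(R)$ by sending $\mathcal{F}$ to the set $P_{\mathcal{F}} = \{a \in R \mid O_a \notin \mathcal{F}\}$, and conversely define $\Psi\colon \operatorname{Spec}(R) \to \widehat{L}$ by sending a prime ideal $P$ to the filter of all ground-model opens containing $P$ as a point, i.e. $\Psi(P) = \{U \in L \mid P \in U\}$.

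\medskip

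First I would verify that $\Psi(P)$ is genuinely a completely prime filter over $M$: it is clearly a filter (it is the principal-point filter at $P$), and complete primeness over $M$ says that whenever $U \in \Psi(P)$ and $\mathcal{U} \in M \cap \mathcal{P}(L)$ satisfies $\bigvee \mathcal{U} = U$, some member of $\mathcal{U}$ lies in $\Psi(P)$ — but this is just the statement that if $P \in U = \bigcup \mathcal{U}$ then $P \in W$ for some $W \in \mathcal{U}$, which holds by definition of union. Next I would check that $P_{\mathcal{F}}$ is a prime ideal. The multiplicative structure transfers cleanly: $P_{\mathcal{F}}$ is prime exactly because $O_{ab} = O_a \cap O_b$, so $ab \in P_{\mathcal{F}} \iff O_{ab} \notin \mathcal{F} \iff O_a \cap O_b \notin \mathcal{F}$, and since $\mathcal{F}$ is a filter this happens iff $O_a \notin \mathcal{F}$ or $O_b \notin \mathcal{F}$; the ideal axioms should follow from the frame relations among the $O_a$ (closure under addition uses that $O_{a+b} \supseteq$ nothing forced, so the main subtlety is additive closure, which I address below). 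I would then check $\Phi$ and $\Psi$ are mutually inverse by unwinding definitions.

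\medskip

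For continuity and openness I would use that $\widehat{O_a} = \{\mathcal{F} \mid O_a \in \mathcal{F}\}$ ranges over a basis for $\widehat{L}$, and that under $\Phi$ we have $\mathcal{F} \in \widehat{O_a} \iff O_a \in \mathcal{F} \iff a \notin P_{\mathcal{F}} \iff P_{\mathcal{F}} \in O_a$. Thus $\Phi^{-1}[O_a] = \widehat{O_a}$ directly, giving continuity of $\Phi^{-1} = \Psi$ and openness of $\Phi$ on basic opens simultaneously, so $\Phi$ is a homeomorphism once it is a bijection. The reason no forcing or absoluteness is needed here — unlike the metric and product cases — is that complete primeness of $\Psi(P)$ is verified outright in $V$: every point $P$ of $\operatorname{Spec}(R)$ already lies in $M$ (prime ideals of a ground-model ring are ground-model objects) and the completely prime filters over $M$ correspond exactly to points that genuinely exist, with no extra points added.

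\medskip

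\textbf{The main obstacle} I expect is the surjectivity of $\Phi$, equivalently showing every completely prime filter $\mathcal{F}$ over $M$ arises as $\Psi(P)$ for an honest prime ideal $P$. This is where one must rule out ``phantom'' points: one has to show $P_{\mathcal{F}}$ is additively closed (so that it is actually an ideal, not merely a prime multiplicative-complement), and that the point $P_{\mathcal{F}}$ reproduces $\mathcal{F}$ on all of $L$ and not just on the basic opens $O_a$. Additive closure is the delicate algebraic input: if $a, b \in P_{\mathcal{F}}$ but $a + b \notin P_{\mathcal{F}}$, then $O_{a+b} \in \mathcal{F}$, and I would derive a contradiction using the Zariski-geometric fact that $O_{a+b} \subseteq O_a \cup O_b$ fails in general but the correct covering relation among the $O_a$ — namely that $\{O_c \mid c \in I\}$ covers $O_a$ whenever $a$ lies in the radical of the ideal generated by such $c$ — together with complete primeness of $\mathcal{F}$ over $M$ (applied to a ground-model cover witnessing that $O_{a+b} \subseteq O_a \cup O_b$ on the relevant closed set) forces $O_a \in \mathcal{F}$ or $O_b \in \mathcal{F}$. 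Making this covering argument precise, and checking that the cover in question lies in $M$ so that complete primeness applies, is the crux of the proof.
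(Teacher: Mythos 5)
Your construction breaks down at exactly the point you flag as unproblematic. The claim that ``prime ideals of a ground-model ring are ground-model objects'' and that consequently ``no extra points'' appear is false, and the proposition is interesting precisely because it is false: a prime ideal is an arbitrary subset of $R$ closed under the ideal operations, and forcing adds such subsets. For instance, if $R$ is a countable atomless Boolean algebra of $M$ viewed as a Boolean ring, its prime ideals are complements of ultrafilters, so any new point of Cantor space (e.g.\ an $M$-generic filter) yields a prime ideal of $R$ in $V$ that is not in $M$. The statement being proved is that $\widehat{L}$ is homeomorphic to $\operatorname{Spec}(R)$ \emph{as computed in $V$}, new primes included (the paper remarks that ``more prime ideals may be added by forcing''). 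Because of this, your map $\Psi(P)=\{U\in L\mid P\in U\}$ does not even typecheck for a new prime $P$: the elements of $U\in L$ are $M$-prime ideals, so literally $P\notin U$; and your verification of complete primeness of $\Psi(P)$ (``holds by definition of union'') works only for $P\in M$. The crux your proposal omits is to attach to an \emph{arbitrary} prime $P$ of $V$ the filter $\mathcal{F}_P$ generated by $\{O_a\mid a\notin P\}$ and to prove that it is completely prime over $M$. The paper's argument for this is: given a ground-model cover of $O_a\in\mathcal{F}_P$, reduce to basic opens $O_{b_i}$; working in $M$ (this is where $M\models ZFC$ is used), every $M$-prime containing all the $b_i$ contains $a$, hence $a\in\sqrt{\langle b_i\rangle}$ computed in $M$, yielding an identity $a^n=\sum_j r_jb_{i_j}^{e_j}$; this identity is a statement about finitely many ring elements and so persists in $V$, and primeness of $P$ in $V$ then forces some $b_{i_j}\notin P$, i.e.\ $O_{b_{i_j}}\in\mathcal{F}_P$. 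You do invoke the radical-covering fact, but only for additive closure of $P_{\mathcal{F}}$, which is the wrong place --- it is indispensable here and unnecessary there.

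On that secondary point: your assertion that $O_{a+b}\subseteq O_a\cup O_b$ ``fails in general'' is mistaken; it holds in every commutative ring, since if $a\in P$ and $b\in P$ then $a+b\in P$. So additive closure of $P_{\mathcal{F}}$ is the easy step: it follows from this inclusion together with primeness of $\mathcal{F}$ applied to the finite (hence ground-model) cover $\{O_a,O_b\}$ of $O_{a+b}$. The step you dismiss as routine --- producing a point of $\widehat{L}$ from an arbitrary prime ideal of $V$ --- is where all the algebraic and set-theoretic content of the proposition lives.
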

\begin{proof}
    Suppose $\mathcal{F}\subseteq {\Omega}\operatorname{Spec}(R)$ is completely prime over $M$ and let $P_{\mathcal{F}}=\{a\in R\mid O_a\not\in\mathcal{F}\}$. 
    \begin{claim}
        $P_{\mathcal{F}}$ is a prime ideal in $R$.
    \end{claim}
    \begin{proof}

        To see that $P_{\mathcal{F}}$ is an ideal, first note $\operatorname{Spec}(R)=O_1\in\mathcal{F}$ so $1\not\in P_{\mathcal{F}}$ and $\varnothing=O_0\not\in\mathcal{F}$ so $0\in P_{\mathcal{F}}$. 
        
        For closure under addition, if $a,b\in P_\mathcal{F}$ then since $O_{a+b}\subseteq O_a\cup O_b$ and $O_a,O_b\not\in\mathcal{F}$, we must have $O_{a+b}\not\in\mathcal{F}$ since $\mathcal{F}$ is a filter. 
        In particular, $a+b\in P_{\mathcal{F}}$. 

        For closure under multiplication by ring elements, if $a\in P_{\mathcal{F}}$ and $b\in R$ then since $O_{ab}\subseteq O_a$, $ab\in P_\mathcal{F}$ since $\mathcal{F}$ is a filter. 

        Finally, if $O_{ab}\not\in\mathcal{F}$ then since $O_a\wedge O_b=O_{ab}$ and $\mathcal{F}$ is a filter, we must have either $O_a\not\in\mathcal{F}$ or $O_b\not\in\mathcal{F}$. 
    \end{proof}
    For the inverse, given a prime ideal $P$ let $\mathcal{F}_P$ be the filter generated by $\{O_a\mid a\not\in P\}$. 
    \begin{claim}
        $\mathcal{F}_P$ is completely prime over $M$. 
    \end{claim}
    \begin{proof}
        Note that trivially, $\mathcal{F}_P$ is upwards closed, $1\in\mathcal{F}_P$, and $0\not\in\mathcal{F}_p$. 
        For closure under finite meets, if $O_a,O_b\in\mathcal{F}_p$ then $a,b\not\in P$; since $P$ is prime, $ab\not\in P$ so $O_{ab}\in\mathcal{F}_p$. 
        Finally, suppose $U\in\mathcal{F}_p$ and $\cU\in M$ such that $\bigvee\cU=U$. 
        By shrinking $U$ and $\cU$ if necessary, we may assume $U$ and all elements of $\cU$ are basic open sets; let $U=O_a$ and $\cU=\{O_{b_i}\mid i\in I\}$. 
        Then every prime ideal in $M$ containing each $b_i$ also contains $a$; since $M\models ZFC$, $a\in\sqrt{\langle b_i\rangle}$. 
        That is, there are $n<\omega$, $r_0,\ldots,r_{n-1}\in R$, $i_0,\ldots,i_{n-1}\in I$, and $0<e_0,\ldots,e_{n-1}<\omega$ such that $a^n=\sum r_ib_i^{e_i}$. 
        Since $P$ is a prime ideal in $V$, $a\not\in P$, and this equation still holds in $V$, we must have some $b_i$ not be in $P$; that is, $O_{b_i}\in\mathcal{F}_P$. 
        
    \end{proof}
    Note that $\mathcal{F}\mapsto P_\mathcal{F}$ and $P\mapsto\mathcal{F}_P$ are both continuous. 
    What remains to show is that for $\mathcal{G}$ completely prime over $M$, $\mathcal{F}_{P_{\mathcal{G}}}=\mathcal{G}$ and for $P$ a prime ideal, $P_{\mathcal{F}_P}=P$. 

    Indeed, if $P$ is a prime ideal then 
    \[P_{\mathcal{F}_P}=\{a\mid O_a\not\in\mathcal{F}_P\}=P.\]
    If $\mathcal{G}$ is completely prime over $M$ and $O\in\mathcal{G}$, we may find some $a$ such that $O_a\subseteq O$ and $O_a\in\mathcal{G}$. 
    Then $a\not\in P_{\mathcal{G}}$ so $O\supseteq O_a\in \mathcal{F}_{P_{\mathcal{G}}}$. 
    Conversely, if $O\in \mathcal{F}_{P_{\mathcal{G}}}$ we may find some $a\not\in P$ such that $O_a\subseteq O$. 
    Then $O_a\in\mathcal{G}$ so $O\in\mathcal{G}$.
\end{proof}

\section{Separation Properties of Interpretations} \label{sep_section}
We now begin analyzing what properties of a locale $L$ are necessary for $\widehat{L}$ to have nice separation properties. 
For the context of $T_2$ and $T_3$, we show that $\widehat{L}$ always having nice separation properties is equivalent to the localic notions of $L$ being Isbell Hausdorff and $L$ being regular respectively. 
We first note that there are indeed Hausdorff spaces that interpret to spaces that are not even $T_1$; our example is the usual example of a Hausdorff space whose corresponding locale is not Isbell Hausdorff (see \cite[III.1.5]{Johnstone}). 

\begin{example} \label{non_Haus}
    Let $X$ be the space with underlying set $\mathbb{R}$ with the topology generated by the usual (Euclidean) topology together with $\{\mathbb{Q}\}$. 
    Recall that $x$ is a Cohen real over $V$ if $x$ is in every dense open subset of $\mathbb{R}$ which is coded in $V$; we show that any forcing that adds a Cohen real will force that $\widehat{X}$ is not Hausdorff. 
    Suppose in $V[G]$ that $x$ is a Cohen real over $V$. 
    Let $\mathcal{F}_1=\{U\cup (W\cap\mathbb{Q})\mid x\in U\}$ and $\mathcal{F}_2=\{U\cup (W\cap\mathbb{Q})\mid x\in W\}$. 
    Then $\mathcal{F}_1$ and $\mathcal{F}_2$ are both points of $\widehat{X}$: for $\mathcal{F}_1$ this uses no properties of $x$ besides being irrational. 
    For $\mathcal{F}_2$ this is derived easily from noting that if $\cU$ is an open cover of $\mathbb{Q}$ and $\mathcal{V}$ is a family of open sets such that $\cU=\{W\cap\QQ\mid W\in\mathcal{V}\}$ then $\bigcup\mathcal{V}$ is dense open so $x\in\bigcup\mathcal{V}$. 
    
    Then $\mathcal{F}_1\subsetneq\mathcal{F}_2$ so every basic open set containing $\mathcal{F}_1$ also contains $\mathcal{F}_2$ but $\mathcal{F}_1\neq\mathcal{F}_2$. 
\end{example}

We first prove implications about the $T_1$ axiom, simply asserting that points are closed. 
We here see a nice link between localic notions and the corresponding notions for spaces in forcing extensions: item (\ref{Tnu}) is referred to as the axiom $T_U$ in, for instance, \cite[III.1.5]{Johnstone} (short for totally unordered).    

\begin{proposition} \label{T1_equiv}
    The following are equivalent for a locale $L$: 
    \begin{enumerate}
        \item $\widehat{L}$ is $T_1$ in all forcing extensions. \label{T1all}
        \item for all locales $L'$ and all frame maps $f,g\colon L\to L'$ with $f\leq g$ pointwise, $f=g$. \label{Tnu}
        \item for all complete Boolean algebras $\mathbb{B}$ and all frame maps $f,g\colon L\to\mathbb{B}$ with $f\leq g$ pointwise, $f=g$. \label{T1BA}
    \end{enumerate}
\end{proposition}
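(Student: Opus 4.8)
The plan is to establish the cycle $(\ref{T1all})\Rightarrow(\ref{Tnu})\Rightarrow(\ref{T1BA})\Rightarrow(\ref{T1all})$, of which $(\ref{Tnu})\Rightarrow(\ref{T1BA})$ is immediate since every complete Boolean algebra is in particular a locale, so $(\ref{Tnu})$ restricted to $L'=\mathbb{B}$ gives $(\ref{T1BA})$. The whole argument rests on two elementary observations that I would isolate first. Since the sets $\widehat{\ell}$ form a basis for $\widehat{L}$ and $\widehat{\ell}\cap\widehat{\ell'}=\widehat{\ell\wedge\ell'}$, the specialization preorder of $\widehat{L}$ is exactly containment of completely prime filters: a point $x$ lies in the closure of $\{y\}$ if and only if every basic open containing $x$ contains $y$, i.e.\ if and only if $x\subseteq y$. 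Consequently $\widehat{L}$ is $T_1$ precisely when no two distinct points are $\subseteq$-comparable. The second observation is that if $f\leq g\colon L\to L'$ pointwise and $z$ is any point of $\widehat{L'}$, then because $z$ is upward closed, $f(\ell)\in z$ forces $g(\ell)\in z$; hence $\widehat{f}(z)=f^{-1}z\subseteq g^{-1}z=\widehat{g}(z)$. Thus pointwise-comparable frame maps always pull points back to $\subseteq$-comparable points.

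For $(\ref{T1all})\Rightarrow(\ref{Tnu})$ I would argue contrapositively. Suppose $L$ fails $(\ref{Tnu})$, so there are frame maps $f\neq g\colon L\to L'$ with $f\leq g$ pointwise; fix $\ell_0$ with $f(\ell_0)<g(\ell_0)$ (strictness coming from $f\leq g$ together with $f\neq g$). Since $g(\ell_0)\not\leq f(\ell_0)$, item $(\ref{sym})$ of Proposition \ref{l_struct} applied to $L'$ shows that after a sufficiently large collapse $\widehat{g(\ell_0)}\not\subseteq\widehat{f(\ell_0)}$, so there is a point $z$ of $\widehat{L'}$ with $g(\ell_0)\in z$ but $f(\ell_0)\notin z$. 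By the second observation $\widehat{f}(z)\subseteq\widehat{g}(z)$, while $\ell_0\in\widehat{g}(z)\setminus\widehat{f}(z)$, so $\widehat{f}(z)\subsetneq\widehat{g}(z)$ are two distinct comparable points of $\widehat{L}$. By the first observation $\widehat{L}$ is not $T_1$ in this extension, contradicting $(\ref{T1all})$.

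For $(\ref{T1BA})\Rightarrow(\ref{T1all})$ I would again argue contrapositively. If $\widehat{L}$ is not $T_1$ in some $\mathbb{B}$-generic extension $V[G]$, the first observation yields distinct points $x\subsetneq y$; fixing names and a condition $b\in G$ forcing $\dot x\subseteq\dot y$ and $\dot x\neq\dot y$, I would pass to the complete Boolean algebra $\mathbb{B}\!\upharpoonright\! b$. Via Proposition \ref{pt-map} relativized below $b$, the names $\dot x,\dot y$ correspond to frame maps $f,g\colon L\to\mathbb{B}\!\upharpoonright\! b$ given by $\ell\mapsto\llbracket\ell\in\dot x\rrbracket\wedge b$ and $\ell\mapsto\llbracket\ell\in\dot y\rrbracket\wedge b$, and the containment $\dot x\subseteq\dot y$ below $b$ gives $f\leq g$ pointwise. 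Strengthening $b$ to decide some $\ell_0\in\dot y\setminus\dot x$ makes $f(\ell_0)=0<b=g(\ell_0)$, so $f\neq g$, contradicting $(\ref{T1BA})$.

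I expect the main technical care to be needed in this last step, where the witness to non-$T_1$-ness in $V[G]$ may only be defined below a condition rather than on all of $\mathbb{B}$; the remedy is precisely the relativization to $\mathbb{B}\!\upharpoonright\! b$, under which a name for a point genuinely becomes a total frame map into a complete Boolean algebra as required by $(\ref{T1BA})$. The other recurring subtlety, present in both nontrivial implications, is tracking strictness: $f\leq g$ together with $f\neq g$ must really produce a single $\ell_0$ with $f(\ell_0)<g(\ell_0)$, which is exactly what lets Proposition \ref{l_struct}$(\ref{sym})$ separate the corresponding basic open sets and thereby convert the algebraic failure into a pair of nested, distinct points.
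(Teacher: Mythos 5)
Your proposal is correct and takes essentially the same route as the paper: the same cycle $(\ref{T1all})\Rightarrow(\ref{Tnu})\Rightarrow(\ref{T1BA})\Rightarrow(\ref{T1all})$, with $(\ref{T1all})\Rightarrow(\ref{Tnu})$ proved contrapositively in a collapse extension (you inline item (\ref{sym}) of Proposition \ref{l_struct} where the paper cites item (\ref{neq}), whose proof is exactly that use of (\ref{sym})), and $(\ref{T1BA})\Rightarrow(\ref{T1all})$ via the correspondence of Proposition \ref{pt-map} between names for points and frame maps into $\mathbb{B}$. Your explicit passage to $\mathbb{B}\upharpoonright b$ and the tracking of a strict witness $\ell_0$ are careful refinements of details the paper glosses over, not a different argument.
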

\begin{proof}
    Note that $(\ref{Tnu})\Rightarrow(\ref{T1BA})$ is trivial. 
    
    We show $(\ref{T1all})\Rightarrow(\ref{Tnu})$ by contraposition. 
    Suppose $f,g\colon L\to L'$ are distinct maps of frames with $f\leq g$ pointwise and work in $V[G]$ for $G$ $V$-generic for $\operatorname{Coll}(\omega,\max(|L|,|L'|))$.
    By item \ref{neq} of Proposition \ref{l_struct}, $\widehat{f}\neq \widehat{g}$ so we may fix some $x\in\widehat{L'}$ such that $\widehat{f}(x)\neq\widehat{g}(x)$. 
    Since $f\leq g$ pointwise, $\widehat{f}(x)\subseteq\widehat{g}(x)$; that is, every basic open set containing $\widehat{f}(x)$ also contains $\widehat{g}(x)$ so that $\widehat{L}$ is not $T_1$. 

    We finally show $(\ref{T1BA})\Rightarrow(\ref{T1all})$ by contraposition. 
    Suppose $\mathbb{B}$ is a complete Boolean algebra forcing that $L$ is not $T_1$ and let $x,y$ be $\mathbb{B}$ names for distinct elements of $\widehat{L}$ such that $y\in\overline{x}$. 
    Let $f,g\colon L\to\mathbb{B}$ be the corresponding frame maps from Proposition \ref{pt-map}. 
    Then whenever $\ell\in L$, $f(\ell)=\llbracket x\in\widehat{\ell}\rrbracket\leq\llbracket y\in\widehat{\ell}\rrbracket=g(\ell)$ so $f,g$ witness a failure of (\ref{T1BA}).
\end{proof}
An easy use of Mostowski absoluteness allows us to extend beyond forcing extensions:
\begin{corollary} \label{Most_T1}
    Suppose $M$ is a transitive model of set theory and $M\models L$ is a $T_U$ locale. 
    Then $\widehat{L}^V$ is $T_1$.
        
\end{corollary}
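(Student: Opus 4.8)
The plan is to reduce the $T_1$ property of $\widehat{L}^V$ to a $\boldsymbol\Pi^1_1$ assertion and then transfer the forcing result of Proposition \ref{T1_equiv}, applied internally to $M$, upward through a collapse. First I would record the order-theoretic reformulation of $T_1$: since the basic open sets of $\widehat{L}$ are the $\widehat{\ell}$, one point lies in the closure of another exactly when the corresponding filters are comparable under inclusion, so $\widehat{L}^V$ fails to be $T_1$ precisely when some completely prime filter $\mathcal{F}$ over $M$ is strictly contained in another such filter $\mathcal{G}$. Write $\psi$ for the assertion that no such pair $\mathcal{F}\subsetneq\mathcal{G}$ exists; the goal becomes proving $\psi$ in $V$.

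Next I would feed $L$ into Proposition \ref{T1_equiv} inside $M$. As that proposition is a theorem of ZFC and $M\models L$ is $T_U$, we obtain $M\models$ ``$\widehat{L}$ is $T_1$ in every forcing extension''. I would then collapse: force over $V$ with $\mathbb{P}=\operatorname{Coll}(\omega,(2^{|L|})^M)$ to get $V[G]$. Because the conditions of $\mathbb{P}$ are finite partial functions into an ordinal of $M$, we have $\mathbb{P}^M=\mathbb{P}^V$, and since $G$ meets every dense subset lying in $V\supseteq M$ it is simultaneously $M$-generic; thus $M[G]$ is defined and $M[G]\subseteq V[G]$. Being completely prime over $M$ only quantifies over the fixed set $\mathcal{P}(L)\cap M$, so it has the same meaning in $M[G]$, in $V$, and in $V[G]$; in particular the $M$-internal conclusion gives $M[G]\models\psi$.

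The transfer step uses Remark \ref{Borel}: in $M[G]$ the set $\mathcal{P}(L)\cap M$ is countable, so fixing a real $r\in M[G]$ that codes $L$ together with its frame operations and an enumeration of $\mathcal{P}(L)\cap M$, the predicate ``completely prime over $M$'' becomes arithmetic in $r$ and $\psi$ becomes $\boldsymbol\Pi^1_1(r)$. By Mostowski absoluteness applied to the transitive models $M[G]\subseteq V[G]$, both containing $r$, the statement $\psi$ ascends to $V[G]$. Finally, any witnessing pair $\mathcal{F}\subsetneq\mathcal{G}$ in $V$ would survive unchanged into $V[G]$, since complete primeness over $M$ is preserved, contradicting $\psi^{V[G]}$; hence $\psi$ holds in $V$ and $\widehat{L}^V$ is $T_1$.

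The step I expect to be delicate is the absoluteness transfer. The content is that $\psi$, a $\boldsymbol\Pi^1_1$ statement, passes \emph{upward} from the smaller model $M[G]$ to the larger model $V[G]$; this is legitimate because well-foundedness of the tree associated to $\neg\psi$ is certified by a rank function that already exists in $M[G]$ and remains a rank function in $V[G]$, so no branch (hence no witnessing pair) can appear in the extension. The one other point needing care is that $G$ is at once $V$- and $M$-generic for the same poset, which is exactly what allows the conclusion of Proposition \ref{T1_equiv} drawn inside $M$ to be invoked at the stage $M[G]$ sitting inside $V[G]$.
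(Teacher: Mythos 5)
Your proof is correct and takes essentially the same route as the paper's: force with $\operatorname{Coll}(\omega,(2^{|L|})^M)$ over $V$, apply Proposition \ref{T1_equiv} inside $M$ to get that $\widehat{L}^{M[G]}$ is $T_1$, transfer this $\boldsymbol\Pi^1_1(r)$ statement upward from $M[G]$ to $V[G]$ by Mostowski absoluteness, and conclude in $V$ because a failure of $T_1$ (a strict inclusion of filters completely prime over $M$) is upwards absolute. You simply make explicit several points the paper leaves implicit—the $M$-genericity of $G$, the coding of the parameters by a real, and the rank-function justification of the upward $\Pi^1_1$ transfer—all of which are sound.
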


\begin{proof}
    Let $G$ be $V$-generic for $\operatorname{Coll}(\omega,(2^{|L|})^M)$. 
    Then by Proposition \ref{T1_equiv}, $\widehat{L}^{M[G]}$ is $T_1$. 
    Note that $\widehat{L}$ being $T_1$ is the assertion that 
    \[\forall x,y\in \widehat{L}\exists\ell\in L(\ell\not\in x\wedge \ell\in y);\]
    by Remark \ref{Borel}, this is assertion is $\Pi^1_1(L,\mathcal{P}(L)\cap M)$ so by Mostowski absoluteness, $\widehat{L}^{V[G]}$ is $T_1$. 
    Since a failure for an interpretation to be $T_1$ is upwards absolute, we must have that $\widehat{L}^V$ is $T_1$.
\end{proof}

We now investigate when a locale interprets to a Hausdorff space. 

\begin{proposition} \label{T2_equiv}
The following are equivalent for a locale $L$: 
    \begin{enumerate}
        \item $\widehat{L}$ is $T_2$ in all forcing extensions. \label{T2all}
        \item $\widehat{L}$ is $T_2$ after forcing with $\operatorname{Coll}(\omega,{2^{|L|}})$. \label{T2coll}
        \item $L$ is $I$-Hausdorff. \label{IHaus}
        \item For all locales $L'$ and all frame maps $f,g\colon L\to L'$ with $f\neq g$, there are $\ell,\ell'\in L$ such that $\ell\wedge\ell'=0$ but $f(\ell)\wedge g(\ell')\neq 0$. \label{T2map}
        \item For all complete Boolean algebras $\mathbb{B}$ and all frame maps $f,g\colon L\to \mathbb{B}$ with $f\neq g$, there are $\ell,\ell'\in L$ such that $\ell\wedge\ell'=0$ but $f(\ell)\wedge g(\ell')\neq 0$. \label{T2mapBA}.
    \end{enumerate}
\end{proposition}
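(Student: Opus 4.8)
The plan is to split the five conditions into a ``forcing block'' $\{(\ref{T2all}),(\ref{T2coll}),(\ref{T2map}),(\ref{T2mapBA})\}$, handled by the machinery of Section \ref{force_section}, and the purely frame-theoretic equivalence $(\ref{IHaus})\Leftrightarrow(\ref{T2map})$. Two implications are immediate: $(\ref{T2all})\Rightarrow(\ref{T2coll})$ since the collapse is one particular forcing, and $(\ref{T2map})\Rightarrow(\ref{T2mapBA})$ since complete Boolean algebras are locales. I would close the cycle by proving $(\ref{T2mapBA})\Rightarrow(\ref{T2all})$, $(\ref{T2all})\Rightarrow(\ref{T2map})$, and $(\ref{T2coll})\Rightarrow(\ref{T2mapBA})$ inside the forcing block, and then $(\ref{T2map})\Leftrightarrow(\ref{IHaus})$ separately.

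The key dictionary, coming from Proposition \ref{pt-map}, is that a failure of $T_2$ in a forcing extension is the same data as a failure of the map condition: in any extension two points $x,y$ of $\widehat L$ fail to be separated exactly when there are no $\ell\in x,\ell'\in y$ with $\ell\wedge\ell'=0$ (using $\widehat{\ell}\cap\widehat{\ell'}=\widehat{\ell\wedge\ell'}$ and $\widehat 0=\varnothing$). For $(\ref{T2mapBA})\Rightarrow(\ref{T2all})$ I argue contrapositively: if $\widehat L$ is not $T_2$ in some extension, fix a condition $c$ forcing that names $\dot x,\dot y$ are distinct and non-separable, and set $f(\ell)=c\wedge\llbracket\ell\in\dot x\rrbracket$, $g(\ell)=c\wedge\llbracket\ell\in\dot y\rrbracket$, which are frame maps $L\to\,\downarrow c$ by Proposition \ref{pt-map}; distinctness gives $f\neq g$ and non-separability gives $f(\ell)\wedge g(\ell')=0$ whenever $\ell\wedge\ell'=0$, contradicting (\ref{T2mapBA}). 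For $(\ref{T2all})\Rightarrow(\ref{T2map})$ I run the dictionary in reverse: given $f\neq g\colon L\to L'$ with $f(\ell)\wedge g(\ell')=0$ for all $\ell\wedge\ell'=0$, force over a collapse with $\lambda\geq|L|+|L'|$ (so items (\ref{nonz}) and (\ref{sym}) of Proposition \ref{l_struct} apply) a generic point $p$ of $\widehat{L'}$ with $f(\ell_0)\in p$ while $g(\ell_0)\notin p$ for some $\ell_0$ witnessing $f\neq g$; then $\widehat f(p)=f^{-1}p$ and $\widehat g(p)=g^{-1}p$ are distinct, and are non-separable because $\ell\in f^{-1}p$, $\ell'\in g^{-1}p$, $\ell\wedge\ell'=0$ would force $0=f(\ell)\wedge g(\ell')\in p$.

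The subtle forcing step is $(\ref{T2coll})\Rightarrow(\ref{T2mapBA})$, where an arbitrary complete Boolean algebra witness must be replaced by the single fixed collapse. Given a failure of (\ref{T2mapBA}) via $\mathbb B,f,g$, I pass to the complete subalgebra $\mathbb B_0\leq\mathbb B$ generated by the ranges of $f$ and $g$; being completely generated by at most $|L|$ elements it has size at most $2^{|L|}$, so $\operatorname{Coll}(\omega,2^{|L|})$ adds a $V$-generic filter $H$ for $\mathbb B_0$ exactly as in the proof of Proposition \ref{l_struct}. The induced points $\{\ell:f(\ell)\in H\}$ and $\{\ell:g(\ell)\in H\}$ are distinct and non-separable in $V[H]$, and both properties persist into the full collapse extension because being a completely prime filter over $V$ and the relation $\ell\wedge\ell'=0$ refer only to $L$ and $\mathcal P(L)\cap V$ (Remark \ref{Borel}); this yields $\neg(\ref{T2coll})$. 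The main nuisance here is the bookkeeping that $\mathbb B_0$ is small enough and that non-separability survives the further generic extension.

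Finally, for $(\ref{T2map})\Leftrightarrow(\ref{IHaus})$, the point is that by the universal property of the coproduct (Definition \ref{prod_def}) a pair $f,g\colon L\to L'$ is a single frame map $h\colon L\oplus L\to L'$ with $h(\downarrow(\ell,\ell'))=f(\ell)\wedge g(\ell')$, and the hypothesis ``$f(\ell)\wedge g(\ell')=0$ whenever $\ell\wedge\ell'=0$'' says exactly that $h$ annihilates the open $d=\{(\ell,\ell'):\ell\wedge\ell'=0\}$, while ``$f=g$'' says $h$ factors through the diagonal surjection $\Delta\colon L\oplus L\to L$, $\Delta(A)=\bigvee\{s\wedge t:(s,t)\in A\}$. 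Thus (\ref{T2map}) is equivalent to the two components of the closed quotient $c(d)\colon L\oplus L\to\,\uparrow d$ agreeing, i.e. $c(d)$ factoring through $\Delta$, i.e. (by the standard fact that one frame surjection factors through another iff its sublocale is contained in the other's) $\uparrow d\subseteq\Delta_*(L)$. Since $\Delta_*(m)=\{(s,t):s\wedge t\leq m\}\supseteq d$ always, this is precisely the statement that every $U\supseteq d$ has the form $\Delta_*(m)$, which is the definition of $I$-Hausdorffness and the reformulation recalled from \cite[Proposition 2.3]{FrmLoc}. I expect this identification of $I$-Hausdorffness with ``the diagonal is a closed sublocale'' to be the conceptual heart of the argument; once it is in place, together with the forcing dictionary the remaining verifications are routine.
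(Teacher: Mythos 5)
Your architecture (the forcing block $(\ref{T2all}),(\ref{T2coll}),(\ref{T2map}),(\ref{T2mapBA})$ plus a purely frame-theoretic $(\ref{IHaus})\Leftrightarrow(\ref{T2map})$) is coherent, and three of your four nontrivial steps are correct: $(\ref{T2mapBA})\Rightarrow(\ref{T2all})$ is the paper's density argument in contrapositive form, $(\ref{T2all})\Rightarrow(\ref{T2map})$ is a correct use of items (\ref{nonz}) and (\ref{sym}) of Proposition \ref{l_struct}, and your identification of $(\ref{T2map})$ with ``the codiagonal quotient absorbs the closed sublocale $\uparrow d$'' is a clean two-directional version of the paper's $(\ref{IHaus})\Rightarrow(\ref{T2map})$ argument (the paper instead closes the cycle as $(\ref{T2all})\Rightarrow(\ref{T2coll})\Rightarrow(\ref{IHaus})\Rightarrow(\ref{T2map})\Rightarrow(\ref{T2mapBA})\Rightarrow(\ref{T2all})$). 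However, your step $(\ref{T2coll})\Rightarrow(\ref{T2mapBA})$ contains a genuine gap: the claim that the complete subalgebra $\mathbb{B}_0\leq\mathbb{B}$ completely generated by $\operatorname{ran}(f)\cup\operatorname{ran}(g)$ has size at most $2^{|L|}$ is false. By the Gaifman--Hales theorem there exist complete Boolean algebras of arbitrarily large cardinality that are completely generated by countably many elements (this is exactly why free complete Boolean algebras on $\omega$ generators do not exist), so no bound on $|\mathbb{B}_0|$ in terms of $|L|$ is available, and $\operatorname{Coll}(\omega,2^{|L|})$ need not add a $V$-generic filter for $\mathbb{B}_0$. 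Since this is the only arrow in your diagram entering the equivalence class of $(\ref{T2mapBA})$ from $(\ref{T2coll})$, the cycle does not close as written.

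The step is repairable, in two ways. First, you do not need full $\mathbb{B}_0$-genericity: the only dense sets that must be met are those induced by ground-model joins through $f$ and $g$, namely, for each $\cU\in\mathcal{P}(L)\cap V$, the set of conditions either below some $f(u)$ with $u\in\cU$ or incompatible with $f(\bigvee\cU)$ (and likewise for $g$). These dense sets live in the ordinary (finitary) subalgebra generated by $\operatorname{ran}(f)\cup\operatorname{ran}(g)$, which \emph{does} have size at most $|L|+\aleph_0$; forcing with that poset below a condition witnessing $f\neq g$ produces the filter $H$ you want, it is absorbed by $\operatorname{Coll}(\omega,2^{|L|})$, and your persistence argument (which is correct) then finishes the job. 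Alternatively, reorganize as the paper does: prove $(\ref{T2coll})\Rightarrow(\ref{IHaus})$ directly by forcing --- from a failure of $I$-Hausdorffness, fix a saturated $U\supseteq d_L$ and $(a,b)$ with $(a\wedge b,a\wedge b)\in U$ but $(a,b)\notin U$, use item (\ref{sym}) of Proposition \ref{l_struct} to find a point $z\in\widehat{\downarrow(a,b)}\setminus\widehat{U}$ in the collapse extension, and use preservation of products (Proposition \ref{adj}) to split $z$ into a non-separable pair $(x,y)$ --- and then feed the conclusion into your frame-theoretic $(\ref{IHaus})\Rightarrow(\ref{T2map})$. Either route avoids ever having to move an arbitrary complete Boolean algebra witness into the fixed collapse, which is precisely where your version breaks.
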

\begin{proof}
    Note that $(\ref{T2all})\Rightarrow (\ref{T2coll})$ and $(\ref{T2map})\Rightarrow(\ref{T2mapBA})$ are trivial. 
    We prove $(\ref{T2coll})\Rightarrow(\ref{IHaus})$, $(\ref{IHaus})\Rightarrow(\ref{T2map})$, and $(\ref{T2mapBA})\Rightarrow(\ref{T2all})$.

    We first prove $(\ref{T2coll})\Rightarrow(\ref{IHaus})$ by contraposition. 
    Recall by Proposition \ref{adj} that interpretation preserves products of locales. 
Now recall that $L\oplus L$ consists of downsets in $L\times L$ which are closed under taking one sided joins with the other coordinate fixed. 
Moreover, letting $d_L=\{(U,V)\mid U\wedge V=\varnothing\}$, $L$ is $I$-Hausdorff if and only if whenever $U\supseteq d_L$ is in $L\oplus L$ and $(a\wedge b,a\wedge b)\in U$ then $(a,b)\in U$. 
By $\neg(\ref{IHaus})$, we may fix $U\supseteq d_L$ saturated and $a,b\in L$ such that $(a\wedge b,a\wedge b)\in U$ but $(a,b)\not\in U$; in particular, $(\downarrow(a,b))\not\leq U$. 

Suppose $G$ is generic for $\operatorname{Coll}(\omega,{2^{|L|}})$. Since $|L\oplus L|\leq 2^{|L|}$, by item \ref{sym} of Proposition \ref{l_struct}, there is a $z\in\widehat{\downarrow(a,b)}\setminus \widehat{U}$. 
Then fix $x,y\in \widehat{L}$ such that $z=(x,y)$. 
We claim that $x\neq y$ but that $x$ and $y$ cannot be separated by open sets. 
First, observe $a\in x$ and $b\in y$. 
However, since $\downarrow(a\wedge b,a\wedge b)\not\in z$ (as otherwise $z\in\widehat{U}$), we must have either $b\not\in x$ or $a\not\in y$ so that $x\neq y$. 
To see that $x$ and $y$ cannot be separated by open sets, suppose $x\in\widehat{c}$ and $y\in\widehat{d}$. 
Then $\downarrow(c,d)\in z$ so $c\wedge d\neq 0$ as $U\supseteq d_L$ and $z\not\in U$. 
By item \ref{nonz} of Proposition \ref{l_struct}, $\widehat{c}\wedge\widehat{d}=\widehat{c\wedge d}\neq\varnothing$, as desired and completing the proof of $\neg(\ref{T2coll})$. 

We now show $(\ref{IHaus})\Rightarrow \ref{T2map}$. Fix $f,g\colon L\to L'$ such that whenever $\ell\wedge \ell'=0$, we have $f(\ell)\wedge g(\ell')$; we show $f=g$. 
Consider $f\oplus g\colon L\oplus L\to L'$. 
     By hypothesis, whenever $\ell\wedge\ell'=0$, $f\oplus g(\ell\oplus\ell')=0$. 
     Since $f\oplus g$ preserves arbitrary joins, $f\oplus g(d_L)=0$. 
     In particular, for any $a\in L$, $f\oplus g(a)\cong f\oplus g(a\vee d_L)$. 
     Since $L$ is $I$-Hausdorff, any element of $L\oplus L$ above $d_L$ is of the form $(\operatorname{id}\oplus\operatorname{id})_*\ell$ for some (unique) $\ell\in\ell$. 
     In particular, $f\oplus g$ factors through $\operatorname{id}\oplus\operatorname{id}$ via some map $h$. 
     Then $h\oplus h=f\oplus g$ so $f=g=h$. 

    Finally, $(\ref{T2mapBA})\Rightarrow(\ref{T2all})$ is a typical density argument. 
    Suppose we force with a complete Boolean algebra $\mathbb{B}$, $b\in\mathbb{B}$, and $x,y$ are names for distinct points of $\widehat{L}$. 
    As in Proposition \ref{pt-map}, $x$ and $y$ correspond to frame maps $f,g\colon L\to\downarrow b$ respectively. By (\ref{T2mapBA}), there are $\ell,\ell'\in L$ such that $\ell\wedge\ell'=0$ but $f(\ell)\cap g(\ell')\neq 0$. 
    Then $f(\ell)\wedge g(\ell')$ forces that $\ell\in x$, $\ell'\in y$, and no point of $\widehat{L}$ contains both $\ell$ and $\ell'$. 
    In particular, the basic open sets determined by $\ell$ and $\ell'$ separate $x$ and $y$.

\end{proof}
A nearly identical analysis to Corollary \ref{Most_T1} yields the following:
\begin{corollary} \label{most_haus}
    Suppose $M$ is a transitive model of set theory and $M\models L$ is an $I$-Hausdorff locale. 
    Then $\widehat{L}^V$ is a Hausdorff space. 
\end{corollary}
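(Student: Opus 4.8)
The plan is to mirror the proof of Corollary \ref{Most_T1}, using the collapse characterization of $I$-Hausdorffness from Proposition \ref{T2_equiv} together with Mostowski absoluteness. First I would fix $G$ to be $V$-generic for $\operatorname{Coll}(\omega,(2^{|L|})^M)$; since every dense subset of this poset lying in $M$ also lies in $V$, $G$ is automatically $M$-generic. Applying Proposition \ref{T2_equiv} inside $M$ — specifically the implication $(\ref{IHaus})\Rightarrow(\ref{T2coll})$ — to the hypothesis $M\models L$ is $I$-Hausdorff yields that, inside $M$, forcing with $\operatorname{Coll}(\omega,2^{|L|})$ makes $\widehat{L}$ Hausdorff. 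Hence $\widehat{L}^{M[G]}$ is a Hausdorff space.

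The next step is to recast Hausdorffness as a statement in the parameters $L$ and $\mathcal{P}(L)\cap M$ that is absolute. The subtlety is that the literal separation condition $\widehat{\ell}\cap\widehat{\ell'}=\varnothing$ is not absolute between models, since it depends on which completely prime filters happen to exist. To fix this, I would replace it by the purely localic condition $\ell\wedge\ell'=0$: in $M[G]$ the cardinal $(2^{|L|})^M$ is countable, so item (\ref{nonz}) of Proposition \ref{l_struct} applies and gives $\widehat{m}^{M[G]}=\varnothing$ iff $m=0$; consequently $\widehat{\ell}\cap\widehat{\ell'}=\widehat{\ell\wedge\ell'}=\varnothing$ iff $\ell\wedge\ell'=0$. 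Thus $\widehat{L}^{M[G]}$ being Hausdorff is equivalent in $M[G]$ to the assertion
\[\forall x,y\in\widehat{L}\;\big(x\neq y\to\exists \ell,\ell'\in L\,(\ell\in x\wedge \ell'\in y\wedge \ell\wedge\ell'=0)\big),\]
which, by Remark \ref{Borel} and the countability of $\mathcal{P}(L)\cap M$ in $M[G]$, is $\boldsymbol\Pi^1_1$ with real parameters. Mostowski absoluteness between the transitive models $M[G]\subseteq V[G]$ then transfers this statement to $V[G]$, so $\widehat{L}^{V[G]}$ is Hausdorff.

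Finally I would descend from $V[G]$ to $V$ exactly as in Corollary \ref{Most_T1}, by noting that a failure of Hausdorffness is upward absolute: if distinct $x,y\in\widehat{L}^V$ admitted no separation, then for every $\ell\in x$ and $\ell'\in y$ the set $\widehat{\ell}^V\cap\widehat{\ell'}^V$ would be nonempty, and each such witnessing filter persists into $V[G]$, so $x,y$ would remain inseparable there, contradicting that $\widehat{L}^{V[G]}$ is Hausdorff. I expect the main obstacle to be the second step — getting the separation condition into an absolute, parameter-light form — since this is where one must trade the model-dependent topological statement $\widehat{\ell}\cap\widehat{\ell'}=\varnothing$ for the absolute algebraic statement $\ell\wedge\ell'=0$, a move that is only valid after the collapse has forced enough points to exist via item (\ref{nonz}).
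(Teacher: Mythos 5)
Your proposal is correct and takes essentially the same approach as the paper: collapse $(2^{|L|})^M$, obtain Hausdorffness of $\widehat{L}^{M[G]}$ from Proposition \ref{T2_equiv}, recast it as the $\boldsymbol\Pi^1_1$ assertion with $\ell\wedge\ell'=0$ in place of disjointness of basic open sets, transfer to $V[G]$ by Mostowski absoluteness, and descend to $V$ by absoluteness. Your explicit justification for trading $\widehat{\ell}\cap\widehat{\ell'}=\varnothing$ for $\ell\wedge\ell'=0$ via item (\ref{nonz}) of Proposition \ref{l_struct} is precisely the implicit content behind the formula the paper writes down, and your contrapositive descent from $V[G]$ to $V$ is the paper's downward-absoluteness remark, so the two arguments coincide.
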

\begin{proof}
    Suppose $L$ is $I$-Hausdorff in $M$ and let $G$ be $V$-generic for $\operatorname{Coll}(\omega,(2^{|L|})^M)$. 
    In $M[G]$, the $\Pi_1^1(L,\mathcal{P}(L)\cap M)$ assertion
    \[\forall x,y\in\widehat{L}\exists \ell,\ell'\in L(\ell\wedge\ell'=0, \ell\in x, \ell'\in y)\]
    is true, so the same assertion is true in $V[G]$ by Mostowski absoluteness. 
    Since this formula is downwards absolute, it also holds in $V$ and implies that $\widehat{L}^V$ is a Hausdorff space. 
\end{proof}

We now investigate the axiom $T_3$. 
For $\ell,\ell'\in L$, we write $\ell'\prec\ell$ and say $\ell'$ is well below $\ell$ if there is a $\ell''$ such that $\ell'\wedge\ell''=0$ and $\ell\vee\ell''=1$. 
Recall that a locale $L$ is \emph{regular} if for every $\ell\in L$, $\ell=\bigvee\{\ell'\mid \ell'\prec\ell\}$. 
Note this definition is a natural extension of the spatial one: a $T_0$ space $X$ is regular if and only if whenever $x\in X$ and $U\subseteq X$ is open, there is an open $W\ni x$ such that $\overline{W}]\subseteq U$. 
\begin{proposition}
    The following are equivalent:
    \begin{enumerate}
        \item $L$ is regular; \label{reg}
        \item $\widehat{L}$ is $T_3$ in all forcing extensions. \label{T3force}
    \end{enumerate}
\end{proposition}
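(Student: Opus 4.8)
The plan is to show that, in any forcing extension, the well-below relation $\ell'\prec\ell$ in $L$ corresponds exactly to the topological relation $\overline{\widehat{\ell'}}\subseteq\widehat{\ell}$ among basic open sets of $\widehat L$, so that localic regularity of $L$ matches the neighborhood condition defining $T_3$ on $\widehat L$. The bridge in one direction is elementary: if $\ell'\prec\ell$, witnessed by $\ell''$ with $\ell'\wedge\ell''=0$ and $\ell\vee\ell''=1$, then $\widehat{\ell'}\cap\widehat{\ell''}=\widehat{\ell'\wedge\ell''}=\widehat 0=\varnothing$ forces $\overline{\widehat{\ell'}}\subseteq\widehat L\setminus\widehat{\ell''}$ (since $\widehat{\ell''}$ is open), while $\widehat\ell\cup\widehat{\ell''}=\widehat{\ell\vee\ell''}=\widehat L$ gives $\widehat L\setminus\widehat{\ell''}\subseteq\widehat\ell$; hence $\overline{\widehat{\ell'}}\subseteq\widehat\ell$. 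I will carry out the two implications $(\ref{reg})\Rightarrow(\ref{T3force})$ and $(\ref{T3force})\Rightarrow(\ref{reg})$ separately, the latter by contraposition.

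For $(\ref{reg})\Rightarrow(\ref{T3force})$ I would argue directly in an arbitrary extension, using only that every point of $\widehat L$ is a completely prime filter over the ground model and that regularity is a ground-model statement about $L$. Given a point $x$ and a basic open $\widehat\ell\ni x$ (so $\ell\in x$), regularity gives $\ell=\bigvee\{\ell'\mid\ell'\prec\ell\}$; since this join is taken over a ground-model subset of $L$, complete primality yields some $\ell'\prec\ell$ with $\ell'\in x$, and the elementary bridge above shows $x\in\widehat{\ell'}$ with $\overline{\widehat{\ell'}}\subseteq\widehat\ell$, establishing the neighborhood condition. To handle the separation clause of $T_3$, I would note the same mechanism gives Hausdorffness: for distinct points $x\neq y$ pick $\ell\in x\setminus y$ (swapping $x,y$ if necessary), shrink to $\ell'\prec\ell$ with witness $\ell''$; since $\ell\vee\ell''=1\in y$ and $y$ is completely prime with $\ell\notin y$, we get $\ell''\in y$, and then $\widehat{\ell'}\ni x$ and $\widehat{\ell''}\ni y$ are disjoint opens.

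For $(\ref{T3force})\Rightarrow(\ref{reg})$, suppose $L$ is not regular and fix $\ell_0$ with $a=\bigvee\{\ell'\mid\ell'\prec\ell_0\}$ strictly below $\ell_0$ (every such $\ell'$ satisfies $\ell'=\ell'\wedge(\ell_0\vee\ell'')=\ell'\wedge\ell_0\leq\ell_0$, so $a\leq\ell_0$). I would force with $\operatorname{Coll}(\omega,\lambda)$ for $\lambda\geq|L|$ and apply item (\ref{sym}) of Proposition \ref{l_struct} to $\ell_0\not\leq a$ to obtain a point $x\in\widehat{\ell_0}\setminus\widehat a$, claiming it witnesses a failure of the $T_3$ neighborhood condition for the open set $\widehat{\ell_0}$. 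Indeed, if some open $W$ satisfied $x\in W\subseteq\overline W\subseteq\widehat{\ell_0}$, then shrinking to a basic $\widehat{\ell'}$ with $x\in\widehat{\ell'}\subseteq W$ gives $\overline{\widehat{\ell'}}\subseteq\widehat{\ell_0}$. Here item (\ref{comp}) is essential: it identifies $\overline{\widehat{\ell'}}$ with $\widehat L\setminus\widehat{\ell'\to0}$, so the containment becomes $\widehat L=\widehat{\ell_0}\cup\widehat{\ell'\to0}=\widehat{\ell_0\vee(\ell'\to0)}$, whence $\ell_0\vee(\ell'\to0)=1$ by item (\ref{sym}). Since $\ell'\wedge(\ell'\to0)=0$ by infinite distributivity, this says precisely $\ell'\prec\ell_0$, so $\ell'\leq a$; but then $\ell'\in x$ and upward closure force $a\in x$, contradicting $x\notin\widehat a$.

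I expect the backward direction to be the main obstacle, and within it the delicate point is converting the purely topological hypothesis $\overline{\widehat{\ell'}}\subseteq\widehat{\ell_0}$ holding in the extension back into the localic identity $\ell_0\vee(\ell'\to0)=1$; this is exactly what item (\ref{comp}) of Proposition \ref{l_struct} supplies, together with the routine reduction from an arbitrary neighborhood $W$ to a basic one. If one then wished to pass from forcing extensions to all transitive models, I would, as in Corollaries \ref{Most_T1} and \ref{most_haus}, observe that the relevant failure is a $\boldsymbol\Pi^1_1(L,\mathcal P(L)\cap M)$ assertion and invoke Mostowski absoluteness together with upward absoluteness of the failure.
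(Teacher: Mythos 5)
Your proof is correct, and while it rests on the same two pillars as the paper's (items (\ref{sym}) and (\ref{comp}) of Proposition \ref{l_struct}, with the Heyting negation $\ell'\to 0$ serving as the canonical witness of $\ell'\prec\ell$), both directions are packaged genuinely differently. For $(\ref{reg})\Rightarrow(\ref{T3force})$, the paper obtains Hausdorffness by citing the fact that regular locales are $I$-Hausdorff together with Proposition \ref{T2_equiv}, and then notes that the ground-model families $\{\ell'\mid \ell'\prec\ell\}$ provide the required covers; you instead prove the neighborhood condition and $T_2$ directly from complete primality over the ground model via the bridge $\ell'\prec\ell\Rightarrow\overline{\widehat{\ell'}}\subseteq\widehat{\ell}$. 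A side benefit of your version is that it uses no forcing machinery whatsoever, so it applies verbatim over any transitive $M$ and yields the paper's subsequent corollary without the absoluteness detour. For $(\ref{T3force})\Rightarrow(\ref{reg})$, the paper introduces the tailored poset $\mathbb{P}_\ell=\{a\in L\mid a\neq\bigvee\{b\mid b\prec\ell,\ b\leq a\}\}$, forces with $\mathbb{P}_\ell\times\operatorname{Coll}(\omega,2^{|L|})$, and runs a density argument showing that no condition $(a,p)$ can force a basic neighborhood of the generic point to have closure inside $\widehat{\ell}$, since such a condition $a$ would itself be well below $\ell$, contradicting $a\in\mathbb{P}_\ell$. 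You dispense with the tailored forcing entirely: item (\ref{sym}) applied to $\ell_0\not\leq a$, where $a=\bigvee\{\ell'\mid\ell'\prec\ell_0\}$, hands you a point $x\in\widehat{\ell_0}\setminus\widehat{a}$ after $\operatorname{Coll}(\omega,|L|)$, and items (\ref{comp}) and (\ref{sym}) show that any basic neighborhood of $x$ with closure inside $\widehat{\ell_0}$ would be well below $\ell_0$, hence below $a$, forcing $a\in x$ --- a contradiction. Your observation that avoiding $\widehat{a}$ is the only property of the point that matters replaces the paper's generic-point density argument with a single application of item (\ref{sym}) and a smaller collapse; indeed, the paper's $\mathbb{P}_\ell$-generic point necessarily avoids $\widehat{a}$ as well, so the two arguments produce witnesses of the same kind, yours just more cheaply.
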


    \begin{proof}
        For $(\ref{reg})\Rightarrow(\ref{T3force})$, first recall that by \cite[Proposition 5.4.2]   {FrmLoc} every regular locale is $I$-Hausdorff so $\widehat{L}$ is $T_2$ in all forcing extensions. Moreover, note a $T_1$ space with a distinguished basis is $T_3$ if and only if for every basic open $U$, there is a cover $\mathcal{U}$ of $U$ such that whenever $W\in\mathcal{U}$, $\overline{W}\subseteq U$. 
        But the condition of a regular locale is precisely that this assertion occurs with $\mathcal{U}$ a collection of basic open sets coded in the ground model.

        Conversely, suppose $L$ is not regular and fix some $\ell\in L$ which is not the join of conditions well below $\ell$. 
        Let $\mathbb{P}_\ell=\{a\in L\mid a\neq\bigvee\{b\mid b\prec\ell, b\leq a\}\}$ with the order inherited from $L$. 
        Note that if $a\in\mathbb{P}_\ell$ and $\bigvee\cU=a$ then there is some $b\in\cU\cap\mathbb{P}_L$ so the upwards closure of the generic for $\mathbb{P}_L$ is a point of $\widehat{L}$. 
        We force with $\mathbb{Q}=\mathbb{P}_\ell\times\operatorname{Coll}(\omega,2^{|L|})$; 
        note that $\mathbb{Q}$ is equivalent to $\operatorname{Coll}(\omega,|L|)$. We claim that if $x$ is the generic point added by $\mathbb{P}_\ell$ cannot be separated from $\widehat{L}\setminus\widehat{\ell}$. 
        Indeed, suppose some $(a,p)$ forces that $b\in L$ determines a basic open set containing $x$ with $\overline{\widehat{b}}\subseteq\widehat{\ell}$. 
        By extending $a$ if necessary, we may assume that $a\leq b$. 
        We claim $a$ itself is well below $\ell$ as witnessed by $d:=\bigvee\{c\mid a\wedge c=0\}$, contrary to the definition of $\mathbb{P}_\ell$. 
        Indeed, note that by item \ref{comp} of Proposition \ref{l_struct}, $\widehat{d}$ is the interior of the complement of $\widehat{a}$ so that $\widehat{d}\cup\widehat{\ell}=\widehat{L}$. 
        By item \ref{sym} of Proposition \ref{l_struct}, $d\vee\ell=1$ as desired. 
    \end{proof}
    We once again may conclude the interpretation of a regular locale is regular in all outer models, not simply forcing extensions. 
    \begin{corollary}
        Suppose $M$ is a transitive model of set theory and $M\models L$ is regular locale. 
        Then $\widehat{L}$ is $T_3$. 
    \end{corollary}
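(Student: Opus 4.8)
The plan is to prove this directly from the definition of filters completely prime over $M$, which for regularity sidesteps the collapse-plus-Mostowski-absoluteness machinery used for Corollaries \ref{Most_T1} and \ref{most_haus}. Regularity is the one separation axiom whose witnesses already live in the ground model, so no absoluteness is needed for the regularity half of $T_3$. For the $T_1$ (indeed Hausdorff) half, I would simply cite \cite[Proposition 5.4.2]{FrmLoc} that every regular locale is $I$-Hausdorff, so $M \models L$ is $I$-Hausdorff, and then invoke Corollary \ref{most_haus} to conclude that $\widehat{L}$ is Hausdorff, hence $T_1$.

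For the regularity of the space, fix a point $x \in \widehat{L}$ and a basic open set $\widehat{\ell}$ with $x \in \widehat{\ell}$, i.e. $\ell \in x$. First observe that the well-below relation is absolute between $M$ and $V$: since $\ell' \prec \ell$ means $\exists \ell'' \in L\,(\ell' \wedge \ell'' = 0 \wedge \ell \vee \ell'' = 1)$, and $L$ together with its finite meets, binary joins, $0$ and $1$ lies in $M$, any witness in $V$ already belongs to $L \subseteq M$. Consequently $S_\ell := \{\ell' \mid \ell' \prec \ell\}$ is the same set whether computed in $M$ or in $V$, and by separation in $M$ it is an element of $\mathcal{P}(L) \cap M$. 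Because $L$ is regular in $M$, $\bigvee S_\ell = \ell$, and this join is absolute. Applying the defining property of a filter completely prime over $M$ to $U = \ell \in x$ and $\cU = S_\ell \in \mathcal{P}(L)\cap M$ yields some $\ell' \in x$ with $\ell' \prec \ell$.

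It remains to turn $\ell' \prec \ell$ into topological separation. Let $\ell''$ witness $\ell' \prec \ell$, so $\ell' \wedge \ell'' = 0$ and $\ell \vee \ell'' = 1$. Applying $\pi$, which preserves finite meets and joins of $M$-sets (here the two-element, hence finite, set $\{\ell,\ell''\}\in \mathcal{P}(L)\cap M$), gives $\widehat{\ell'} \cap \widehat{\ell''} = \widehat{0} = \varnothing$ and $\widehat{\ell} \cup \widehat{\ell''} = \widehat{L}$. Since $\widehat{\ell''}$ is open, $\widehat{L} \setminus \widehat{\ell''}$ is closed and contains $\widehat{\ell'}$, so $\overline{\widehat{\ell'}} \subseteq \widehat{L}\setminus\widehat{\ell''} \subseteq \widehat{\ell}$. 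Thus $x \in \widehat{\ell'} \subseteq \overline{\widehat{\ell'}} \subseteq \widehat{\ell}$, which is precisely the $T_3$ separation condition relative to the basis $\{\widehat{m} \mid m \in L\}$; it extends to arbitrary open supersets in the usual way. Together with $T_1$ this gives that $\widehat{L}$ is $T_3$.

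The only genuinely load-bearing step is the second paragraph: one must check that $S_\ell \in \mathcal{P}(L)\cap M$ and that $\bigvee S_\ell = \ell$ as computed in $M$, so that complete primeness over $M$ applies. This is exactly where regularity of $L$ in $M$ is consumed, and it is the reason the argument avoids absoluteness entirely. If instead one prefers to mirror the proofs of the earlier corollaries, the statement $\forall x \in \widehat{L}\,\forall \ell\,(\ell \in x \to \exists \ell'(\ell' \prec \ell \wedge \ell' \in x))$ is $\boldsymbol\Pi^1_1(L, \mathcal{P}(L)\cap M)$ by Remark \ref{Borel} and can be transferred from a collapse extension by Mostowski absoluteness; but the direct route above shows it holds outright in $V$.
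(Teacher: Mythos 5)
Your proof is correct, but it takes a genuinely different route from the paper's. The paper proves this corollary by the same template as Corollaries \ref{Most_T1} and \ref{most_haus}: pass to a generic extension by $\operatorname{Coll}(\omega,(2^{|L|})^M)$, apply the forcing-extension equivalence (regular $\Leftrightarrow$ $T_3$ in all forcing extensions) inside $M[G]$, transfer the resulting $\boldsymbol\Pi^1_1(L,\mathcal{P}(L)\cap M)$ statement to $V[G]$ by Mostowski absoluteness, and then pull it down to $V$ by upwards absoluteness of failures. You instead observe that the regularity half of $T_3$ needs no forcing at all: the relation $\ell'\prec\ell$ is absolute between $M$ and $V$ (any witness $\ell''$ lies in $L\subseteq M$), so $S_\ell=\{\ell'\mid\ell'\prec\ell\}$ belongs to $\mathcal{P}(L)\cap M$ and satisfies $\bigvee S_\ell=\ell$ in $M$, whence complete primeness of $x$ over $M$ directly yields $\ell'\in x$ with $\ell'\prec\ell$; the witness $\ell''$ then gives $\widehat{\ell'}\cap\widehat{\ell''}=\varnothing$ and $\widehat{\ell}\cup\widehat{\ell''}=\widehat{L}$, so $\overline{\widehat{\ell'}}\subseteq\widehat{L}\setminus\widehat{\ell''}\subseteq\widehat{\ell}$. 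This is exactly the structural reason regularity (unlike $T_U$ and $I$-Hausdorffness) transfers so cleanly: its witnesses are ground-model elements of $L$ rather than points. What the paper's template buys is uniformity across all three separation axioms; what your argument buys is a self-contained, forcing-free proof of the separation condition that makes the mechanism visible. One further simplification is available to you: the appeal to \cite[Proposition 5.4.2]{FrmLoc} and Corollary \ref{most_haus} for the $T_1$ half (which silently re-imports the collapse-plus-absoluteness machinery) is unnecessary, since $\widehat{L}$ is automatically $T_0$ (distinct filters differ on some $\ell$, and $\widehat{\ell}$ separates them), and a $T_0$ space satisfying your basic-open regularity condition is already Hausdorff, hence $T_1$. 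With that change the entire corollary is proved without any forcing or absoluteness.
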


    Unlike $T_1,T_2,$ and $T_3$, when interpretations satisfy the axiom $T_4$ is unclear and does not obviously correspond to the localic notion of normality in either direction. 
We say a locale $L$ is \emph{normal} if whenever $a\vee b=1$ then there are $u,v\in L$ such that $u\wedge v=0$ and $a\vee v=1=b\vee u$.

    \begin{question}
        Suppose $L$ is normal. 
        Is $\widehat{L}$ $T_4$ in all forcing extensions?
    \end{question}
    The converse is also unclear. 
    \begin{question}
        Suppose $\widehat{L}$ is $T_4$ in all forcing extensions. Must $L$ be normal?
    \end{question}

    \section{More reflected properties of locales} \label{More_prop_section}
    \subsection{Compactness}
    \begin{definition}
        A locale $L$ is \emph{compact} if whenever $A\subseteq L$ with $\bigvee A=1$, there is a finite $B\subseteq A$ with $\bigvee B=1$. 
    \end{definition}
    We now show that compactness of a locale corresponds precisely with compactness of interpretations. 
    \begin{proposition}
        The following are equivalent: 
        \begin{enumerate}
            \item $L$ is compact; \label{cpt_locale}
            \item In all forcing extensions, every maximal ideal in $L$ is completely prime over the ground model; \label{ideal}
            \item In all forcing extensions, $\widehat{L}$ is compact; \label{cpt_all}
            \item $\widehat{L}$ is compact after forcing with $\operatorname{Coll}(\omega,|L|)$. \label{cpt_force}
        \end{enumerate}
    \end{proposition}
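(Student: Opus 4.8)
The plan is to establish the cycle $(\ref{cpt_locale})\Rightarrow(\ref{ideal})\Rightarrow(\ref{cpt_all})\Rightarrow(\ref{cpt_force})\Rightarrow(\ref{cpt_locale})$, noting that $(\ref{cpt_all})\Rightarrow(\ref{cpt_force})$ is immediate since the Levy collapse is one particular forcing. The single observation that makes everything run is that forcing adds new \emph{subsets} of $L$ (new covers, new ideals) but no new \emph{elements}: since $L$ is a fixed ground-model set, any $\ell\in L$ appearing in an extension already lies in $V$, and any \emph{finite} subset of $L$ is likewise in $V$. This is precisely what lets compactness of $L$, a statement about ground-model covers, control ideals and covers that live in an extension.

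For $(\ref{cpt_locale})\Rightarrow(\ref{ideal})$ I would work in an arbitrary extension $V[G]$ and take a maximal ideal $I\subseteq L$. Since a frame is a distributive lattice, $I$ is automatically prime: if $a,b\notin I$ then maximality gives $i_a\vee a=i_b\vee b=1$ for some $i_a,i_b\in I$, and distributing $(i_a\vee a)\wedge(i_b\vee b)=1$ shows $a\wedge b\notin I$. For complete primeness over $V$, suppose $\mathcal U\in V$ with $\mathcal U\subseteq I$ but $u^\ast:=\bigvee\mathcal U\notin I$; note $u^\ast\in L\subseteq V$. Maximality yields $j\in I$ with $j\vee u^\ast=1$, and crucially $j\in L\subseteq V$, so $\{j\}\cup\mathcal U$ is a \emph{ground-model} cover. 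Compactness of $L$ extracts a finite $\mathcal U_0\subseteq\mathcal U$ with $j\vee\bigvee\mathcal U_0=1$; but this element lies in $I$, contradicting properness. Hence $\bigvee\mathcal U\in I$ and $I$ is completely prime over $V$. For $(\ref{ideal})\Rightarrow(\ref{cpt_all})$, in any $V[G]$ I take a basic open cover $\{\widehat\ell\mid\ell\in A\}$ of $\widehat L$ and, for contradiction, assume no finite subcover. Form the ideal $D=\{\ell\mid \ell\le\bigvee A_0\text{ for some finite }A_0\subseteq A\}$. Each finite $A_0\subseteq A$ lies in $V$, so complete primeness shows that $\bigvee A_0=1$ would make $\{\widehat\ell\mid\ell\in A_0\}$ a cover; as there is no finite subcover, every such join is $<1$, so $D$ is proper. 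Extending $D$ to a maximal ideal $I$ by Zorn and invoking $(\ref{ideal})$, the complement $L\setminus I$ is a completely prime filter over $V$, i.e.\ a point of $\widehat L$ meeting no $\widehat\ell$ with $\ell\in A$, contradicting that the family covers.

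Finally, for $(\ref{cpt_force})\Rightarrow(\ref{cpt_locale})$ I argue the converse transfer. If $L$ is not compact, fix $A\in V$ with $\bigvee A=1$ admitting no finite subcover. After forcing with $\operatorname{Coll}(\omega,|L|)$, complete primeness over $V$ makes $\{\widehat\ell\mid\ell\in A\}$ a genuine cover of $\widehat L$ (every point contains $1=\bigvee A$, hence meets $A$). Compactness of $\widehat L$ returns a finite $B\subseteq A$ with $\bigcup_{\ell\in B}\widehat\ell=\widehat{\bigvee B}=\widehat L$, and item (\ref{sym}) of Proposition \ref{l_struct} then forces $\bigvee B=1$, contradicting the choice of $A$.

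I expect the main obstacle to be purely organizational rather than deep: keeping scrupulous track of which objects are absolute between $V$ and $V[G]$ (elements of $L$ and finite subsets of $L$ are, while arbitrary covers $A$ and ideals $I$ are not), and verifying the dictionary between completely prime ideals over $V$ and points of $\widehat L$, including that the complement of a completely prime ideal is a point and that maximal proper ideals exist (Zorn) and are prime (distributivity). Once this bookkeeping is in place each implication is a short argument, and the only genuinely ``topological'' input is the single application of Proposition \ref{l_struct} in the last step.
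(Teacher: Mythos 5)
Your proposal is correct and follows essentially the same route as the paper: the same cycle $(\ref{cpt_locale})\Rightarrow(\ref{ideal})\Rightarrow(\ref{cpt_all})\Rightarrow(\ref{cpt_force})\Rightarrow(\ref{cpt_locale})$, with absoluteness of elements and finite subsets of $L$ driving the first implication, a maximal ideal (obtained by Zorn) whose complement is a point driving the second, and item (\ref{sym}) of Proposition \ref{l_struct} closing the loop after the collapse. The only differences are expository: you spell out details the paper leaves implicit, such as primality of maximal ideals via distributivity and the dictionary between completely prime ideals over $V$ and points of $\widehat{L}$.
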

    \begin{proof}
        For $(\ref{cpt_locale})\Rightarrow(\ref{ideal})$, suppose $I\subseteq L$ is a maximal ideal in some forcing extension. 
        Since $L$ remains a distributive lattice, note that $I$ is a prime ideal. 
        If $A\subseteq I$ is in the ground model, by maximality of $I$ either $\bigvee A\in I$ or there is an $a\in I$ such that $a\vee\bigvee A=1$. 
        In the second case, since $L$ is compact in the ground model, there is a finite $F\subseteq A$ such that $a\vee\bigvee F=1$. 
        Since $a\vee\bigvee F\in I$, we conclude that indeed the first case must occur; that is, $\bigvee A\in I$. 

        For $(\ref{ideal})\Rightarrow(\ref{cpt_all})$, suppose $\cU\subseteq L$ satisfies that whenever $F\subseteq\cU$ is finite, $\bigvee F\neq 1$. 
        That is to say, $\cU$ codes a family of basic open subsets of $\widehat{L}$ such that no finite family covers all of $\widehat{L}$. 
        Then the ideal generated by $\cU$ is proper so let $I\supseteq\cU$ be a maximal ideal. 
        Then by $(\ref{ideal})$, $L\setminus I$ is a point of $\widehat{L}$.
        Since $\cU\subseteq I$, $L\setminus I$ is not contained in $\bigcup_{\ell\in\cU}\widehat{\ell}$ so $\cU$ does not code a cover of $\widehat{L}$. 
        
        Note that $(\ref{cpt_all})\Rightarrow(\ref{cpt_force})$ is trivial.
        For $(\ref{cpt_force})\Rightarrow(\ref{cpt_locale})$, suppose $L$ is not compact and fix $A$ such that $\bigvee A=1$ but there is no finite $B\subseteq A$ with $\bigvee B=1$.  
        By item \ref{sym} of Proposition \ref{l_struct} whenever $B\subseteq A$ is finite, $\bigcup_{b\in B}\widehat{b}=\widehat{\bigvee B}\neq \widehat{L}$; that is, $\{\widehat{\ell}\mid \ell\in A\}$ is an open cover with no finite subcover.  
        
    \end{proof}
    \begin{remark}
        The proof of $(\ref{cpt_locale})\Rightarrow(\ref{cpt_force})$ generalizes to show that if $M\models ZFC$ and $M\models$ $L$ is a frame then $\widehat{L}$ is compact (without needing to be in a forcing extension of $M$). 
    \end{remark}
    \begin{remark}
        If $L$ is regular then every completely prime ideal is maximal. 
        In particular, in a compact Hausdorff locale, the completely prime ideals and maximal ideals coincide, yielding another way to see that the interpretation coincides exactly with the space of maximal filters on the ground model compact subsets. 
    \end{remark}

    \subsection{Connectedness}
    There are a couple notions of a locale being connected for locales; of particular note are $L$ being connected and $L$ being $p$-connected. 
    $\widehat{L}$ being connected in all forcing extensions at least implies the stronger $p$-connected. 
    We note by \cite[XIII.4]{FrmLoc} that $p$-connected is a proper strengthening of connectedness. 
    \begin{definition}
        A frame $L$ is $p$-connected (or product connected) if for all frames $M$, every complemented element in $L\oplus M$ is of the form $1\oplus b$ with $b$ complemented in $M$. 
    \end{definition}
    
    We have the following implications between $p$-connectedness and connectedness in forcing extensions. 
    Obtaining connectedness in all forcing extensions from $p$-connectedness seems unlikely, though we do not have a counterexample. 
    \begin{proposition} \label{conn_equiv}
        The following are equivalent for a frame $L$:
        \begin{enumerate}
            \item $L$ is $p$-connected; \label{pconn}
            \item $\widehat{L}$ is connected in some forcing extension in which $(2^{|L|})^V$ is countable; \label{conn_one}
            \item $\widehat{L}$ is connected in all forcing extensions in which $(2^{|L|})^V$ is countable. \label{conn_all}
        \end{enumerate}
    \end{proposition}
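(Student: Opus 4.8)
The plan is to run the cycle $(\ref{pconn})\Rightarrow(\ref{conn_all})\Rightarrow(\ref{conn_one})\Rightarrow(\ref{pconn})$, the middle implication being trivial since $\operatorname{Coll}(\omega,2^{|L|})$ is one forcing making $(2^{|L|})^V$ countable. Throughout I read "$\widehat L$ is connected" as "$\widehat L$ has no clopen set besides $\varnothing$ and $\widehat L$", and I use freely that a complemented $\ell\in L$ with complement $\ell^*$ gives a basic clopen $\widehat\ell$ (with complement $\widehat{\ell^*}$), since $\widehat{\ell\wedge\ell^*}=\widehat\ell\cap\widehat{\ell^*}$ and $\widehat{\ell\vee\ell^*}=\widehat\ell\cup\widehat{\ell^*}$ for binary, hence ground-model, joins. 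The bookkeeping device is an identification of clopen sets with complemented elements of a product. First I would record a purely algebraic lemma, read off Definition \ref{prod_def}: an element $c\in L\oplus M$ equals a cylinder $1\oplus b=\{(a,m):m\le b\}$ exactly when each slice $\{a:(a,m)\in c\}$ is $\varnothing$ or $L$ (in which case $b=\bigvee\{m:(1,m)\in c\}$), and if such a $c$ is complemented then $b$ is complemented in $M$. Second, for a complete Boolean algebra $\mathbb B$ I would establish the namespace identification $L_{\widehat L}\cong L\oplus\mathbb B$: slicing the product interpretation $\widehat{L\oplus\mathbb B}=\widehat L\times\widehat{\mathbb B}$ (Corollary \ref{prod_preserve}) at the generic point $G\in\widehat{\mathbb B}$ sends $c\mapsto\{x:(x,G)\in\widehat c\}$, and by the adjunction of Proposition \ref{adj} this is an isomorphism of frames carrying complemented elements to names for clopen subsets of $\widehat L$ in $V[\mathbb B]$ and carrying the cylinders $1\oplus b$ to the forced-trivial clopens ($\varnothing$ when $b\notin G$, all of $\widehat L$ when $b\in G$).

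Granting this, $(\ref{pconn})\Rightarrow(\ref{conn_all})$ is immediate: applying $p$-connectedness with $M=\mathbb B$, every complemented element of $L\oplus\mathbb B$ is a cylinder, so every clopen subset of $\widehat L$ in $V[\mathbb B]$ is forced to be trivial, whence $\mathbb B$ forces $\widehat L$ connected; in particular this holds in every extension with $(2^{|L|})^V$ countable, realized as such a $V[\mathbb B]$.

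For $(\ref{conn_one})\Rightarrow(\ref{pconn})$ I argue the contrapositive. Suppose $L$ is not $p$-connected and fix a frame $M$ and a complemented $c\in L\oplus M$ that is not a cylinder. Given an arbitrary extension $V[G]$ with $(2^{|L|})^V$ countable, force further with $\operatorname{Coll}\bigl(\omega,2^{|L\oplus M|}\bigr)$ to reach $V[G][G']$, which adds $V$-generic filters for every ground-model poset of size at most $|L\oplus M|$, so the conclusions of Proposition \ref{l_struct} hold there for $L\oplus M$. By items (\ref{sym}) and (\ref{comp}), $\widehat c$ is clopen in $\widehat{L\oplus M}=\widehat L\times\widehat M$ with complement $\widehat{c^*}$, and by the algebraic lemma together with the order-faithfulness of item (\ref{sym}), $c$ has a nontrivial slice, so (using items (\ref{sym}) and (\ref{nonz}) to produce points of $\widehat L$ and $\widehat M$ inside and outside it) some slice $\{x:(x,y)\in\widehat c\}$ with $y\in\widehat M$ is a clopen subset of $\widehat L$ that is neither empty nor everything. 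Thus $\widehat L$ is disconnected in $V[G][G']$.

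It remains to transfer this disconnection down to $V[G]$ by absoluteness, exactly as in Corollaries \ref{Most_T1} and \ref{most_haus}. By Remark \ref{Borel}, once $\mathcal P(L)\cap V$ is countable the predicate "is a point of $\widehat L$" is Borel in a real $r$ coding $\mathcal P(L)\cap V$, so "$\widehat L$ is disconnected" — the existence of $S,T\subseteq L$ whose basic unions are nonempty, disjoint, and cover $\widehat L$ — is $\Sigma^1_2(r)$ with $r\in V[G]$; Shoenfield absoluteness between $V[G]$ and its extension $V[G][G']$ then yields a disconnection already in $V[G]$. Hence $\widehat L$ is disconnected in every extension with $(2^{|L|})^V$ countable, contradicting even $(\ref{conn_one})$, and the cycle closes. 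I expect the main obstacle to be the namespace identification $L_{\widehat L}\cong L\oplus\mathbb B$, and specifically the verification that a complemented element really corresponds to a genuine clopen — equivalently that the two one-sided join conditions of Definition \ref{prod_def} force $\widehat c\cup\widehat{c^*}$ to exhaust $\widehat L\times\widehat{\mathbb B}$ rather than leaving uncovered "boundary" pairs $(x,H)$; the slicing and the $\Sigma^1_2$ absoluteness steps are routine given the machinery already developed.
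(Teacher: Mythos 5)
Your overall architecture (cycle through $(\ref{pconn})\Rightarrow(\ref{conn_all})\Rightarrow(\ref{conn_one})\Rightarrow(\ref{pconn})$, with collapse extensions plus absoluteness doing the work) parallels the paper's, and your $(\ref{conn_one})\Rightarrow(\ref{pconn})$ direction -- slice a complemented non-cylinder at a fiber produced by items (\ref{nonz}) and (\ref{sym}) of Proposition \ref{l_struct}, then pull the disconnection down by $\Sigma^1_2$ Shoenfield absoluteness -- is sound and matches the paper's treatment (the paper routes the Shoenfield step through $(\ref{conn_one})\Rightarrow(\ref{conn_all})$ instead; the content is the same). The genuine gap is in $(\ref{pconn})\Rightarrow(\ref{conn_all})$: everything there rests on the claimed identification $L_{\widehat L}\cong L\oplus\mathbb B$, and your justification ``by the adjunction of Proposition \ref{adj} this is an isomorphism of frames'' is not correct. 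The adjunction only produces the comparison map $L\oplus\mathbb B\to L_{\widehat L}$ (induced by the counit $L\to L_{\widehat L}$ and $\mathbb B\cong L_*\to L_{\widehat L}$); the paper explicitly warns that this adjunction is not idempotent, and the claimed isomorphism is simply false for general $\mathbb B$: take $\mathbb B$ trivial and $L$ a nontrivial atomless complete Boolean algebra, so that $\widehat L=\varnothing$ and $L_{\widehat L}$ is the two-element frame while $L\oplus\mathbb B\cong L$. The isomorphism does hold when $\mathbb B$ forces $(2^{|L|})^V$ to be countable, but that is a theorem requiring exactly the machinery you have deferred, not a formal consequence of adjointness.

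Concretely, what your argument needs (and what the paper proves directly, without ever mentioning $L_{\widehat L}$) is: given names $\dot C,\dot D$ for a clopen partition of $\widehat L$, the elements $U^*=\{(\ell,b)\mid b\Vdash\widehat\ell\subseteq\dot C\}$ and $W^*=\{(\ell,b)\mid b\Vdash\widehat\ell\subseteq\dot D\}$ of $L\oplus\mathbb B$ satisfy $U^*\wedge W^*=0$ \emph{and} $U^*\vee W^*=1$, so that $p$-connectedness forces them to be cylinders and hence $\dot C$ to be trivial. The join being $1$ is the crux you flagged as ``the main obstacle'' but never argued: the paper supposes $U^*\vee W^*<1$, uses item (\ref{sym}) of Proposition \ref{l_struct} for $L\oplus\mathbb B$ in a large collapse together with preservation of products (Corollary \ref{prod_preserve}) to extract a point $(x,K)$ outside $\widehat{U^*\vee W^*}$, notes $K$ is then $V$-generic on $\mathbb B$, and derives a contradiction because the $\boldsymbol\Pi^1_1$ assertion ``every point of $\widehat L$ contains an element of $\dot C(K)$ or $\dot D(K)$'' is true in $V[K]$ and transfers upward to the collapse extension by Mostowski absoluteness (Remark \ref{Borel}). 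Without this step your proof of $(\ref{pconn})\Rightarrow(\ref{conn_all})$ does not go through. A secondary point in the same direction: to get $U^*\wedge W^*=0$ (and to realize arbitrary extensions with $(2^{|L|})^V$ countable as $V[\mathbb B]$ in a form where points exist) you need items (\ref{nonz}) and (\ref{sym}) for \emph{arbitrary} $\mathbb B$ forcing $(2^{|L|})^V$ countable, whereas Proposition \ref{l_struct} is stated only for $\operatorname{Coll}(\omega,\lambda)$; this is true, by a Rasiowa--Sikorski argument meeting the countably many ground-model dense sets, but it too needs to be said rather than cited.
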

    \begin{proof}

        We show $(\ref{pconn})\Leftrightarrow(\ref{conn_all})$ and $(\ref{conn_one})\Rightarrow(\ref{conn_all})$. 
        For $(\ref{conn_all})\Rightarrow(\ref{pconn})$,
        Suppose $L$ is not $p$-connected and fix $a\in L\oplus M$ complemented and not of the form $1\oplus b$. 
        After forcing with $\operatorname{Coll}(\omega,\max(2^{|L|},|M|))$, $\widehat{a}\subseteq\widehat{L}\times\widehat{M}$ is clopen since $a$ is complemented. 
        Since $a$ is not of the form $1\oplus b$ for any $b\in M$, we find that by item \ref{sym} of Proposition \ref{l_struct}, there is a point that is not in $\widehat{a}$ and not in $\widehat{c}$ for $c=\bigvee\{\widehat{L}\times \widehat{b}\mid b\in M,\varnothing=\widehat{a}\cap (\widehat{L}\times \widehat{b})\}$.
        In particular, $\widehat{a}$ is a clopen subset of $\widehat{L}\times \widehat{M}$ not of the form $\widehat{L}\times U$ for any open $U\subseteq\widehat{M}$ so $\widehat{L}$ must be disconnected. 

        For $(\ref{pconn})\Rightarrow(\ref{conn_all})$, suppose $\mathbb{B}$ is a complete Boolean algebra forcing that $2^{|L|}$ is countable. 
        Suppose $\mathbb{B}$ forces that $\widehat{L}$ is disconnected and let $\dot U,\dot W$ be $\mathbb{B}$-names for subsets of $L$ such that 
        \begin{itemize}
            \item for all $\ell\in U$ and $\ell'\in W$, $\ell\wedge\ell'=0$;
            \item for all $x\in\widehat{L}$ there is an $\ell\in x$ such that $\ell\in U$ or $\ell\in W$;
            \item there are $\ell,\ell'>0$ such that $\ell\in U$, $\ell'\in W$. 
        \end{itemize}
        Note this is possible by taking $U$ and $W$ to be all the basic open sets contained in each set witnessing $\widehat{L}$ is disconnected. 
        Now, let $U^*\in L\oplus\mathbb{B}$ be 
        \[\{(\ell,b)\mid b\Vdash \ell\in U\}\]
        and $W^*\in L\oplus\mathbb{B}$ be 
        \[\{(\ell,b)\mid b\Vdash\ell\in W\}.\]
        Note that $U^*\wedge W^*=0$ and that $0<U^*,V^*<1$. 
        Suppose for contradiction that $U^*\vee W^*<1$. 
        Then by item \ref{sym} of Proposition \ref{l_struct}, in some large collapse extension there is an $(x,G)\in\widehat{L\oplus B}\setminus\widehat{(U^*\vee W^*)}=\widehat{L}\times\widehat{B}\setminus\widehat{(U^*\vee W^*)}$. 
        Then $x$ is a point of $\widehat{L}$ not containing any element of $\dot U(G)$ nor $\dot W(G)$. 
        But the assertion that every point of $\widehat{L}$ contains an element of $\dot U(G)$ or $\dot W(G)$ is $\boldsymbol\Pi^1_1$ and therefore absolute between any universes containing $\dot U(G)$, $\dot W(G)$, and in which $(2^{|L|})^V$ is countable. 

        Finally for $(\ref{conn_one})\Rightarrow(\ref{conn_all})$, we note that the assertion that for any $U,W\subseteq L$ satisfying 
        \begin{itemize}
            \item for all $\ell\in U$ and $\ell'\in W$, $\ell\wedge\ell'=0$;
            \item for all $x\in\widehat{L}$ there is an $\ell\in x$ such that $\ell\in U$ or $\ell\in W$;
            \item there are $\ell,\ell'>0$ such that $\ell\in U$, $\ell'\in W$;
        \end{itemize}
        there is an $x\in \widehat{L}$ containing no element of $U$ or $W$ is $\boldsymbol{\Pi}^1_2$ and is equivalent to the assertion that $\widehat{L}$ is disconnected in any model in which $(2^{|L|})^V$ is countable.
        The implication therefore follows from Shoenfield absoluteness. 
        \end{proof}
        \begin{remark}
            There are connected spaces - in fact locally compact connected spaces - whose locale of open sets is not $p$-connected and therefore do not interpret to connected spaces in all forcing extensions; see, for instance, \cite[XIII.4]{FrmLoc}. 
            However, any connected, locally connected locale is $p$-connected; see \cite{prod_conn}.
        \end{remark}
        Absoluteness again allows us to generalize Proposition \ref{conn_equiv} beyond forcing extensions. 
        Being connected does seem to need an additional quantifier, however, so we need the additional hypothesis that $M$ is uncountable to use Shoenfield absoluteness. 
        \begin{corollary}
            Suppose $M$ is a transitive model of set theory and $M\models L$ is a locale which is not $p$-connected and $\mathcal{P}(L)\cap M$ is countable. 
            Then $\widehat{L}$ is disconnected. 
            Conversely, if $M$ is uncountable and transitive, $M\models L$ is a $p$-connected locale, and $\mathcal{P}(L)\cap M$ is countable then $\widehat{L}$ is connected. 
        \end{corollary}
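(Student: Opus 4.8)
The plan is to reduce both directions to Proposition \ref{conn_equiv} applied with $M$ in the role of the ground model, and then to transport the resulting statements about forcing extensions of $M$ back to $V$ by absoluteness, exactly as in Corollaries \ref{Most_T1} and \ref{most_haus} but one projective level higher. Since $\mathcal{P}(L)\cap M$ is countable in $V$, I would first fix a real $c\in V$ coding $L$ together with $\mathcal{P}(L)\cap M$. As computed in the proof of Proposition \ref{conn_equiv}, the predicate ``$x$ is a point of $\widehat{L}$'' is then arithmetic in $c$, ``$\widehat{L}$ is disconnected'' is expressed by a $\boldsymbol{\Sigma}^1_2(c)$ formula $\psi$, and ``$\widehat{L}$ is connected'' by its negation, a $\boldsymbol{\Pi}^1_2(c)$ formula. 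A point worth stressing is that $\psi$ faithfully expresses (dis)connectedness of $\widehat{L}$ only in models where $(2^{|L|})^M$ is countable, since only then is the quantifier over $\mathcal{P}(L)\cap M$ a number quantifier; this holds in $V$ and in every $V[G]$ by hypothesis and will be arranged in the relevant extension of $M$. Note also that $\lambda:=(2^{|L|})^M$ is a countable ordinal of $V$ and $P:=\operatorname{Coll}(\omega,\lambda)^M$ consists of finite partial functions into $\lambda$, hence is countable in $V$; so forcing with $P$ over $V$ preserves $\omega_1$. I would fix a $V$-generic $G\subseteq P$ and pass to $V[G]$, inside which $G$ is simultaneously $M$-generic, $M[G]\subseteq V[G]$ are transitive, and $M[G]\models ``\lambda$ is countable.''

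For the disconnectedness direction, Proposition \ref{conn_equiv} relativized to $M$ shows that $M\models L$ is not $p$-connected implies $\widehat{L}^{M[G]}$ is disconnected, i.e. $M[G]\models\psi$. Since $\psi$ is $\boldsymbol{\Sigma}^1_2(c)$, the matrix is $\boldsymbol{\Pi}^1_1(c)$ and hence absolute between the transitive models $M[G]\subseteq V[G]$ by Mostowski absoluteness, so the witnesses persist and $V[G]\models\psi$. Finally, $c\in V$, so Shoenfield absoluteness between $V$ and its generic extension $V[G]$ yields $V\models\psi$; that is, $\widehat{L}$ is disconnected. This uses only the (trivial) upward absoluteness of $\boldsymbol{\Sigma}^1_2$ across the containment $M[G]\subseteq V[G]$, so no hypothesis on the size of $M$ is required here.

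For the converse I would assume $M$ is uncountable and $M\models L$ is $p$-connected. The only new ingredient is an upward transfer of the $\boldsymbol{\Pi}^1_2$ statement $\neg\psi$ from $M[G]$ to $V[G]$, and this is precisely where uncountability of $M$ is used. First I would record the sub-lemma that an uncountable transitive model of ZFC has ordinal height at least $\omega_1^V$: if $\operatorname{Ord}\cap M$ were a countable ordinal, then every element of $M$ would be countable in $V$ (each is bijective in $M$ to an $M$-cardinal below that height) and $M$ would be a countable union of such sets, hence countable, a contradiction. Thus $\omega_1^V\subseteq M\subseteq M[G]$, and since $P$ is countable in $V$ we have $\omega_1^{V[G]}=\omega_1^V\subseteq M[G]$. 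Because $M[G]\subseteq V[G]$ are transitive and $M[G]$ contains every countable-in-$V[G]$ ordinal, Shoenfield's theorem applies two-way to $\boldsymbol{\Pi}^1_2(c)$ statements between $M[G]$ and $V[G]$. Proposition \ref{conn_equiv} over $M$ gives $M[G]\models\neg\psi$; hence $V[G]\models\neg\psi$, and a second application of Shoenfield between $V$ and $V[G]$ gives $V\models\neg\psi$, so $\widehat{L}$ is connected.

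The main obstacle is getting the absoluteness bookkeeping exactly right rather than any hard construction. The delicate points are: confirming that $\psi$ has complexity $\boldsymbol{\Sigma}^1_2$ (inherited from Proposition \ref{conn_equiv}) and genuinely expresses disconnectedness in each of $M[G]$, $V[G]$, and $V$, which forces the $(2^{|L|})^M$-countable requirement into each relevant model; recognizing that the disconnectedness direction needs only the cost-free upward $\boldsymbol{\Sigma}^1_2$ step, whereas the connectedness direction needs the genuinely harder upward $\boldsymbol{\Pi}^1_2$ step and therefore the hypothesis that $M$ is uncountable; and verifying that this hypothesis delivers exactly the ordinal-containment $\omega_1^{V[G]}\subseteq M[G]$ that the two-model form of Shoenfield absoluteness demands. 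Once these are pinned down, the chain $M[G]\rightsquigarrow V[G]\rightsquigarrow V$ closes both directions.
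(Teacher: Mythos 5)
Your proof is correct and follows exactly the route the paper intends (the paper states this corollary without an explicit proof, but your chain --- Proposition \ref{conn_equiv} applied over $M$, Mostowski absoluteness to push the $\boldsymbol{\Sigma}^1_2$ disconnectedness statement from $M[G]$ up to $V[G]$, two-model Shoenfield absoluteness for the $\boldsymbol{\Pi}^1_2$ connectedness statement using that uncountability of $M$ gives $\omega_1^{V}\subseteq M$, and then Shoenfield between $V$ and $V[G]$ to descend --- is precisely what the surrounding remark and the proofs of Corollaries \ref{Most_T1} and \ref{most_haus} prescribe). One cosmetic repair: your fixed code $c\in V$ need not belong to $M[G]$, so the assertion ``$M[G]\models\psi(c)$'' should be run with some code $c'\in M[G]$ (which exists since $\lambda$ is collapsed there) and then converted to $\psi(c)$ inside $V[G]$ by code-invariance of $\psi$, or one can simply take the countable sets $L$ and $\mathcal{P}(L)\cap M$ themselves as the parameters, as the paper does.
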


        \subsection{Open maps}
        We now analyze when a map of frames interprets to an open map. Recall that a continuous function $f\colon X\to Y$ is \emph{open} if the image of every open set is open. 
        The localic analogue is the following:
        \begin{definition}
            A frame map $f\colon L\to M$ is \emph{open} if it is a complete Heyting homomorphism; that is, $f$ preserves arbitrary meets and Heyting implication. 
            Equivalently, $f$ has a left adjoint $f_!$ that satisfies the Frobenius identity
            \[f_!(a\wedge f(b))=    f_!(a)\wedge b.\]
        \end{definition}
        Open frame maps are viewed in the localic world as a weakening of the spatial notion that coincides if the codomain is $T_1$. 
        It is unclear whether open maps of locales with a sufficiently separated codomain will always interpret to open maps; we do, however, have one implication. 
        \begin{proposition} \label{open_coll}
            Suppose $f\colon L\to L'$ is a frame map. 
            If $\widehat{f}$ is open after forcing with $\operatorname{Coll}(\omega,|L|+|L'|)$ then $f$ is open. 
        \end{proposition}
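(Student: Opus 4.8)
The plan is to use the openness of $\widehat f$ in the extension to manufacture, back in $V$, a left adjoint $f_!$ to $f$ satisfying the Frobenius identity, which is exactly the definition of $f$ being open. Throughout I recall that for a frame map $f\colon L\to L'$ the induced map $\widehat f\colon\widehat{L'}\to\widehat L$ sends a point $x$ to $\{\ell\in L\mid f(\ell)\in x\}$, so that $\widehat f^{-1}\widehat\ell=\widehat{f(\ell)}$ for every $\ell\in L$. Since the sets $\widehat\ell$ form a basis for $\widehat L$ that is closed under finite intersection (as $\widehat\ell\cap\widehat{\ell''}=\widehat{\ell\wedge\ell''}$), the map $\widehat f$ is open exactly when each image $\widehat f[\widehat{\ell'}]$, for $\ell'\in L'$, is open.

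First I would pass to $V[G]$ with $G$ generic for $\mathbb P=\operatorname{Coll}(\omega,|L|+|L'|)$ and fix $\ell'\in L'$. For each $\ell\in L$ the assertion $\widehat\ell\subseteq\widehat f[\widehat{\ell'}]$ refers only to the canonical $\mathbb P$-names built from the ground-model objects $L,L',f,\ell,\ell'$; since $\mathbb P$ is weakly homogeneous, this assertion is decided by the empty condition, so the set $S_{\ell'}:=\{\ell\in L\mid \Vdash_{\mathbb P}\widehat\ell\subseteq\widehat f[\widehat{\ell'}]\}$ lies in $V$ and, in $V[G]$, consists of exactly those $\ell$ with $\widehat\ell\subseteq\widehat f[\widehat{\ell'}]$. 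I then define $f_!(\ell'):=\bigvee S_{\ell'}\in L$. Because $S_{\ell'}\in V$, the defining property of $\widehat L$ gives $\widehat{f_!(\ell')}=\bigcup_{\ell\in S_{\ell'}}\widehat\ell$, and because $\widehat f[\widehat{\ell'}]$ is open it is the union of the basic opens it contains, namely exactly the $\widehat\ell$ with $\ell\in S_{\ell'}$. Hence $\widehat{f_!(\ell')}=\widehat f[\widehat{\ell'}]$ in $V[G]$.

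It remains to check that this $f_!$ is a left adjoint satisfying Frobenius, and here all the work happens inside $V[G]$ and transfers to $V$ through item (\ref{sym}) of Proposition \ref{l_struct}. For the adjunction I would show $f_!(\ell')\leq\ell\iff\ell'\leq f(\ell)$: if $\ell'\leq f(\ell)$ then $\widehat{\ell'}\subseteq\widehat{f(\ell)}=\widehat f^{-1}\widehat\ell$, so $\widehat{f_!(\ell')}=\widehat f[\widehat{\ell'}]\subseteq\widehat\ell$ and $f_!(\ell')\leq\ell$ by item (\ref{sym}); conversely $f_!(\ell')\leq\ell$ gives $\widehat f[\widehat{\ell'}]\subseteq\widehat\ell$, whence $\widehat{\ell'}\subseteq\widehat f^{-1}\widehat f[\widehat{\ell'}]\subseteq\widehat f^{-1}\widehat\ell=\widehat{f(\ell)}$ and $\ell'\leq f(\ell)$. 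For Frobenius, using $\widehat{f(b)}=\widehat f^{-1}\widehat b$ together with the elementary image identity $\widehat f[A\cap\widehat f^{-1}(B)]=\widehat f[A]\cap B$, one computes
\[\widehat{f_!(a\wedge f(b))}=\widehat f[\widehat a\cap\widehat f^{-1}(\widehat b)]=\widehat f[\widehat a]\cap\widehat b=\widehat{f_!(a)}\cap\widehat b=\widehat{f_!(a)\wedge b},\]
so $f_!(a\wedge f(b))=f_!(a)\wedge b$ by item (\ref{sym}). Thus $f$ has a left adjoint satisfying the Frobenius identity and is therefore open.

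The conceptual content sits entirely in the second paragraph: the homogeneity argument is what lets a set defined by a property evaluated in $V[G]$ descend to $V$, and the fact that $S_{\ell'}\in V$ is precisely what makes the ground-model join $\bigvee S_{\ell'}$ interpret as the union $\bigcup_{\ell\in S_{\ell'}}\widehat\ell$ rather than as something strictly larger. I expect this descent — and the accompanying need to confirm that $\widehat f[\widehat{\ell'}]$ is genuinely captured by ground-model data — to be the only real obstacle; once $\widehat{f_!(\ell')}=\widehat f[\widehat{\ell'}]$ is in hand, the verification of the adjunction and of Frobenius is a formal manipulation of images and preimages combined with the order-faithfulness of interpretation recorded in item (\ref{sym}) of Proposition \ref{l_struct}.
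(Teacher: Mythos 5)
Your proof is correct, but it takes a genuinely different route from the paper's. The paper's argument is a short transfer-of-structure argument: it invokes (unproved) ``variations'' of Proposition \ref{l_struct} to assert that after the collapse, $L$ and $L'$ embed faithfully into $\mathcal{O}(\widehat{L})$ and $\mathcal{O}(\widehat{L'})$ with all ground-model meets, joins, and the Heyting implication preserved; since $\mathcal{O}\widehat{f}$ restricts to $f$ on these copies and openness of $\widehat{f}$ makes $\widehat{f}^{-1}$ a complete Heyting homomorphism, the restriction $f$ preserves ground-model meets and Heyting implication, which is the first definition of an open frame map. You instead use the equivalent left-adjoint/Frobenius characterization and construct $f_!$ explicitly in $V$: the weak-homogeneity descent (the statement $\widehat{\ell}\subseteq\widehat{f}[\widehat{\ell'}]$ has only check-name parameters, so it is decided by the empty condition, and hence $S_{\ell'}\in V$) is your key ingredient, and it is exactly what makes the ground-model join $\bigvee S_{\ell'}$ interpret to the open image $\widehat{f}[\widehat{\ell'}]$; the verifications of the adjunction and of Frobenius then need only item (\ref{sym}) of Proposition \ref{l_struct}, the fact that interpretation turns ground-model joins and binary meets into unions and intersections, and the set identity $g[A\cap g^{-1}(B)]=g[A]\cap B$, all of which you apply correctly. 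What each approach buys: the paper's proof is shorter but leans on extensions of Proposition \ref{l_struct} that are not actually proved there (preservation of ground-model arbitrary meets and of Heyting implication under $\ell\mapsto\widehat{\ell}$; only the Heyting negation case, item (\ref{comp}), appears), whereas yours is self-contained modulo weak homogeneity of the Levy collapse, a fact the paper itself uses in Section \ref{glob_section}, and it makes transparent why the hypothesis at the single collapse extension suffices. One cosmetic slip: when reducing openness of $\widehat{f}$ to openness of images of basic sets, the relevant basis is that of the domain $\widehat{L'}$ (the sets $\widehat{\ell'}$, $\ell'\in L'$), not of $\widehat{L}$ as your preamble says; the reduction you actually use is the correct one, since images commute with unions.
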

        \begin{proof}
        First note that by variations of Proposition \ref{l_struct}, after forcing with $\operatorname{Coll}(\omega,|L|+|L'|)$, $L$ and $L'$ embed faithfully as Heyting subalgebras of $\mathcal{O}(\widehat{L})$ and $\mathcal{O}(\widehat{M})$ such that all meets and joins that exist in the ground model coincide with their ground model versions. 
            Moreover, $\mathcal{O}\widehat{f}\colon \mathcal{O}(\widehat{L'})\to\mathcal{O}(\widehat{L})$ restricts to $f$. 
            If $\widehat{f}$ is open then $\widehat{f}^{-1}$ is a complete Heyting homomorphism so its restriction to $L$ preserves all meets in $L$ and the Heyting implication. 
            That is, $f$ is an open map of frames. 
            \end{proof}
        
        With the additional hypothesis of surjectivity, however, we obtain the following:
        \begin{proposition} \label{open_emb_prop}
            Suppose $f\colon L\to L'$ is a frame map and $L$ is $T_U$. The following are equivalent: 
            \begin{enumerate}
                \item $f$ is open and surjective; \label{open_sur}
                \item in all forcing extensions, $\widehat{f}$ is an open embedding; \label{open_emb}
                
                \item $\widehat{f}$ is an open embedding after forcing with $\operatorname{Coll}(\omega,|L|+|L'|)$. \label{open_collapse}
            \end{enumerate}
        \end{proposition}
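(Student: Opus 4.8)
The plan is to prove the cycle $(\ref{open_sur})\Rightarrow(\ref{open_emb})\Rightarrow(\ref{open_collapse})\Rightarrow(\ref{open_sur})$. The implication $(\ref{open_emb})\Rightarrow(\ref{open_collapse})$ is immediate, since $\operatorname{Coll}(\omega,|L|+|L'|)$ is one particular forcing extension. For $(\ref{open_collapse})\Rightarrow(\ref{open_sur})$ I would argue as follows: if $\widehat f$ is an open embedding after the collapse then in particular it is an embedding, so by item \ref{emb} of Proposition \ref{l_struct} the frame map $f$ is surjective; and an open embedding is in particular an open map into its codomain $\widehat L$, so Proposition \ref{open_coll} gives that $f$ is open. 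The hypothesis that $L$ is $T_U$ enters via Proposition \ref{T1_equiv}, which guarantees that $\widehat L$ is $T_1$ in every forcing extension; this is the separation hypothesis under which openness of the frame map $f$ and openness of the continuous map $\widehat f$ into $\widehat L$ correspond, as in the discussion preceding Proposition \ref{open_coll}.

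The substance is $(\ref{open_sur})\Rightarrow(\ref{open_emb})$, which I would prove by an explicit computation of images of basic open sets, working in an arbitrary forcing extension $V[G]$ and recalling that the points of $\widehat L$ there are exactly the filters completely prime over $V$. Since $f$ is open it has a left adjoint $f_!\colon L'\to L$ satisfying the Frobenius identity, and since $f$ is moreover surjective one checks that $f\circ f_!=\operatorname{id}_{L'}$ and, taking the second argument to be $1$ in Frobenius, that $f_!(f(x))=u\wedge x$ where $u:=f_!(1_{L'})$. First, surjectivity of $f$ gives that $\widehat f$ is injective, hence by item \ref{emb} an embedding in all forcing extensions: given distinct completely prime $\mathcal G_1\neq\mathcal G_2$ on $L'$ and $y$ in their symmetric difference, any preimage $x$ of $y$ lies in the symmetric difference of $f^{-1}\mathcal G_1$ and $f^{-1}\mathcal G_2$. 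It then remains to show $\widehat f$ is open as a map into $\widehat L$, for which I claim $\widehat f[\widehat y]=\widehat{f_!(y)}$ for every $y\in L'$; taking $y=1_{L'}$ this simultaneously identifies the image $\widehat f[\widehat{L'}]=\widehat u$ as a basic open set. Since a continuous injection that is open into its codomain is an open embedding, this finishes the argument.

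The forward inclusion $\widehat f[\widehat y]\subseteq\widehat{f_!(y)}$ is immediate from $f(f_!(y))=y$, so that $f_!(y)\in f^{-1}\mathcal G$ whenever $y\in\mathcal G$. The reverse inclusion is the main obstacle: given a filter $\mathcal F$ completely prime over $V$ with $f_!(y)\in\mathcal F$ (hence $u\in\mathcal F$ as $f_!(y)\leq u$), I must produce a filter $\mathcal G$ completely prime over $V$ on $L'$ with $y\in\mathcal G$ and $f^{-1}\mathcal G=\mathcal F$. I would take $\mathcal G$ to be the upward closure of $f[\mathcal F]$, that is $\{y'\mid\exists x\in\mathcal F,\ f(x)\leq y'\}$. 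Checking $f^{-1}\mathcal G=\mathcal F$ uses $f_!(f(x))=u\wedge x$ together with $u\in\mathcal F$ to upgrade an inequality $f(x')\leq f(x)$ with $x'\in\mathcal F$ to the conclusion $x\in\mathcal F$; checking that $\mathcal G$ is a filter uses that $f$ preserves finite meets; and $y\in\mathcal G$ is witnessed by $x=f_!(y)\in\mathcal F$.

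The delicate point, and the step I expect to require the most care, is that $\mathcal G$ is completely prime \emph{over} $V$. For a ground-model family $\{y_i\}$ with $\bigvee_i y_i\in\mathcal G$, I would push through $f_!$, which lies in $V$ and preserves the join $\bigvee_i y_i$ since left adjoints preserve all joins, to the ground-model family $\{f_!(y_i)\}$; from $\bigvee_i y_i\in\mathcal G$ one gets $x\in\mathcal F$ with $f(x)\leq\bigvee_i y_i$, whence $u\wedge x=f_!(f(x))\leq\bigvee_i f_!(y_i)$ lands $\bigvee_i f_!(y_i)$ in $\mathcal F$, and complete primeness of $\mathcal F$ over $V$ yields some $f_!(y_i)\in\mathcal F$; applying $f\circ f_!=\operatorname{id}$ returns $y_i\in\mathcal G$. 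Because this argument is purely algebraic and only ever invokes complete primeness against families lying in $V$, it runs uniformly in every forcing extension, delivering $(\ref{open_emb})$ in full rather than merely the collapse case $(\ref{open_collapse})$.
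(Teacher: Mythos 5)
Your proof is correct, and its skeleton matches the paper's: the same cycle of implications, the same appeal to item \ref{emb} of Proposition \ref{l_struct} and to Proposition \ref{open_coll} for $(\ref{open_collapse})\Rightarrow(\ref{open_sur})$, and the same key identity $\widehat f[\widehat y]=\widehat{f_!(y)}$ driving $(\ref{open_sur})\Rightarrow(\ref{open_emb})$. The genuine divergence is in how the reverse inclusion $\widehat{f_!(y)}\subseteq\widehat f[\widehat y]$ is finished. The paper takes the candidate preimage point to be $f_!^{-1}(\mathcal F)$ --- which, given $u=f_!(1)\in\mathcal F$, is the same filter as your $\uparrow\! f[\mathcal F]$ --- proves only the one inclusion $\widehat f(\mathcal G)\subseteq\mathcal F$, and then invokes the $T_U$ hypothesis: by Proposition \ref{T1_equiv} the space $\widehat L$ is $T_1$, so a point lying in the closure of another point must equal it. You instead prove $f^{-1}\mathcal G=\mathcal F$ outright by pure algebra, using the Frobenius consequence $f_!f(x)=u\wedge x$ together with $u\in\mathcal F$, and your verification (filter, membership of $y$, complete primeness over $V$ via pushing ground-model families through $f_!$) is sound and runs in every forcing extension. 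The payoff is notable: your argument never actually needs $T_U$. Indeed, contrary to the remark in your first paragraph, Proposition \ref{open_coll} carries no separation hypothesis, so $T_U$ does not enter $(\ref{open_collapse})\Rightarrow(\ref{open_sur})$ either; your proof therefore establishes the equivalence for an arbitrary locale $L$, slightly strengthening the proposition, whereas the paper's $T_1$ shortcut makes the hypothesis look necessary when it is not. (One immaterial slip: in the paper's form of Frobenius, $f_!(a\wedge f(b))=f_!(a)\wedge b$, the identity $f_!f(b)=u\wedge b$ comes from setting the \emph{first} argument $a=1$, not the second.)
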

        \begin{proof}
            $(\ref{open_collapse})\Rightarrow(\ref{open_sur})$ follows immediately from item \ref{emb} of Proposition \ref{l_struct} and Proposition \ref{open_coll}. 
            For $(\ref{open_sur})\Rightarrow(\ref{open_emb})$, first note that $\widehat{f}$ is an embedding by item \ref{emb} of Proposition \ref{l_struct}. 
            To see that $\widehat{f}$ is open, we show that the image of $\widehat{\ell}$ under $\widehat{f}$ is $\widehat{f_!(\ell)}$. 
            First, since $f$ is surjective, $ff_!(\ell)=\ell$ so $\widehat{f}^{-1}\widehat{f_!(U)}=\widehat{ff_!(\ell)}=\widehat{\ell}$; in particular, $\widehat{f}[\widehat{\ell}]\subseteq \widehat{f_!(\ell)}$. 
            For the reverse inclusion, suppose $x\in \widehat{f_!(U)}$. 
            Let $y=\{a\in L'\mid f_!(a)\in x\}$; we show that $y\in\widehat{\ell}$ and $\widehat{f}(y)=x$. 
            First, $f_!(\ell)\in x$ by hypothesis and $f_!(0)=0\not\in x$. 
            Note that for any $a,b\in L'$, 
            \[\begin{aligned}
                f_!(a)\wedge f_!(b)&=f_!(a\wedge ff_!(b))\\
                &=f_!(a\wedge b),
            \end{aligned}\]
            where we use surjectivity of $f$ again in the second equality. 
            In particular, since $x$ is a filter on $L$, $y$ is a filter on $L'$. 
            Moreover, since $f_!$ preserves arbitrary joins, $y\in \widehat{L'}$. 
            Finally, $\widehat{f}(y)=\{a\in L\mid f_!f(a)\in x\}$; since $f_!f(a)\leq a$, $\widehat{f}(y)\subseteq x$. 
            That is to say, $x\in\overline{\widehat{f}(y)}$
            Since $L$ is $T_U$, $\widehat{L}$ is $T_1$ by Proposition \ref{T1_equiv} so $x=\widehat{f}(y)$. 
            
        \end{proof}
        It seems plausible but tricky that the extra hypothesis of surjectivity can be removed. 
        The tricky point seems to be to replace $f_!$ with a smaller function that now preserves binary meets while still preserving joins. 
        
        In light of Proposition \ref{open_emb_prop}, the next proposition essentially means that interpretation commutes with directed unions. 
        \begin{proposition} \label{union_prop}
            Suppose $D$ is a directed poset and $(L^{i},p^{ij})_{ij\in D,i\leq j}$ is a directed system of locales with each $p^{ij}$ open and surjective when viewed as a map of frames. 
            Let $L$ be the colimit of the $L_i$. 
            Then $\widehat{L}$ is homeomorphic to the colimit of interpreted diagram $(\widehat{L^i},\widehat{p^{ij}})$. 
        \end{proposition}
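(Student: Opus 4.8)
The plan is to use a concrete model of the colimit and then check directly that $\widehat{L}$ realizes the directed colimit of the $\widehat{L^i}$ in $\mathrm{Top}$. Since a colimit of locales is a limit of frames, I would take $L$ to be the inverse limit of the frames $L^i$ along the surjective open bonding maps $p^{ij}\colon L^j\to L^i$, with projection frame maps $q^i\colon L\to L^i$; these are the coprojections of $L=\mathrm{colim}\,L^i$ viewed as frame maps. Using the open--sublocale description furnished by openness and surjectivity of the $p^{ij}$, each $q^i$ is again open and surjective, so by Proposition~\ref{open_emb_prop} the interpreted maps $\widehat{q^i}\colon\widehat{L^i}\to\widehat{L}$, and likewise the transition maps $\widehat{p^{ij}}$, are open embeddings; functoriality gives $\widehat{q^j}\circ\widehat{p^{ij}}=\widehat{q^i}$, so the universal property of the colimit in $\mathrm{Top}$ produces a canonical continuous map $\phi\colon\varinjlim_i\widehat{L^i}\to\widehat{L}$. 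As in Corollary~\ref{Most_T1}, the point-set facts I need are $\boldsymbol\Pi^1_1$ in the parameters of Remark~\ref{Borel}, so it suffices to verify everything after forcing with a sufficiently large collapse (making $2^{|L|}$ and all $2^{|L^i|}$ countable), where Proposition~\ref{l_struct} is available, and then transfer by Mostowski absoluteness.

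The crux is surjectivity of $\phi$, i.e.\ that the open images $\widehat{u_i}$, with $u_i:=q^i_!(1)$, cover $\widehat{L}$; equivalently $\bigvee_i u_i=1_L$. I would prove this purely localically from the universal property. Put $w=\bigvee_i u_i$ and consider the open sublocale $\mathord\downarrow w\hookrightarrow L$. Since $w\ge u_i=q^i_!(1)$ and the adjunction $q^i_!\dashv q^i$ gives $a\le q^i(q^i_!(a))$, we get $1\le q^i(u_i)\le q^i(w)$, so $q^i(w)=1$ and hence $q^i(v\wedge w)=q^i(v)\wedge q^i(w)=q^i(v)$ for all $v$; this says each coprojection factors through $\mathord\downarrow w$ compatibly in $i$. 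The universal property then forces the inclusion $\mathord\downarrow w\hookrightarrow L$ to split the identity on $L$, and as an open (hence monic) sublocale inclusion it must be an isomorphism, whence $w=1$. Given $\bigvee_i u_i=1$, complete primeness over the ground model shows every point of $\widehat{L}$ meets some $u_i$, i.e.\ lies in $\widehat{u_i}=\mathrm{im}(\widehat{q^i})$; thus $\phi$ is onto.

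For injectivity I would use directedness: if two points of the colimit share an image, I represent both in a common $\widehat{L^k}$ via the transition embeddings and use that $\widehat{q^k}$ is injective (item~\ref{emb} of Proposition~\ref{l_struct}) to conclude they already agree in $\widehat{L^k}$, hence in the colimit. Finally $\phi$ is open: for $A$ open in the colimit, $\phi(A)\cap\widehat{u_i}=\widehat{q^i}(\iota_i^{-1}A)$ is open in the open subspace $\widehat{u_i}\subseteq\widehat{L}$, and since the $\widehat{u_i}$ are open and cover, $\phi(A)=\bigcup_i(\phi(A)\cap\widehat{u_i})$ is open; a continuous open bijection is a homeomorphism. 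The main obstacle I anticipate is precisely the covering step $\bigvee_i u_i=1$ together with guaranteeing that the interpreted maps have \emph{open} image rather than merely dense image in $\widehat{u_i}$: this is where openness of the bonding maps and the separation hypothesis needed for Proposition~\ref{open_emb_prop} (namely that $L$, and so each $L^i$, is $T_U$) are essential, since without open images the continuous bijection $\phi$ need not be open and the colimit topology would fail to match.
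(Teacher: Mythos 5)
Your overall strategy (construct the canonical continuous map $\phi\colon\varinjlim_i\widehat{L^i}\to\widehat{L}$ and show it is an open bijection) is workable, and your purely categorical proof that $\bigvee_i q^i_!(1)=1$ --- factoring the whole cocone through the open sublocale $\downarrow w\hookrightarrow L$ and using that a monic split epimorphism is an isomorphism --- is correct and is a genuinely different route to the covering fact than the paper takes. But there is a real gap: you obtain openness of the interpreted maps $\widehat{q^i}$, $\widehat{p^{ij}}$ by invoking Proposition \ref{open_emb_prop}, and that proposition carries the hypothesis that the relevant frame is $T_U$. Proposition \ref{union_prop} has no separation hypothesis at all, so as written your argument proves a strictly weaker statement; worse, your closing sentence asserts that the $T_U$ hypothesis is \emph{essential} to the result, which is false --- the paper proves the proposition for arbitrary directed systems with open surjective bonding maps, with no separation assumption and, in fact, with no forcing or absoluteness transfer.

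The reason the hypothesis is dispensable is that for a \emph{surjective} open frame map $f\colon L\to L'$ the appeal to $T_1$-ness in the proof of Proposition \ref{open_emb_prop} can be replaced by the Frobenius identity: surjectivity gives $ff_!=\operatorname{id}$, and Frobenius with $a=1$ gives $f_!f(b)=b\wedge f_!(1)$ for all $b\in L$. Hence if $x\in\widehat{L}$ contains $f_!(1)$, then $y=\{a\in L'\mid f_!(a)\in x\}$ is a point of $\widehat{L'}$ and
\[\widehat{f}(y)=\{b\in L\mid f_!f(b)\in x\}=\{b\in L\mid b\wedge f_!(1)\in x\}=x,\]
exactly --- not merely $\widehat{f}(y)\subseteq x$, which is the step where the paper's proof of Proposition \ref{open_emb_prop} falls back on $T_1$-ness of the codomain. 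With this observation your argument goes through verbatim and without separation hypotheses: $\operatorname{im}(\widehat{q^i})=\widehat{q^i_!(1)}$ is open, your covering lemma plus complete primeness over the ground model gives surjectivity of $\phi$, and injectivity and openness follow as you describe (injectivity of $\widehat{q^k}$ for surjective $q^k$ needs no collapse forcing, only the choice of a preimage of a separating element). Note also that your claim that the projections $q^i$ are themselves open and surjective is true but not free; it is exactly the content of the paper's Claim about the minimal threads $x_\ell=p^{jk}p^{ik}_!(\ell)$. For comparison, the paper works entirely inside this thread model: it shows every element of the colimit frame is a join of minimal threads (your covering fact is the special case $y=1$), deduces that the sets $\widehat{x_\ell}$ form a basis of $\widehat{L}$, and writes down the inverse of the canonical map explicitly, thereby sidestepping both Proposition \ref{open_emb_prop} and any absoluteness argument.
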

        \begin{proof}
            We recall by \cite[Section IV.3.1]{FrmLoc} that the colimit of locales may be explicitly given by $\{x\in\prod_iL_i\mid \forall i\leq j (p^{ij}(x_j)=x_i)\}$ with $p^{ij}$ here denoting the map of frames. 
            For each $i,j\in D$ and $\ell\in L_i$, let $x_\ell(j)=p^{jk}p^{ik}_!(\ell)$ for some $k\geq i,j$. 
            The following claim is essentially the same as \cite[Lemma 6.5.1]{FrmLoc} and is tedious but straightforward:
            \begin{claim}
                $x_\ell$ is a well-defined element of $\operatorname{colim}_jL^j$. 
                Moreover, for each $y\in \operatorname{colim}_jL_j$, $y=\bigvee_{j\in D}x_{y(j)}$. 
            \end{claim}
            \begin{proof}
                Concretely, $p^{jk}_!(\ell')=\bigwedge\{\ell''\mid p_{jk}(\ell'')\geq\ell'\}$. 
                Since each $p^{jk}$ is surjective and preserves arbitrary meets, $p^{jk}p^{jk}_!(\ell')=\ell'$ for each $\ell'\in L^j$. 
                Directedness of $D$ then easily gives well-definedness of $x_\ell(j)$ for each $j$. 
                $x_\ell$ being an element of $\operatorname{colim}_jL^j$ also follows easily: if $j\leq j'$ and $k\geq i,j,j'$ then 
                \[p^{jj'}(x_\ell(j'))=p^{jj'}p^{j'k}p^{ik}_!(\ell)=p^{jk}p^{ik}_!(\ell)=x_\ell(j).\]
                Finally, we note that $x_\ell$ is in fact the smallest $z\in\operatorname{colim}_jL_j$ with $z(j)=\ell$. 
                Since joins in $\operatorname{colim}_jL_j$ are computed pointwise, we obtain the moreover part of the claim.
            \end{proof}
            We now define $\varphi\colon\widehat{\operatorname{colim}_iL_i}\to \operatorname{colim}_i\widehat{L_i}$: given $z\in \widehat{\operatorname{colim}_iL_i}$, fix $i$ such that for some $\ell\in L_i$, $x_\ell\in L_i$. 
            Then let $\varphi(z)$ be the image of $\{\ell\in L_i\mid\exists x\in z(x(i)=\ell)\}$. 
            Since each $p^{jk}$ and $p^{jk}_!$ preserve arbitrary joins, we see that indeed $\{\ell\in L_i\mid\exists x\in z(x(i)=\ell)\}$ is a point of $\widehat{L_i}$. 
            
            For continuity of $\varphi$, note that if $\ell\in L_i$ then for $\pi\colon \coprod_{j\in D}\widehat{L^j}=\widehat{\prod_{j\in D}L^j}\to\operatorname{colim}_jL^j$, we have $\pi^{-1}\pi[\widehat{\ell}]=\widehat{x_\ell}$ so that the $\widehat{x_\ell}$ in fact form a basis. 
            An easy further computation shows that $\varphi$ is indeed the inverse to the canonical map from $\operatorname{colim}_i\widehat{L_i}$ to $\widehat{\operatorname{colim}_iL_i}$.
        \end{proof}
        \subsection{Localic groups}
        By a localic group, we mean a group object in the category of locales; that is, a locale $L$ with locale maps $m\colon L\oplus L\to L$, $e\colon 1\to L$, and $\operatorname{inv}\colon L\to L$ satisfying diagramatic versions of the group axioms. 
        Since interpretation preserves finite products and therefore group objects, we may view localic groups as an organized presentation of a group in all forcing extensions. 
        These include all locally compact topological groups but not, for example, $\mathbb{Q}$: after adding a Cohen real $r$, both $r$ and $\sqrt{2}-r$ are in $\widehat{{\Omega}(\mathbb{Q})}$ but their sum is not. 
        
        Allowing for localic groups in our interpretations allows for interesting examples such as the following:
        \begin{proposition} \label{hom}
            For any discrete abelain groups $A,B$, there is a localic group $\underline{\operatorname{hom}}(A,B)$ such that in all forcing extensions, $\widehat{\underline{\operatorname{hom}}(A,B)}$ is homeomorphic to the topological group $\hom(A,B)$ of homomorphisms from $A$ to $B$ in the compact-open topology.

        \end{proposition}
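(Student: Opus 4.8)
The plan is to realize $\underline{\operatorname{hom}}(A,B)$ as a closed sublocale of a localic product of copies of $\Omega B$. Since $B$ is discrete, $\Omega B=\mathcal{P}(B)$, and as $\{\{b\}\mid b\in B\}$ covers $B$ every completely prime filter over the ground model is principal; hence $\widehat{\Omega B}=B$ with the discrete topology, in every forcing extension. Setting $G=\bigoplus_{a\in A}\Omega B$, Corollary \ref{prod_preserve} gives $\widehat{G}=\prod_{a\in A}\widehat{\Omega B}=B^{A}$ with the product topology in all forcing extensions. For $a\in A$ and $b\in B$ write $[a=b]\in G$ for the element $\pi_a$ pulls back from $\{b\}$, so that $\widehat{[a=b]}=\{f\in B^{A}\mid f(a)=b\}$.

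Next I would carve out the homomorphisms. Let
\[
c=\bigvee\bigl\{[a=b_1]\wedge[a'=b_2]\wedge[a+a'=b_3]\ \bigm|\ a,a'\in A,\ b_3\neq b_1+b_2\bigr\}\in G,
\]
a join over an index set coded in the ground model, so that $\widehat{c}=B^{A}\setminus\hom(A,B)$. Put $H:=\{u\in G\mid u\ge c\}$, the closed sublocale of $G$, with frame surjection $q\colon G\to H$, $q(u)=u\vee c$ (this is the locale inclusion $\iota\colon H\to G$). By the standard closed-sublocale point correspondence, the image of $\widehat{q}$ is $\{\mathcal{F}\in\widehat{G}\mid c\notin\mathcal{F}\}=\hom(A,B)$, and since $q$ is surjective, item (\ref{emb}) of Proposition \ref{l_struct} (in its locale-implies-space direction, valid in all forcing extensions) shows $\widehat{q}$ is an embedding. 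Thus $\widehat{H}\cong\hom(A,B)$ as a subspace of $B^{A}$ in every forcing extension; as $A$ is discrete the compact subsets of $A$ are finite, so the subspace topology inherited from $B^{A}$ is exactly the compact-open topology. I define $\underline{\operatorname{hom}}(A,B):=H$.

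It remains to equip $H$ with a localic group structure interpreting to the pointwise group structure of $\hom(A,B)$. Here I would use that the main result of Section \ref{glob_section} makes $\widehat{\,\cdot\,}$ a full, faithful, finite-product-preserving embedding of $\operatorname{Loc}$ into globally defined spaces, with products computed pointwise; in particular $\widehat{H\oplus H}\cong\widehat{H}\times\widehat{H}$ and $\widehat{1}$ is a single point. In every forcing extension $\hom(A,B)$ is a topological group under pointwise addition, with continuous operations $+$, negation $-$, and the zero homomorphism. Because these are computed coordinatewise they are absolute between models and commute with the transition embeddings $f^{\widehat{H}}_{\mathbb{P},\dot H}$, so they constitute morphisms of globally defined spaces $\widehat{H}\times\widehat{H}\to\widehat{H}$, $\widehat{H}\to\widehat{H}$, and $\widehat{1}\to\widehat{H}$. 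By fullness and faithfulness these equal $\widehat{m}$, $\widehat{\operatorname{inv}}$, $\widehat{e}$ for unique locale maps $m\colon H\oplus H\to H$, $\operatorname{inv}\colon H\to H$, $e\colon 1\to H$. Each group-object axiom is an equality of locale maps assembled from $m,\operatorname{inv},e$ and the product structure; applying the product-preserving functor $\widehat{\,\cdot\,}$ turns it into the corresponding axiom for the topological group $\hom(A,B)$, which holds in every forcing extension, and faithfulness of $\widehat{\,\cdot\,}$ then returns commutativity in $\operatorname{Loc}$. Hence $H$ is a localic group with $\widehat{H}\cong\hom(A,B)$ as topological groups.

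The step I expect to be the main obstacle is the verification that pointwise $+$, $-$, and $0$ really are morphisms in the category of globally defined spaces, i.e.\ that they satisfy the compatibility conditions with the transition maps $f^{\widehat{H}}_{\mathbb{P},\dot H}$; this is exactly where one must invoke the absoluteness of coordinatewise addition across forcing extensions together with $\widehat{H\oplus H}=\widehat{H}\times\widehat{H}$ holding in every extension. A more hands-on alternative, which sidesteps the globally defined space machinery for the group laws, is to note that for discrete $B$ one has $\Omega(B\times B)\cong\Omega B\oplus\Omega B$ by Corollary \ref{prod_cor} (both interpret to the discrete group $B\times B$ in all extensions), so the discrete topological group $B$ transports along $\Omega$ to a localic group structure on $\Omega B$; then $G=\bigoplus_{a\in A}\Omega B$ is a localic group as a product in $\operatorname{Loc}$ of localic groups, and one shows directly that $m,\operatorname{inv},e$ restrict along the monomorphism $\iota\colon H\to G$. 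That restriction amounts to checking the relevant composites factor through $\iota$, which they visibly do after interpreting in a sufficiently large collapse extension (sums and negatives of homomorphisms are again homomorphisms); faithfulness then lifts the factorizations back to $\operatorname{Loc}$, recovering the same maps as above.
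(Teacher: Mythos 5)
Your proof is correct, but it takes a genuinely different route from the paper's. The paper argues categorically: fix a free resolution $0\to\bigoplus_I\ZZ\to\bigoplus_J\ZZ\to A\to 0$, dualize it to a homomorphism of localic groups $\bigoplus_J\Omega(B)\to\bigoplus_I\Omega(B)$ (the paper writes $\Omega(\ZZ)$, evidently a typo for $\Omega(B)$), and let $\underline{\operatorname{hom}}(A,B)$ be its kernel; since kernels are limits and interpretation preserves arbitrary limits (Proposition \ref{adj}, Corollary \ref{prod_preserve}), the interpretation is the kernel of $\prod_J B\to\prod_I B$, which is exactly $\hom(A,B)$, and the localic group structure comes for free because a kernel of a map of group objects is itself a group object. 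You instead exhibit $\underline{\operatorname{hom}}(A,B)$ concretely as the closed sublocale of $\bigoplus_{a\in A}\Omega B$ cut out by the homomorphism equations, compute its points by hand, and then manufacture the group operations via fullness and faithfulness of the embedding into globally defined spaces (Section \ref{glob_section}). Your route does more work for the group structure but buys an explicit presentation of the locale and avoids homological algebra; the paper's route is shorter but leaves to the reader that $\Omega B$ and its localic powers are group objects and that kernels of localic group maps exist and interpret as kernels. Two caveats on your write-up. First, the ``standard closed-sublocale point correspondence'' should be reverified for filters completely prime over $M$ rather than over $V$; it does go through, since $q[\cU]\in M$ whenever $\cU\in M$, and primality at the finite join $u\vee c$ is available because finite sets of ground-model elements lie in $M$. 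Second, in your hands-on alternative, faithfulness alone cannot ``lift the factorizations back to $\operatorname{Loc}$'': to factor $m\circ(\iota\oplus\iota)$ through the monomorphism $\iota$ you need either fullness (exactly as in your main argument) or the concrete criterion that a locale map into $G$ factors through the closed sublocale $\uparrow c$ if and only if its frame map sends $c$ to $0$, which can be checked here via item \ref{nonz} of Proposition \ref{l_struct}, as the interpretation of the relevant element is empty in a large collapse extension (sums and negatives of homomorphisms are again homomorphisms).
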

        \begin{proof}
            A free resolution
            \[0\to \bigoplus_I\ZZ\to\bigoplus_J\ZZ \to A\to 0\]
            induces a localic group homomorphism
             \[\bigoplus_J{\Omega}(\ZZ)\to\bigoplus_I{\Omega}(\ZZ).\]
             We take $\underline{\operatorname{hom}}(A,B)$ to be the kernel of this map. 
             Then since interpretation preserves all limits, the interpretation of this group is the kernel of the map of the induced map of topological abelain groups
             \[\prod_J\ZZ\to\prod_I\ZZ,\]
             which is $\hom(A,B)$. 
        \end{proof}

        \begin{remark}
            If $A=\bigoplus_{\omega_1}\mathbb{Z}$ and $B=\mathbb{Z}$ then $\underline{\operatorname{hom}}(A,B)$ is necessarily nonspatial. 
            Indeed, since $\hom(A,B)=\prod_{\omega_1}\ZZ$ with the product topology in all outer models and $\prod_{\omega_1}\ZZ$ does not interpret to the same product in the forcing extension after collapsing $\omega_1$. 
        \end{remark}
        It seems plausible that Proposition \ref{hom} generalizes to the setting of $A$ a locally compact abelian group and $B$ an abelian localic group by similar methods to \cite{fun_spaces}.

\end{document}